\documentclass[12pt,a4paper,oneside]{article}
\usepackage{amsmath,amstext,amssymb,amsthm}
\topmargin -1.7 cm \oddsidemargin -0.6 cm \textheight 245 mm
\textwidth 17 cm

\setcounter{equation}{0}

\newtheorem{thm}{Theorem}[section]
\newtheorem{lem}[thm]{Lemma}
\newtheorem{prop}[thm]{Proposition}
\newtheorem{cor}[thm]{Corollary}
\newtheorem{rem}[thm]{Remark}

\newcommand{\pref}[1]{(\ref{#1})}
\newcommand{\Rd}{\mathbb{R}^d}

\renewcommand{\H}{\mathbb{H}}

\def\kl{\left\{}
\def\kr{\right\}}

\title{Two-sided optimal bounds for Green functions of half-spaces  for relativistic $\alpha$-stable process}
\author{ T. Grzywny, M. Ryznar \\
 Institute of Mathematics and Computer Sciences,\\
  Wroc\l{}aw University of Technology, Poland}

\date{}
\begin{document}
\maketitle

\begin{abstract}
The purpose of this paper is to find optimal estimates for the
Green function of a half-space of {\it the relativistic
$\alpha$-stable process} with parameter $m$
 on $\Rd$ space. This process has an infinitesimal generator of the form
 $mI-(m^{2/\alpha}I-\Delta)^{\alpha/2},$ where
  $0<\alpha<2$, $m>0$, and reduces to the isotropic  $\alpha$-stable process for $m=0$ .
 Its potential theory for open bounded sets has been well developed throughout the recent years
 however almost nothing was known about the behaviour of the process on unbounded sets. The present
  paper is intended to fill this gap and we provide two-sided sharp estimates for the Green
   function for a half-space. As a byproduct we obtain some improvements
   of the estimates known for bounded sets.

   Our approach  combines the recent results obtained in \cite{ByczRyzMal}, where an
   explicit integral formula for the $m$-resolvent of a half-space was found, with estimates of
   the transition densities for the killed process on exiting a half-space.
 The main result states that the Green function is comparable with the  Green function for the
 Brownian motion if the points are away from the boundary of a half-space  and their distance  is greater than one. On the
  other hand for the remaining points  the Green function is somehow
  related  the  Green function for the isotropic  $\alpha$-stable process. For example, for $d\ge3$, it is comparable
  with the  Green function for the isotropic  $\alpha$-stable process, provided that the points are close enough.
\end{abstract}

\textbf{Keywords:} {stable relativistic process, Green function,
first exit time from a ball, tail function}\\

\textbf{Mathematics Subject Classifications (2000):} {60J45}

\section{Introduction}
In the paper we deal with some aspects of the potential theory of
the $\alpha$-{\it stable relativistic process}. That is a L{\'e}vy
process on $\Rd$ with a generator of the form
$$
 H^{m}_{\alpha}=mI-(m^{2/\alpha} I-\Delta)^{\alpha/2},\quad\ 0<\alpha <2, \ m>0.
 $$
For $m=0$ the operator above reduces to the generator of the
$\alpha$-{\it stable rotation invariant
 (isotropic) L{\'e}vy process} which potential theory was intensively studied in the literature.

 For $\alpha=1$ the operator
  $$
 H^{m}_{1}=mI-(m^2 I-\Delta)^{1/2}
 $$
 plays a very important role  in relativistic quantum mechanics since it corresponds to the kinetic energy of a relativistic particle with mass $m$.
  Generators of this kind  were investigated for example by E. Lieb \cite{L}
   in connection
  with the problem of stability of relativistic matter.
  An interested reader will find references on this subject e.g. in a recent paper
  \cite{KS}.

  Another reason that the operator $H^{m}_{\alpha}$ is an interesting object of study is its role in the theory of
the so-called {\it interpolation spaces of Bessel potentials} and
its application in harmonic analysis and partial differential
equations (see, e.g. \cite{S} and \cite{Ho}). This theory is based
on {\it Bessel potentials} defined as $J_{\alpha}=
(I-\Delta)^{-\alpha/2}$. As Stein pointed out in his monograph
\cite{S}, the Bessel potentials exhibit the same {\it local}
 behaviour (as $|x| \to 0$) as the Riesz potentials   but the {\it global} one (as $|x| \to \infty$)
 of  $J_{\alpha}$ is much more regular. In terms of the relativistic process the potential  $J_{\alpha}$
 is so-called $1$-resolvent kernel of the semigroup generated by $ H^{1}_{\alpha}$.

 In the paper we consider the process killed on exiting the  half-space $\H=\{x\in \Rd: x_d>0 \}$ and examine
 the behaviour of its  Green function $G_\H(x,y)$. Contrary to the stable case a closed formula for that Green
 function is not know and seems to be a very challenging target.  Recently in \cite{ByczRyzMal} an integral
 formula in terms of the Macdonald functions was found for $G^m_\H(x,y)$ - the $m$-resolvent kernel for $\H$.
 As proved in  \cite{ByczRyzMal}, for $d\ge 3$, the behaviour of the Green function is equivalent to  the behaviour
 the $m$-resolvent if $|x-y| \to 0$. Our main result establishes optimal bounds for the Green function of  $\H$.
 To our best knowledge it is the first result of that type when optimal estimates for unbounded set
 (different than the whole  $\Rd$) are derived.

 At this point let us mention that the potential theory for bounded sets has been well developed
  during recent years (see \cite{CS}, \cite{Ry}, \cite{KL}, \cite{GRy}). Under various assumptions of the regularity of
   a bounded open set $D$ it was shown that the Green function of $D$ was comparable with its stable
   counterpart. This comparison allowed to prove the relativistic potential theory shares most of
   the properties of the stable one if bounded sets are considered.  Comparing   the potential kernel
    for the stable process with the potential kernel for the relativistic process (see \cite{RSV}) we can
     conclude that such a comparison of Green functions is not generally possible for unbounded sets.
 Since the relativistic potential kernel (for $d\ge 3$) is asymptotically equivalent (if $|x-y|$ is large)
 to that of the Brownian
 motion it may suggest that the Green function of $\H$, at least for some part of  the range of $x,y$,
 is comparable with  the  Green function of    $\H$ for the  Brownian motion. Our  main result confirms
 that suggestion  and we prove the comparability  for points $x,y$ being away from the boundary and
 with $|x-y|\ge 1$. For other points our bound is also optimal.

 We also thoroughly examine the one-dimensional case and provide
 optimal estimates for the Green functions for bounded intervals taking into account their length.
 While for intervals of moderate length (say smaller than $1$) we can use the well known results about
 comparability  of stable and relativistic Green functions, for large intervals we relay on the estimates
 for half-lines obtained in this paper. Again we show that the Green functions for large intervals are
 comparable to  the Brownian  Green functions for most of the range.

 The organization of the paper is as follows.
 In Section 2 we collect all  definitions and preliminary results needed for the rest of the paper.
 The next section is basic for the paper. Here we prove the estimates for the Green function of
  $(0,\infty)$. Then in Section 4 we apply them to prove the optimal bounds for the tail function of
  the exit time from $(0,\infty)$ and some other properties of the exit times. These estimates
  will have a crucial role in examining multidimensional case which was accomplished
  in  Section 5. We conclude the paper with exploring in the last section the one-dimensional case
  with regard to optimal estimates for  bounded intervals.

\section{Preliminaries}
\setcounter{equation}{0}

    Throughout the paper  by $c, C, C_1\,\dots$ we  denote
       nonnegative constants which may depend on other constant parameters only.
        The value of $c$ or $C, C_1\,\ldots$ may change from line to line in a chain
         of estimates.

      The notion $p(u)\approx q(u),\ u \in A$ means that the ratio
      $p(u)/ q(u),\ u \in A$ is bounded from  below and above
      by positive constants which may depend on other constant parameters only but does not depend on the set $A$.

We present in this section some basic material regarding the
$\alpha$-stable relativistic process. For more  detailed
information, see \cite{Ry} and \cite{C}. For questions regarding
Markov and strong Markov property, semigroup properties,
Schr\"{o}dinger operators and basic potential theory, the reader
is referred to \cite{ChZ} and \cite{BG}.

We first introduce an appropriate class of subordinating
processes.
 Let $\theta_{\alpha}(t,u)$, $u, t>0$, denote the density function
of the strictly $\alpha/2$-stable positive standard subordinator,
$0<\alpha<2$, with the Laplace transform $e^{-t
\lambda^{\alpha/2}}$.

Now for $m>0$ we define another subordinating process
$T_{\alpha}(t,m)$ modifying the corresponding probability density
function in the following way:
 $$
 \theta_{\alpha}(t,u,m)= e^{mt}\,\theta_{\alpha}(t,u)\,e^{-m^{2/\alpha}u},  \quad u>0\,.
 $$
 We derive the Laplace transform of $T_{\alpha}(t,m)$ as follows:
 \begin{equation} \label{subord2}
 E^{0} e^{-\lambda T_{\alpha}(t,m)} =e^{mt}\, e^{-t (\lambda+m^{2/\alpha})^{\alpha/2}}, \quad \lambda\ge -m^{2/\alpha}.
 \end{equation}
 Let $B_t$ be the symmetric Brownian motion in $\Rd$ with the characteristic
  function of the form
 \begin{equation} \label{brownian}
 E^{0}e^{i\xi \cdot B_t} = e^{-t|\xi|^2}\,.
 \end{equation}
 Assume that the processes $T_{\alpha}(t,m)$ and $B_t$ are stochastically independent.
 Then the process $X_t^{\alpha,m}= B_{T_{\alpha}(t,m)}$ is called the
  $\alpha$-stable relativistic process
 (with parameter $m$).
  In the sequel we use the generic notation $X^m_t$ instead of
   $X_t^{\alpha,m}$.
  If $m=1$ we write
  $T_{\alpha}(t)$ instead of $T_{\alpha}(t,m)$ and $X_t$ instead of $X_t^1$. From   (\ref{subord2}) and (\ref{brownian}) it is clear that
  the characteristic
  function of $X^m_t$ is  of the form
 $$ 
 E^{0}e^{i\xi \cdot X^m_t } = e^{mt}e^{-t(|\xi|^2+m^{2/\alpha})^{\alpha/2}}\,.
 $$
Obviously in the case $m=0$ the corresponding process is the
standard (rotationally invariant or isotropic) $\alpha$-stable
process.
  $X^m_t$ is a L\'evy process (i.e. homogeneous, with independent
 increments). We always assume that sample paths of the process $X^m_t$ are right-continuous
 and have left-hand limits ("cadlag"). Then $X^m_t$ is Markov and has the strong
 Markov property under the so-called standard filtration.

 From the form of the Fourier transform we have the following scaling property:
 \begin{equation}\label{scaling}
 {p}^m_{t}(x)=m^{d/\alpha}{p}^1_{mt}(m^{1/\alpha}x).
   \end{equation}
 In terms of one-dimensional distributions of the relativistic process (starting from
 the point $0$) we obtain
 $$
 X^m_t \sim m^{-1/\alpha} X_{mt}\,,
 $$
 where $X_t$ denotes the relativistic $\alpha$-stable process with parameter $m=1$
 and "$\sim$" denotes equality of distributions.
 Because of this scaling property, we usually restrict our attention to the case
  when $m=1$, if not specified otherwise.  When $m=1$ we omit the superscript "$1$", i.e. we write $p_{t}(x)$
  instead of $p_t^1(x)$, etc.

 Various potential-theoretic objects in the theory of the process $X_t$ are expressed
 in terms of modified Bessel functions $K_{\nu}$ of the second kind, called also Macdonald
 functions. For convenience of the reader
 we collect here basic information about these functions.

  $K_\nu,\ \nu\in \mathbb{R} $, the modified Bessel function of the second kind
   with index $\nu$,  is given by the
 following formula:
 $$ K_\nu(r)=  2^{-1-\nu}r^\nu \int_0^\infty e^{-u}e^{- \frac{r^2}{4u}}u^{-1-\nu}du\,,
 \quad r>0.$$
  For properties of  $K_\nu$ we refer the reader to \cite{E1}. In the sequel we will
  use the asymptotic behaviour of $K_\nu$:

 \begin{eqnarray}
  K_\nu(r)& \cong& \frac{\Gamma(\nu)}{2} \left(\frac{r}{2}\right)^{-\nu},\quad  r\to 0^+, \ \nu>0,
  \label{asympt0}\\
    K_0(r)&\cong& -\log r,  \quad r\to 0^+, \label{asympt00}\\
    K_\nu(r)&\cong& \frac {\sqrt{\pi}} {\sqrt{2 r}}\,e^{-r},  \quad r\to \infty, \label{asympt_infty}
 \end{eqnarray}
 where  $g(r) \cong f(r) $ denotes that the ratio of $g$ and $f$ tends to $1$.
  For  $\nu <0 $ we have $K_\nu(r)=K_{-\nu}(r)$, which determines the asymptotic
  behaviour for negative indices.


  The  $\alpha$-stable relativistic  density (with parameter $m=1$)
   can now be computed in the following way:
  \begin{equation} \label{reldensity0}
 p_t(x)=\int_0^\infty e^{t}\,\theta_{\alpha}(t,u)\,e^{-u}\, g_u(x) du,
 \end{equation}
 where $g_u(x)=\frac1{(4\pi u)^{d/2}}e^{- \frac{|x|^2}{4u}}$ is  the Brownian semigroup, defined by \pref{brownian}.

We also recall the form of the density function $\nu(x)$ of the
L{\'e}vy measure of the
 relativistic $\alpha$-stable process (see \cite{Ry}):
  \begin{eqnarray}
  \nu(x)&=& \frac \alpha{2\Gamma(1- \frac{\alpha }{2})}\int_0^\infty e^{-u}\, g_u(x)\, u^{-1-\alpha/2} du\label{levymeasure0}\\
  &=&
  \frac{\alpha 2^{\frac{\alpha -d}{ 2}}}{ \pi^{d/2} \Gamma(1- \frac{\alpha }{2})}
   |x|^{ -\frac{d+\alpha }{ 2}}
  K_{ \frac{d+\alpha }{ 2}}(|x|)\,.\label{levymeasure}
  \end{eqnarray}

In the case $0<\alpha <2$ we have the following useful estimates
(see \cite{Ry} for the proof of the first lemma):

  \begin{lem} \label{transden_0}
  There exists a constant $c=c(\alpha,d)$ such that
  \begin{equation} \label{transupper}
  \max_{x \in {\Rd}} p_t(x) \le c (t^{-d/2} + t^{-d/{\alpha}}) \,.
  \end{equation}
  \end{lem}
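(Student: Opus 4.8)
The plan is to bound $p_t(x)$ uniformly in $x$ by estimating the integral representation \pref{reldensity0}, splitting the range of the subordination variable $u$ at a threshold that depends on $t$. First I would note that $\max_{x\in\Rd} g_u(x)=g_u(0)=(4\pi u)^{-d/2}$, so that
\begin{equation*}
\max_{x\in\Rd} p_t(x) \le e^{t}\int_0^\infty \theta_\alpha(t,u)\,e^{-u}\,(4\pi u)^{-d/2}\,du.
\end{equation*}
The exponential prefactor $e^t$ is harmless for small $t$, and for large $t$ it will be absorbed by the $e^{-u}$ weight together with known bounds on $\theta_\alpha$; alternatively one observes from \pref{subord2} with $\lambda=0$ that $e^{t}\int_0^\infty\theta_\alpha(t,u)e^{-u}\,du = 1$, so the measure $e^t\theta_\alpha(t,u)e^{-u}\,du$ is a probability measure on $(0,\infty)$ — call it the law of $T_\alpha(t)$. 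Thus the task reduces to estimating $E^0\big[(4\pi T_\alpha(t))^{-d/2}\big]$.

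Next I would split this expectation according to whether $T_\alpha(t)$ is small or large, with the natural scaling threshold. For the stable subordinator $\theta_\alpha(t,u)$ one has the self-similarity $\theta_\alpha(t,u)=t^{-2/\alpha}\theta_\alpha(1,ut^{-2/\alpha})$ together with the classical bound $\theta_\alpha(1,v)\le c\,v^{-1-\alpha/2}$ for $v$ bounded away from $0$ and the superexponential decay of $\theta_\alpha(1,v)$ as $v\to 0^+$. Using $e^{-u}\le 1$ and $e^t\theta_\alpha(t,u)\le c\,e^t\,\theta_\alpha(t,u)$, on the region $u\le t^{2/\alpha}$ the factor $u^{-d/2}$ is the dangerous one and is controlled by the rapid decay of $\theta_\alpha$ near $0$, producing a contribution of order $t^{-d/\alpha}$; on the region $u\ge t^{2/\alpha}$ we instead keep the full weight and the bound $u^{-d/2}$ is already integrable against $e^{-u}$ if $t$ is not too small, and against the tail of $\theta_\alpha$ if $t$ is small, yielding a contribution of order $t^{-d/2}$. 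Adding the two pieces gives \pref{transupper}.

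Alternatively — and this may be the cleanest route — one can reduce to the two extreme cases $m=0$ and the Brownian bound directly. From \pref{reldensity0}, using $e^{-u}\le 1$ one gets $p_t(x)\le e^t\,p_t^{(0)}(x)$ where $p_t^{(0)}$ is the isotropic $\alpha$-stable density, which satisfies $\max_x p_t^{(0)}(x)\le c\,t^{-d/\alpha}$ by its own scaling; this handles small $t$ (where $e^t\le e$, say $t\le 1$). For large $t$, one uses instead $e^t\theta_\alpha(t,u)e^{-u}\le \theta_\alpha(t,u)\cdot e^{t}e^{-u}$ and the fact that, since $e^t\int\theta_\alpha(t,u)e^{-u}du=1$, the factor $e^t e^{-u}$ restricted by $\theta_\alpha$ effectively forces $u\gtrsim t$ (the Laplace-transform identity \pref{subord2} shows $E^0 T_\alpha(t)\asymp t$ for $t\ge 1$), so $g_u(x)\le (4\pi u)^{-d/2}\lesssim t^{-d/2}$ on the bulk of the mass; a routine tail estimate shows the remaining small-$u$ part is also $O(t^{-d/2})$. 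Combining the two regimes $t\le 1$ and $t\ge 1$ gives the stated bound $c(t^{-d/2}+t^{-d/\alpha})$.

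The main obstacle is making the large-$t$ estimate rigorous: one must show that $E^0[(4\pi T_\alpha(t))^{-d/2}]\le c\,t^{-d/2}$ for $t\ge 1$, i.e. that the negative moment of the modified subordinator does not blow up. This requires a genuine lower tail estimate for $T_\alpha(t)$ — controlling $P^0(T_\alpha(t)\le s)$ for small $s$ — which follows from the exponential tilting relating $\theta_\alpha(t,u,m)$ to $\theta_\alpha(t,u)$ and the known superexponentially small left tail of the stable subordinator; once that is in hand the negative-moment bound is a short computation. Everything else (the small-$t$ case, the measurability and interchange of integrals, the use of $g_u(0)$ as the max) is routine.
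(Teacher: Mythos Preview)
The paper does not supply its own proof of this lemma; it is quoted from \cite{Ry} with the parenthetical ``see \cite{Ry} for the proof of the first lemma''. So there is no in-paper argument to compare your proposal against.

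Your approach is correct. The small-$t$ case via $e^{-u}\le 1$ and the stable density scaling is clean and complete. For the large-$t$ case you correctly isolate the issue as the negative-moment bound $E\big[T_\alpha(t)^{-d/2}\big]\le C\,t^{-d/2}$. The quickest way to make this rigorous---cleaner than the lower-tail route you sketch---is to use the identity $u^{-d/2}=\Gamma(d/2)^{-1}\int_0^\infty s^{d/2-1}e^{-su}\,ds$ together with the explicit Laplace transform \pref{subord2}, which gives
\[
E\big[T_\alpha(t)^{-d/2}\big]=\frac{1}{\Gamma(d/2)}\int_0^\infty s^{d/2-1}\,e^{\,t(1-(1+s)^{\alpha/2})}\,ds.
\]
For $t\ge 1$ the concavity bound $(1+s)^{\alpha/2}-1\ge (2^{\alpha/2}-1)s$ on $[0,1]$ handles the region $s\le 1$ and yields $C\,t^{-d/2}$; on $s\ge 1$ the exponent is at most $-t(2^{\alpha/2}-1)$, so that piece is exponentially small in $t$. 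Your alternative via a Chebyshev lower-tail estimate for $T_\alpha(t)$ also works and is in fact the device the paper itself employs later (see the proof of Corollary~\ref{density1A}), but it makes the subsequent integration of $u^{-d/2}$ against the tail slightly more laborious.
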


\begin{lem} \label{transden2}
For any $t>0$ and $x\in \Rd$ we have
$$p_t(x)\le c(d,\alpha) \left(g_{t}(x/\sqrt{2})+t\nu(x/\sqrt{2})\right)$$
and
$$p_t(x)\le \frac{c(d)}{|x|^d}.$$
\end{lem}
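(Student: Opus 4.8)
The plan is to bound $p_t(x)$ by splitting the integral representation \pref{reldensity0} into a region of small subordination times and a region of large subordination times, comparing the small-time part with the Gaussian kernel $g_t$ and the large-time part with $t\nu(x)$. Writing $p_t(x)=\int_0^\infty e^t\theta_\alpha(t,u)e^{-u}g_u(x)\,du$, I would first observe that $g_u(x)=g_{u}(x)$ is, for fixed $x$, decreasing in $u$ once $u$ is small relative to $|x|^2$ and that $e^{-|x|^2/(4u)}\le 1$ always; the key elementary inequality is that for $u\ge t$ we have $g_u(x)\le c\,u^{-1-\alpha/2}\cdot u^{1+\alpha/2}(4\pi u)^{-d/2}e^{-|x|^2/(4u)}$, which after matching powers is comparable to $u^{-1-\alpha/2}g_u(x)\cdot u^{1+\alpha/2}$, so that the tail contribution is controlled by the L\'evy density \pref{levymeasure0}. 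More precisely, on $\{u>t\}$ one uses $e^t\theta_\alpha(t,u)e^{-u}\le c\,t\,u^{-1-\alpha/2}e^{-u}$ (a standard bound for the stable subordinator density, valid since $\theta_\alpha(t,u)\le c\,t\,u^{-1-\alpha/2}$ for all $u,t$) together with the definition \pref{levymeasure0} of $\nu$; the half-argument $x/\sqrt2$ appears because one wants a clean comparison $g_u(x)\le c\, g_{u}(x/\sqrt2)$-type manipulation absorbing the $e^{t}$ factor and any polynomial mismatch, or rather because $g_u(x)^{1/2}$-splittings and the inequality $ab\le \tfrac12(a^2+b^2)$ are applied to the two exponential factors $e^{-|x|^2/(8u)}\cdot e^{-|x|^2/(8u)}$.

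For the small-time region $\{0<u\le t\}$ I would use that $g_u(x)$ as a function of $u$ satisfies $g_u(x)\le c\, g_{t}(x/\sqrt2)$ whenever $u\le t$ — this needs a short lemma: from $g_u(x)=(4\pi u)^{-d/2}e^{-|x|^2/(4u)}$ one checks $\sup_{0<u\le t} g_u(x)\le c\,(4\pi t)^{-d/2}e^{-|x|^2/(8t)}$ by distinguishing $|x|^2\le t$ (where $g_u(x)\le (4\pi u)^{-d/2}\le$ nothing useful — so instead bound $e^{-|x|^2/(4u)}$ against $e^{-|x|^2/(8t)}$ times a polynomial correction) and $|x|^2>t$ (where the maximum of $u\mapsto u^{-d/2}e^{-|x|^2/(4u)}$ over $u\le t$ is attained at $u=t$ up to constants). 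Granting this, on $\{u\le t\}$ we pull out $g_{t}(x/\sqrt2)$ and are left with $g_{t}(x/\sqrt2)\int_0^\infty e^t\theta_\alpha(t,u)e^{-u}\,du = g_{t}(x/\sqrt2)\,E^0 e^{-T_\alpha(t,1)}\cdot e^{t}\le g_t(x/\sqrt2)$ since by \pref{subord2} that expectation equals $e^{t}e^{-t\,2^{\alpha/2}}\le 1$. Adding the two pieces yields $p_t(x)\le c(g_t(x/\sqrt2)+t\,\nu(x/\sqrt2))$.

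The second bound $p_t(x)\le c(d)|x|^{-d}$ follows from the first one combined with the asymptotics \pref{asympt0} and \pref{asympt_infty} of the Macdonald function: from \pref{levymeasure} we have $\nu(y)\approx |y|^{-d-\alpha}$ for $|y|\le 1$ and $\nu(y)\approx |y|^{-(d+\alpha+1)/2}e^{-|y|}$ for $|y|\ge 1$, so $t\,\nu(x/\sqrt2)\le c\,t\,|x|^{-d}\cdot(|x|^{-\alpha}\wedge |x|^{(d-\alpha-1)/2}e^{-|x|/\sqrt2})$, and one also needs $g_t(x/\sqrt2)\le c|x|^{-d}$, which is immediate from $s^{d/2}e^{-s}\le c$ applied with $s=|x|^2/(8t)$. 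But this only gives the bound when $t$ is controlled; for large $t$ one instead invokes Lemma \ref{transden_0}, or rather one notes that $p_t(x)=p_t(x)\le \max_x p_t(x)$ is useless for large $|x|$, so the honest route is: for $t\le |x|^2$ the Gaussian term is $\le c|x|^{-d}$ and the L\'evy term is $\le t\nu(x/\sqrt2)\le |x|^2\nu(x/\sqrt2)\le c|x|^{-d}$ using the polynomial (resp. exponential) decay of $\nu$; for $t\ge |x|^2\ge$ (say) $|x|^{\alpha}$ one uses $p_t(x)\le \max p_t\le c\,t^{-d/2}\le c|x|^{-d}$ from Lemma \ref{transden_0}.

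The main obstacle I anticipate is the small-time lemma $\sup_{0<u\le t}g_u(x)\le c\,g_t(x/\sqrt2)$: getting the right constant and the factor $\sqrt2$ requires care in the regime $|x|^2$ comparable to $t$, where neither "$g_u$ increasing" nor "$g_u$ decreasing" holds uniformly, and one must trade a polynomial factor $(|x|^2/u)^{d/2}$ against the exponential gap $e^{-|x|^2/(4u)+|x|^2/(8t)}$. Everything else is bookkeeping with \pref{subord2}, \pref{levymeasure0}, and the Macdonald asymptotics.
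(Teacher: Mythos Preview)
Your proposal contains a genuine gap: the ``small-time lemma'' $\sup_{0<u\le t} g_u(x)\le c\,g_t(x/\sqrt2)$ is \emph{false}. Take $|x|=\varepsilon$ small and $t=1$: the map $u\mapsto g_u(x)$ attains its global maximum at $u_*=|x|^2/(2d)<1$, with value $c_d\,\varepsilon^{-d}$, whereas $g_1(x/\sqrt2)\approx (4\pi)^{-d/2}$. No trade of polynomial against exponential is possible here because in this regime both exponentials are essentially $1$. You flag this as ``the main obstacle'' but treat it as a matter of care; it is in fact an obstruction. A second, related problem: the claimed inequality $e^t\theta_\alpha(t,u)e^{-u}\le c\,t\,u^{-1-\alpha/2}e^{-u}$ on $\{u>t\}$ is false for large $t$, since $\theta_\alpha(t,u)\le c\,t\,u^{-1-\alpha/2}$ does not absorb the factor $e^t$.

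The paper's proof fixes both issues by treating $t\ge 1$ and $t\le 1$ separately. For $t\le 1$ one has $e^t\le e$, so \pref{thetagora} gives directly $p_t(x)\le c\,t\,\nu(x)\le c\,t\,\nu(x/\sqrt2)$. For $t\ge 1$ the split is at $u=2t$ (not $u=t$), so that on the tail $e^t e^{-u}\le e^{-u/2}$ and the comparison with $\nu(x/\sqrt2)$ via \pref{levymeasure0} goes through. On $\{u\le 2t\}$ the paper does \emph{not} bound $g_u(x)$ pointwise by $g_t(x/\sqrt2)$; instead it pulls out only the $x$-dependent exponential, using $e^{-|x|^2/(4u)}\le e^{-|x|^2/(8t)}$, and leaves the factor $(4\pi u)^{-d/2}$ inside the integral. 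What remains is then exactly $e^{-|x|^2/(8t)}\,p_t(0)$, and Lemma~\ref{transden_0} gives $p_t(0)\le c\,t^{-d/2}$ for $t\ge 1$, hence $e^{-|x|^2/(8t)}p_t(0)\le c\,g_t(x/\sqrt2)$. This integrated (rather than pointwise) treatment of the small-$u$ part is the missing idea.

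For the second inequality, your route via the first inequality plus Lemma~\ref{transden_0} is more involved than needed and has a small gap when $|x|<1$ and $|x|^2<t<|x|^\alpha$ (there $t^{-d/\alpha}>|x|^{-d}$, so the global maximum bound does not suffice). The paper's argument is one line: $g_u(x)\le (d/2\pi)^{d/2}e^{-d/2}|x|^{-d}$ uniformly in $u$ (optimize $u\mapsto u^{-d/2}e^{-|x|^2/(4u)}$), and then subordination gives $p_t(x)=E\,g_{T_\alpha(t)}(x)\le c(d)|x|^{-d}$ directly.
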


\begin{proof}
Notice that for $u,t>0$,
$$\theta_{\alpha}(t,u)=t^{-2/\alpha}\theta_{\alpha}(1,t^{-2/\alpha}u) \quad \textrm{and}\quad \theta_{\alpha}(1,u)\le c u^{-1-\alpha/2}.$$
Hence
\begin{equation}\label{thetagora}\theta_{\alpha}(t,u)\le c t u^{-1-\alpha/2},\qquad t,u>0.\end{equation}
Using \pref{reldensity0} we obtain for $t\ge 1$,
\begin{eqnarray*}
p_t(x)&\le&
e^{-\frac{|x|^{2}}{8t}}(4\pi)^{-\frac{d}{2}}e^{t}\int^{2
t}_{0}\theta_{\alpha}(t,u)e^{-u}u^{-d/2}du\\& & +\; c t
\int^{\infty}_{2t}g_u(x)e^{-\frac{u}{2}}u^{-1-\alpha/2}du\\
&\le& e^{-\frac{|x|^{2}}{8t}}(4\pi)^{-\frac{d}{2}}e^{t}
\int^{\infty}_{0}\theta_{\alpha}(t,u)e^{-u}u^{-d/2}du\\ & & +\; c
t
\int^{\infty}_{0}g_u(x)e^{-\frac{u}{2}}u^{-1-\alpha/2}du\\
&=& e^{-\frac{|x|^{2}}{8t}}p_t(0)+c t\nu(x/\sqrt{2}),
 \end{eqnarray*}
 where we used \pref{levymeasure0} in the last line. Moreover by Lemma \ref{transden_0}
 we can estimate
 $$e^{-\frac{|x|^{2}}{8t}}p_t(0)\le c g_{t}(x/\sqrt{2}),\quad  t\ge 1.$$
 This completes the proof of the first estimate for $t\ge 1$.

Next, for $t\le 1$, applying (\ref{reldensity0}),
(\ref{thetagora}) and (\ref{levymeasure0}) we arrive at
$$
p_t(x)\le c t \int^\infty_0 g_u(x)e^{-u}u^{-1-\alpha/2}du=c t
\nu(x)\le c t \nu(x/\sqrt{2}),
$$
which complete the proof the first inequality.

The second bound is true for the transition density of any
subordinated Brownian motion. Indeed let us observe that for any
$t>0$ and $x\in\Rd,$
$$g_t(x)\le \left(\frac{d}{2\pi}\right)^{d/2}e^{-\frac{d}{2}}|x|^{-d}.$$
Hence by subordination $$p_t(x)=Eg_{T_\alpha(t)}(x)\le
\left(\frac{d}{2\pi}\right)^{d/2}e^{-\frac{d}{2}} |x|^{-d}.$$
\end{proof}

The standard reference book on general potential theory is the
 monograph \cite{BG}. For
 convenience of the reader we collect here the basic information with emphasis on
 what is known (and needed further on) about the $\alpha$-stable relativistic process.

In general potential theory  a very important role is played by
$\lambda$-resolvent (potential)  kernels, $\lambda>0$ , which are
defined as

$$U_\lambda(x,y)=\int_0^\infty e^{-\lambda t} p_t(x-y)dt, \ x,y\in \Rd.$$

If the defining integral above is finite for $\lambda=0$, the
corresponding kernel is called a potential kernel and will be
denoted by $U(x,y)$. For the relativistic process the potential
kernel is well defined for $d\ge 3$ but contrary to the stable or
Brownian case it is not expressible as an elementary function.
Recall that for the isotropic $\alpha$-stable process the
potential kernel is equal to $C|x-y|^{\alpha-d}$ for $d>\alpha$
and for the Brownian motion it is $C|x-y|^{2-d}$ for $d\ge 3$,
where $C$'s are appropriate constants. One can prove that the
relativistic potential kernel  could be written as a series
involving the Macdonald
 functions of different orders but this formula does not seem very useful.
 Nevertheless the asymptotic behaviour  of the potential kernel was   established in  \cite{G}, \cite{RSV}.

  \begin{equation}\label{0-potential}
  U(x-y)\approx  |x-y|^{\alpha-d}, |x-y|\le 1;\quad
   U(x-y)\approx  |x-y|^{2-d}, |x-y|\ge 1.
  \end{equation}
Note that they suggest that the process locally
 behaves like a stable one and globally like a Brownian motion.
Despite the fact  we do not know any simple form for the potential
kernel, a formula for the $1$-potential kernel is known (e.g. see
\cite{ByczRyzMal}): $$
  U_1(x)=C(\alpha,d)\,
   \frac{K_{(d-\alpha)/2}(|x|)}{  |x|^{(d-\alpha)/2}}\,,
  $$
  where $C(\alpha,d)= \frac{2^{1-(d+\alpha)/2} }{ {\Gamma(\alpha/2)\pi^{d/2}}}$.

    The {\it first exit time} of an (open)
   set  $D\subset {\Rd}$
   by the process $X_t$ is defined by the formula
   $$
   \tau_{D}=\inf\{t> 0;\, X_t\notin D\}\,.
   $$

  The basic object in potential theory of $X_t$ is the
  $\lambda$-{\it harmonic measure}  of the
  set $D$. It is defined by the formula:
  $$ 
  P_D^{\lambda}(x,A)=
  E^x[\tau_D<\infty; e^{-\lambda \tau_D} {\bf{1}}_A(X_{\tau_D})].
  $$
  The density kernel of  the measure $P_D^{\lambda}(x,A)$ (if it exists) is called the
  $\lambda$-{\it Poisson kernel} of the set $D$. If $\lambda=0$ the corresponding kernel will be
  denoted by $P_D(x,z)$ called {\it Poisson kernel} of the set $D$.

  Another fundamental object of potential theory is the {\it killed process} $X_t^D$
  when exiting the set $D$. It is defined in terms of sample paths up to time $\tau_D$.
  More precisely, we have the following "change of variables" formula:
  $$
  E^x f(X_t^D) =  E^x[t<\tau_D; f(X_t)]\,,\quad t>0\,.
  $$
  The density function of transition probability of the process $X_t^D$ is denoted
  by $p_t^{D}$. We have
  \begin{equation}
  p_t^{D}(x,y) = p_t(x-y) -
   E^x[t> \tau_D; p_{ t-\tau_D}(X_{\tau_D}-y)]    \,, \quad x, y \in {\Rd}\,.\label{density100}
  \end{equation}
  Obviously, we obtain
   $$
     p_t^{D}(x,y) \le p_t(x,y) \,, \quad x, y \in {\Rd}\,.
   $$

  $p_t^{D}$ is a strongly contractive semigroup (under composition) and shares most
  of properties of the semigroup $ p_t$. In particular, it is strongly Feller and
  symmetric: $ p_t^{D}(x,y) =  p_t^{D}(y,x)$.

  The $\lambda$-potential of the process $X_t^D$ is called the
  $\lambda$-{\it Green function} and is denoted by $G_D^{\lambda}$. Thus, we have
  $$
   G_D^{\lambda}(x,y)= \int_0^{\infty} e^{-\lambda t}\,p_t^{D}(x,y)\,dt\,.
  $$

  If $\lambda=0$ the corresponding kernel will be called {\it Green function} of the set $D$ and denoted  $G_D(x,y)$.

  Integrating (\ref{density100}) we obtain  for $\lambda >0$,
$$G^\lambda_{D}(x,y)=U_\lambda (x,y)-E^x\, e^{-\lambda\tau_{D}}U_\lambda (X_{\tau_{D}},y). $$
 Suppose that $D_1\subset D_2$ are two open sets. By the Strong Markov Property
\begin{eqnarray}
& &\!\!\!\! G^\lambda_{D_2}(x,y)-G^\lambda_{D_1}(x,y)\nonumber\\
& &\!= E^x\left[e^{-\lambda\tau_{D_1}}
U_\lambda(X_{\tau_{D_1}},y)-e^{-\lambda\tau_{{D_2}}}
U_\lambda(X_{\tau_{D_2}},y)\right]\nonumber\\& &\!=
E^x\left[\tau_{D_1}<\tau_{D_2};e^{-\lambda\tau_{D_1}}
\left(U_\lambda(X_{\tau_{D_1}},y)-e^{-\lambda\tau_{D_2}\circ\theta_{\tau_{D_1}}}U_\lambda(X_{\tau_{D_2}},y)\right)\right]\nonumber\\
& &\!=E^x\left[\tau_{D_1}<\tau_{D_2};e^{-\lambda\tau_{D_1}} \left(U_\lambda(X_{\tau_{D_1}},y)-E^{X_{\tau_{D_1}}}e^{-\lambda\tau_{D_2}}U_\lambda(X_{\tau_{D_2}},y)\right)\right]\nonumber\\
& &\! = E^x\left[\tau_{D_1}<\tau_{D_2};e^{-\lambda\tau_{D_1}}
G^\lambda_{D_2}(X_{\tau_{D_1}},y)\right].\label{greenpot100}
\end{eqnarray}

  The main purpose of the present paper is to obtain sharp estimates for the Green
  function for $D=\H=\{x\in \Rd:\, x_d>0\}$. The  investigation of
  Green functions of the relativistic process  for unbounded sets seems
  not to be treated in the literature. For bounded sets there many results obtained in recent
  years showing that the Green functions for open bounded sets under some assumptions about
  regularity of their boundary are comparable to their stable counterparts in
  $\Rd$, $d> \alpha$ (\cite{Ry}, \cite{CS}, \cite{KL}). That is,
  for $x,y\in D$,
  \begin{equation}\label{comparison}
  C(D)^{-1} G^{stable}_D(x,y)\le  G_D(x,y)\le C(D) G^{stable}_D(x,y),
  \end{equation}
  where $G_D^{stable}$ is the corresponding Green function for the isotropic stable process and  $C(D)$ is a constant usually dependent  on $\textrm{diam} (D)= sup_{x,y\in D}|x-y|$.
  Unfortunately in all known general bounds of the above type the dependence on
  the set $D$ in the constant $C(D)$ is not very clear and  $C(D)$ grows to $\infty$ with
  $\textrm{diam} (D)$. The constant also   depends on some other characteristics of $D$ as e.g.
  Lipschitz characteristic of $D$ when $D$ is a Lipschitz set.    Therefore  it is not
possible to use well known exact formulas or  estimates for the
stable Green functions of regular sets as  half-spaces, balls or
cones to derive the corresponding optimal estimates for the
relativistic process. Even for balls the constants grow to
$\infty$
 and (\ref{comparison}) does not yield any estimate
for a half-space in the limiting procedure.

Now suppose that $D$ is a  bounded set with a $C^{1,1}$ boundary.
It is well known that there is a $\rho>0$ such that for each point
$ z\in \partial D$ there are balls $B_z\subset D$,   $B^*_z
\subset D^c$ of radius $\rho$ such that $z\in \overline{B_z}
\cap\overline{B_z^*}$.  Denote by $\rho_0=\rho_0(D)$ the largest
$\rho$ having the above property. Finally let $\gamma=
\textrm{diam}\, D/\rho_0$. However not explicitly stated, the
following bound can be deduced from the results proved in
\cite{Ry}, for $d>\alpha$, $x,y\in D$:
\begin{eqnarray}\label{comparisonR}
  C_1(\gamma)C( \textrm{diam}(D))^{-1} G^{stable}_D(x,y)&\le&G_D(x,y)\nonumber\\
  &\le&  C_2(\gamma)\,C( \textrm{diam}(D))
  G^{stable}_D(x,y),\nonumber\\
  \end{eqnarray}
  where the constant $C$ can be chosen in such a way that $C(\textrm{diam}(D))=1$ for  $\textrm{ diam}(D)\le 1$ and
  $C(\textrm{diam}(D))$ increases with $\textrm{diam}(D)$. With some extra effort one can prove that the growth
   is polynomial. The constants  $C_1(\gamma), C_2(\gamma)$ can be chosen as continuous with respect to $\gamma$.
 Note that  if $D$ is a ball than we can take absolute constants (depending only on $\alpha$ and $d$) instead of $C_1(\gamma), C_2(\gamma)$.

Hence  for "smooth" sets with small or moderate diameter the
estimate (\ref{comparisonR}) is very satisfactory. For example for
balls of small or moderate diameter we obtain very precise
estimates using well known results for the isotropic stable
process. However, in the case of balls of large size, it would be
very interesting to find optimal estimates of the relativistic
Green function. Our main result  provides optimal estimates for
the Green function of the half-space $\H$. Also we found optimal
estimates for  intervals in $\mathbb{R}$. Despite the fact we do
not examine Green functions for balls   in higher dimensional
spaces we provide very precise estimates of the expected first
exit time from a ball.

Now we define harmonic and regular harmonic functions. Let $u$ be
a Borel measurable function on $\Rd$. We say that $u$ is {\em
harmonic} function in an open set $D\subset \Rd$ if
$$u(x)=E^xu(X_{\tau_B}), \quad x\in B,$$
for every bounded open set $B$ with the closure
$\overline{B}\subset D$. We say that $u$ is {\em regular harmonic}
if
$$u(x)=E^x[\tau_D<\infty; u(X_{\tau_D}))], \quad x\in D.$$

As a result of (\ref{comparisonR}) we obtain the following version
of the Boundary Harnack Principle (for details see \cite{Ry} or
\cite{GRy} in the  one-dimensional case).

\begin{thm}\label{BHP}[BHP] Let $D$ be a  bounded set with a
$C^{1,1}$ boundary. Suppose that   $diam\, D \le 4$ and
$\rho_0(D)\ge 1$. Let $ z\in \partial D$. If $f$ is a non-negative
regular harmonic function
 on D and $f(x)=0$, $x\in B(z,1)\cap D^c$. Then
$$f(x)\approx f(x_0)\delta_D(x)^{\alpha/2},\quad x\in  B(z,1/2), $$
where  $\delta_D(x)=dist(x, \partial D )$ and $x_0 \in D$ such
that $\delta_D(x_0)=1$.
\end{thm}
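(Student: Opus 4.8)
The plan is to deduce the stated two-sided bound from the comparison \pref{comparisonR} between the relativistic Green function $G_D$ and the stable Green function $G^{stable}_D$, together with the already-known Boundary Harnack Principle and boundary decay estimates for the isotropic $\alpha$-stable process on $C^{1,1}$ sets. The hypotheses $\operatorname{diam} D\le 4$ and $\rho_0(D)\ge 1$ serve two roles: first, since $\operatorname{diam} D\le 4$ is bounded by an absolute constant, the factor $C(\operatorname{diam}(D))$ in \pref{comparisonR} is an absolute constant; second, $\rho_0(D)\ge 1$ together with $\operatorname{diam} D\le 4$ keeps $\gamma=\operatorname{diam} D/\rho_0(D)\le 4$ bounded, so that $C_1(\gamma),C_2(\gamma)$ are also absolute constants. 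Consequently \pref{comparisonR} reads $G_D(x,y)\approx G^{stable}_D(x,y)$ with constants depending only on $\alpha$ and $d$.

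First I would reduce the harmonic-function estimate to a Green-function estimate. A standard Martin-kernel / approximation argument shows that a non-negative regular harmonic function $f$ on $D$ that vanishes on $B(z,1)\cap D^c$ can be compared, near $z$, to $G_D(\cdot,y_0)$ for a fixed reference point $y_0\in D$ bounded away from $\partial D$: concretely, one writes $f$ as an integral of the Poisson kernel $P_D(x,w)$ over $w\in \partial B(z,1)\cap D$ region (using that $f$ vanishes on the part of $D^c$ inside $B(z,1)$), and then uses the interior Harnack inequality to replace the Poisson kernel by $G_D(x,y_0)$ up to constants, for $x\in B(z,1/2)$. This step is essentially the same as in the stable case \cite{Ry,GRy}; the uniformity of constants is guaranteed by $\rho_0(D)\ge 1$ and $\operatorname{diam} D\le 4$.

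Next, by the comparison $G_D\approx G^{stable}_D$ I transfer the problem to the isotropic stable process: it suffices to show $G^{stable}_D(x,y_0)\approx \delta_D(x)^{\alpha/2}$ for $x\in B(z,1/2)$, with a fixed $y_0$ satisfying $\delta_D(y_0)\approx 1$. But this is precisely the known sharp Green-function estimate for the isotropic $\alpha$-stable process on $C^{1,1}$ domains (Chen–Song, Kulczycki), which gives $G^{stable}_D(x,y)\approx |x-y|^{\alpha-d}\bigl(1\wedge \tfrac{\delta_D(x)\delta_D(y)}{|x-y|^2}\bigr)^{\alpha/2}$ for $d>\alpha$ (with the appropriate logarithmic or linear modification when $d\le\alpha$, i.e. $d=1$); fixing $y=y_0$ with $\delta_D(y_0)\approx1$ and $|x-y_0|\approx 1$ for $x\in B(z,1/2)$ collapses this to $\delta_D(x)^{\alpha/2}$. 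Finally, since $x_0$ in the statement is any point with $\delta_D(x_0)=1$, interior Harnack gives $f(x_0)\approx f(y_0)$, closing the estimate.

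The main obstacle is the first step: justifying the comparison of $f$ with $G_D(\cdot,y_0)$ with constants that do not blow up, i.e. making the reduction to the Green function genuinely quantitative under only the normalizations $\operatorname{diam} D\le 4$, $\rho_0(D)\ge 1$. This requires a careful use of the interior Harnack inequality for $X^m$ (valid on bounded sets) and control of the Poisson kernel $P_D(x,w)$ of the relativistic process near the boundary — or, alternatively, routing the whole argument through the stable process first (where the corresponding boundary Harnack principle and Carleson-type estimates are established) and only then invoking $G_D\approx G^{stable}_D$. I expect the cleanest exposition is the latter: prove the statement for $G^{stable}_D$ using \cite{Ry}, then multiply through by the comparison constants from \pref{comparisonR}, which under the stated hypotheses are absolute.
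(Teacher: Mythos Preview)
Your proposal is correct and matches the paper's own approach: the paper does not give a detailed proof of this theorem but simply states that it follows from the Green-function comparison \pref{comparisonR} (with constants made absolute by the hypotheses $\operatorname{diam} D\le 4$, $\rho_0(D)\ge 1$), referring to \cite{Ry} and \cite{GRy} for the details of transferring the stable BHP. Your write-up fleshes out exactly this route---reduce to $G^{stable}_D$ via \pref{comparisonR}, then invoke the known stable boundary estimates---so there is nothing to add.
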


 For the purpose of this paper we state the  following specialized form of BHP which can be easily deduced from Theorem \ref{BHP}.
\begin{lem}    \label{BHP1}
Let $\H\ni\mathbf{1}=(0,\dots,0,1)$ and let  $F= B(0,
\sqrt{2})\cap \H$. Suppose that $f$ is a regular nonnegative
harmonic on $F$ such that $f(x)=0,\ x\in  \H^c$. Then for every $
x\in B(0, 1)\cap \H$ we have
$$f(x)\approx f(\mathbf{1})x_d^{\alpha/2}.$$
Assume that $R\ge 2$. Let $D=B(0, R)$,
 $z_0=(0,\dots,0,R)$ and $x_0=(0,\dots,0,R-1)$. Let  $F= B(z_0,
2)\cap D$. Suppose that $f$ is regular nonnegative harmonic on $F$
such that $f(x)=0,\ x\in D^c$. Then for every $ x\in B(z_0, 1)\cap
D$ we have
$$f(x)\approx f(x_0)(R-|x|)^{\alpha/2}.$$
\end{lem}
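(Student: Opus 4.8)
The plan is to reduce Lemma~\ref{BHP1} to Theorem~\ref{BHP} via scaling and an inclusion argument, treating the two assertions separately. For the first statement, set $D = B(\mathbf 1, 1)$, which is a $C^{1,1}$ set with $\operatorname{diam} D = 2 \le 4$ and $\rho_0(D) = 1 \ge 1$, so Theorem~\ref{BHP} applies on $D$ with the boundary point $z = 0 \in \partial D$. I would first observe that $F = B(0,\sqrt 2)\cap\H \supset B(0,1)\cap D$: indeed if $|x|<1$ and $|x-\mathbf 1|<1$ then $x_d>0$ (from $|x-\mathbf 1|<1$) and $|x|<1<\sqrt 2$. A function $f$ that is regular harmonic on $F$ and vanishes on $\H^c$ is then, by the domain monotonicity of harmonicity, harmonic on the smaller set $B(0,1)\cap D = D\cap\H$ near $0$; more carefully, $f$ restricted is regular harmonic on $D\cap\H$ (one uses $\tau_{D\cap\H}\le\tau_F$ on the relevant paths, plus the fact that $X$ exits $\H$ by a jump so $X_{\tau_{D\cap\H}}$ lands either in $D^c$ where we need $f$ defined, or in $\H^c$ where $f=0$; the contribution from $D^c\cap\H$ has to be controlled, which is exactly where one invokes that $f$ is actually given on all of $\Rd$ and the hypotheses of Theorem~\ref{BHP} only require $f$ to vanish on $B(z,1)\cap D^c$). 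Applying Theorem~\ref{BHP} with this $D$, $z=0$, and noting $\delta_D(x) = \delta_{\H}(x) = x_d$ for $x$ near $0$ inside $D\cap\H$, and that the reference point $x_0$ with $\delta_D(x_0)=1$ can be taken to be $\mathbf 1$, gives $f(x)\approx f(\mathbf 1)\,x_d^{\alpha/2}$ for $x\in B(0,1/2)\cap\H$. To upgrade the conclusion from $B(0,1/2)$ to $B(0,1)$ one covers $B(0,1)\cap\H$ by finitely many balls and applies the same argument at nearby boundary points, or equivalently uses a Harnack chain argument together with the BHP on slightly translated copies of $D$; this enlargement is routine but needs to be written carefully.

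For the second statement I would use the scaling property of the relativistic process, equation~\pref{scaling}, together with the observation that $B(0,R)$ near its boundary point $z_0=(0,\dots,0,R)$ looks, after rescaling by $m\mapsto$ an appropriate parameter, like a $C^{1,1}$ set of bounded diameter. Concretely, Theorem~\ref{BHP} as stated is for the process with $m=1$; the ball $D = B(0,R)$ with $R\ge 2$ has $\operatorname{diam} D = 2R$ which may be large, so Theorem~\ref{BHP} does not apply directly. Instead I would localize: the relevant portion is $F = B(z_0,2)\cap D$, and for the behaviour of a regular harmonic function vanishing outside $D$ on the smaller ball $B(z_0,1)\cap D$, only the geometry of $\partial D$ within distance $1$ of $z_0$ matters, where $\rho_0$ of that local piece is still $\ge 1$ (since $R\ge 2$ guarantees the interior and exterior tangent balls of radius $1$ at $z_0$ fit). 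The clean way is to apply Theorem~\ref{BHP} not to $D=B(0,R)$ but to an auxiliary bounded $C^{1,1}$ set $\widetilde D$ with $\operatorname{diam}\widetilde D\le 4$, $\rho_0(\widetilde D)\ge 1$, that coincides with $B(0,R)$ inside $B(z_0,2)$ — for instance $\widetilde D = B(x_0,1)$ where $x_0 = (0,\dots,0,R-1)$, which is the interior tangent ball of radius $1$ at $z_0$. Then $\delta_{\widetilde D}(x) = 1 - |x - x_0|$ and one checks $\delta_{\widetilde D}(x)\approx R - |x|$ for $x\in B(z_0,1)\cap D$ (both are comparable to the distance to $\partial B(0,R)$, with constants depending only on the fixed radii, uniformly in $R\ge 2$ by convexity estimates). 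The same inclusion/monotonicity argument as before — $f$ regular harmonic on $F=B(z_0,2)\cap D$ and vanishing on $D^c$ implies $f$ is appropriately harmonic near $z_0$ on $\widetilde D\cap D = \widetilde D$ and vanishes on $\widetilde D^c\cap B(z_0,1)\subset D^c$ — lets us invoke Theorem~\ref{BHP} on $\widetilde D$ with $z=z_0$, $x_0$ as reference point, yielding $f(x)\approx f(x_0)\,\delta_{\widetilde D}(x)^{\alpha/2}\approx f(x_0)(R-|x|)^{\alpha/2}$.

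The main obstacle, in both parts, is the bookkeeping in the monotonicity/restriction step: Theorem~\ref{BHP} requires a function that is regular harmonic on the full $C^{1,1}$ set and vanishes on a full neighbourhood $B(z,1)\cap D^c$ of the boundary piece, whereas the hypothesis here only gives $f$ regular harmonic on the lens-shaped region $F$ and vanishing on $\H^c$ (resp. $D^c$). Bridging this requires (i) extending/interpreting $f$ as a function on $\Rd$ — it is already defined on $\Rd$ since regular harmonic functions are — and (ii) verifying that the restriction of $f$ is regular harmonic on the auxiliary set and that its boundary values outside the auxiliary set but inside the original domain do not spoil the estimate. For step (ii) one writes $f(x) = E^x[\tau_F<\infty; f(X_{\tau_F})]$, decomposes the exit according to whether $X_{\tau_F}\in\H^c$ (contributes $0$) or $X_{\tau_F}\in D\cap\H$ with $|X_{\tau_F}|\ge \sqrt 2$ (the curved part of $\partial F$), and shows this representation is consistent with $f$ being regular harmonic on the auxiliary ball $\widetilde D$; this uses the strong Markov property exactly as in the derivation of~\pref{greenpot100}. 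The geometric comparisons $\delta_{\widetilde D}(x)\approx x_d$ resp. $\delta_{\widetilde D}(x)\approx R-|x|$, and the passage from $B(z,1/2)$ to $B(z,1)$ in the conclusion, are then straightforward and uniform in $R$.
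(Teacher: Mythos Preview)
The paper does not supply a proof; it only remarks that the lemma ``can be easily deduced from Theorem~\ref{BHP}''. Your reduction strategy is the intended one, but the specific auxiliary domains you choose --- the interior tangent balls $B(\mathbf 1,1)$ and $B(x_0,1)$ --- do not work.

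The problem is that a tangent ball meets the original boundary only at a single point, so neither the distance functions nor the vanishing hypothesis match. Concretely, in the second part with $\widetilde D=B(x_0,1)$: your claim $\delta_{\widetilde D}(x)\approx R-|x|$ on $B(z_0,1)\cap D$ is false. Take $d=2$, $R$ large, and $x=(0.7,\,R-0.3)$; then $x\in B(z_0,1)\cap B(0,R)$, one has $R-|x|\approx 0.3$, but $1-|x-x_0|=1-0.7\sqrt 2\approx 0.01$, and the ratio blows up as $x$ approaches $\partial B(x_0,1)\setminus\{z_0\}$. Your containment $\widetilde D^c\cap B(z_0,1)\subset D^c$ is also false: the point $w=(\epsilon,0,\dots,0,R-\delta)$ with $2\delta<\epsilon^2<2R\delta$ (both small) lies in $B(z_0,1)\cap B(0,R)$ but outside $B(x_0,1)$, and $f$ need not vanish there, so the hypothesis of Theorem~\ref{BHP} is not satisfied. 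The first part has the same defect: $B(\mathbf 1,1)$ touches $\partial\H$ only at the origin, so $\delta_{B(\mathbf 1,1)}(x)\ne x_d$ off the $x_d$-axis, $B(0,1)\cap\H\not\subset B(\mathbf 1,1)$, and (as you note yourself) $B(\mathbf 1,1)\not\subset F$.

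What is needed is an auxiliary $C^{1,1}$ domain $\widetilde D\subset F$ whose boundary \emph{coincides} with $\partial\H$ (respectively $\partial B(0,R)$) on all of $B(0,1)$ (respectively $B(z_0,1)$), not merely touches at one point. One gets such a $\widetilde D$ by rounding off the corners of $F$ itself; since $R\ge 2$ all curvatures involved are at most $1$, and the smoothed set has $\textrm{diam}\le 4$ with $\rho_0$ bounded below by an absolute constant. One then invokes Theorem~\ref{BHP} --- or rather the estimate \pref{comparisonR} from \cite{Ry}, \cite{GRy} underlying it, whose constants depend only on $\gamma=\textrm{diam}/\rho_0$ and an upper bound on the diameter. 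With this choice of $\widetilde D$ the domain-monotonicity step, the identity $\delta_{\widetilde D}(x)=x_d$ (respectively $=R-|x|$) near the boundary, and the vanishing condition are all immediate.
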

As mentioned above, the one-dimensional  case for intervals was
treated recently  in \cite{GRy} and since we will need it in the
next section we present it in a convenient form of the estimate of
the Poisson kernel.  Actually in \cite{GRy} it was shown that the
Green function of $(0,R)$ is comparable with
     the Green function of the corresponding stable process (with uniform constant for $R\le 3$). By standard arguments
      (see \cite{Ry}) this implies the  lemma below.
\begin{lem}    \label{GrzywnyRyznar} Assume that $d=1$ and $0<R\le 3$. Let $D=(0,R)$.  Then
     $$P_D ( x, z)\approx  \frac {(x(R-x))^{\alpha/2}}{(R(z-R))^{\alpha/2}(z-x)}\ e^{-z}
     , \quad x\in D,\ z>R.$$
     This implies that
    $$  P^x ( X_{\tau_D}>R)\approx (x/R)^{\alpha/2}, \quad x\in D$$
  and  
    $$ E^x [X_{\tau_D}>R;X_{\tau_D}]\approx (x/R)^{\alpha/2}((R-x)^{\alpha/2}+x),\quad x\in D.$$
    We also have that $$E^x\tau_D \approx {(x(R-x))^{\alpha/2}},\quad x\in D.$$
    \end{lem}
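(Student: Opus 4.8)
The plan is to derive the Poisson kernel estimate from the Ikeda--Watanabe formula, feeding in the comparability of the relativistic and stable Green functions of $(0,R)$ recorded in \cite{GRy}, and then to read off the remaining three statements by integration. Since $X_t$ can leave $D=(0,R)$ only by a jump across $\{0,R\}$ there is no boundary term, and for $z>R$
$$P_D(x,z)=\int_0^R G_D(x,y)\,\nu(z-y)\,dy .$$
By \cite{GRy}, $G_D(x,y)\approx G_D^{stable}(x,y)$ on $D\times D$ with constants uniform in $R\le 3$, where $G_D^{stable}$ is the isotropic $\alpha$-stable Green function of the interval; for it the classical explicit formula is available, giving both sharp two-sided bounds and the identity $\int_0^R G_D^{stable}(x,y)\,dy=E^x_{stable}\tau_D=c\,(x(R-x))^{\alpha/2}$. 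Combined with $E^x\tau_D=\int_0^R G_D(x,y)\,dy$ and the comparison this already yields the last assertion $E^x\tau_D\approx (x(R-x))^{\alpha/2}$.

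From \pref{levymeasure} together with \pref{asympt0} and \pref{asympt_infty} one obtains, in dimension one, $\nu(w)\approx |w|^{-1-\alpha}$ for $0<|w|\le 1$ and $\nu(w)\approx |w|^{-1-\alpha/2}e^{-|w|}$ for $|w|\ge 1$. Inserting the Green function comparison into the displayed formula reduces the Poisson kernel estimate to bounding $\int_0^R G_D^{stable}(x,y)\,\nu(z-y)\,dy$ above and below, which I would split according to the position of $z$. If $R<z\le R+1$, then $z-y$ lies in $[z-R,\,R+1]\subseteq(0,4]$, so $\nu(z-y)\approx\nu^{stable}(z-y)$ with absolute constants and therefore $P_D(x,z)\approx P_D^{stable}(x,z)$; one then inserts the classical explicit Poisson kernel of an interval for the stable process. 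If $z>R+1$, then $z-y\ge z-R>1$ for every $y\in(0,R)$, so $\nu(z-y)\approx(z-y)^{-1-\alpha/2}e^{-(z-y)}$; writing $e^{-(z-y)}=e^{-z}e^{y}$ with $1\le e^{y}\le e^{3}$ one extracts the factor $e^{-z}$ and is left to estimate $\int_0^R G_D^{stable}(x,y)(z-y)^{-1-\alpha/2}\,dy$, which one treats by separating the range $y$ close to $R$ (where $z-y\approx z-R$ and $G_D^{stable}(x,\cdot)$ carries an extra factor $(R-y)^{\alpha/2}$) from the rest (where $z-y\approx z$). Comparing the outcome of both cases with the claimed expression finishes the Poisson kernel estimate; this is the bulk of the work.

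The two exit-position statements then follow by integrating the Poisson kernel: $P^x(X_{\tau_D}>R)=\int_R^\infty P_D(x,z)\,dz$ and $E^x[X_{\tau_D}>R;X_{\tau_D}]=\int_R^\infty z\,P_D(x,z)\,dz$. Substituting the estimate and setting $u=z-R$, both become beta-type integrals dominated by the region $z$ near $R$: $e^{-R}\int_0^\infty e^{-u}u^{-\alpha/2}(u+R-x)^{-1}\,du\approx (R-x)^{-\alpha/2}$ and its first moment, which yield $(x/R)^{\alpha/2}$ and $(x/R)^{\alpha/2}((R-x)^{\alpha/2}+x)$ respectively. (These two can also be obtained without the pointwise kernel by interchanging integrations, e.g.\ $P^x(X_{\tau_D}>R)=\int_0^R G_D(x,y)N(R-y)\,dy$ with $N(t)=\int_t^\infty\nu(s)\,ds$, using $N(t)\approx N^{stable}(t)$ on $(0,R]$ for $R\le 3$ and the stable computation.) The main obstacle is the bookkeeping for $z>R+1$: keeping the constants uniform in $R\le 3$, correctly peeling off $e^{-z}$, and matching the behaviour at the endpoint $y=R$, where the size of $\nu(z-y)$ is balanced by the $(R-y)^{\alpha/2}$ decay of $G_D^{stable}(x,\cdot)$; everything else is routine once the explicit stable estimates for an interval are in hand.
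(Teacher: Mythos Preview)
Your approach is exactly what the paper indicates: the paper does not give a detailed proof of this lemma but states that it follows from the comparability $G_{(0,R)}\approx G^{stable}_{(0,R)}$ (uniform for $R\le 3$) proved in \cite{GRy} ``by standard arguments (see \cite{Ry})''. Your proposal spells out precisely those standard arguments---Ikeda--Watanabe plus the Green function comparison and the L\'evy measure asymptotics, followed by integration for the three consequences---so there is nothing to add on the level of strategy.

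One point to watch when you carry out the near-range case $R<z\le R+1$: you correctly note that $\nu\approx\nu^{stable}$ there, hence $P_D\approx P_D^{stable}$, and the explicit stable Poisson kernel for $(0,R)$ has $(z(z-R))^{\alpha/2}$ in the denominator, not $(R(z-R))^{\alpha/2}$. For $R$ bounded away from $0$ these agree up to constants on the relevant range of $z$, but for very small $R$ and $z$ of order $1$ they do not; the displayed kernel estimate in the lemma should be read with $z$ in place of $R$ (or equivalently, restricted to $z$ comparable to $R$). This does not affect the three corollaries, since those integrals are dominated by $z$ close to $R$, and it does not affect any later use of the lemma in the paper. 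Just be aware of it when you ``compare the outcome of both cases with the claimed expression''.
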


 Obtaining any exact formulas for the Green function or the Poisson kernel even for regular sets
 seems to be a very hard task but  in the recent paper \cite{ByczRyzMal} the formulas for  the 1-Poisson
 and 1-Green function of $\H$  were described explicitly in terms of the Macdonald functions:

 \begin{thm} \label{rel1Poisson}
 Let
 $$E^x[ e^{-\tau_\H},X_{\tau_\H}\in du]=P_{\H}^1(x,u)\,$$
be the 1-Poisson kernel for $\H$. Then we have
 $$
 P_{\H}^1(x,u)=2\frac {\sin (\alpha \pi/2) }{\pi} (2\pi)^{-d/2}
 \left(\frac{x_d}{ -u_d} \right)^{\alpha/2} \,
\frac{ K_{d/2}(|x-u|)}{ |x-u|^{d/2}},
 $$
 where $u_d<0<x_d$. Let  $G_{\H}^1(x,y)$ be the 1-Green function for $\H$
 then for $x,y\in \H$,
$$
 G_{\H}^1(x,y) = \frac{2^{1-\alpha} |x-y|^{\alpha-d/2}} { (2\pi)^{d/2} \Gamma(\alpha/2)^2}
 \int_0^{\frac{4 x_d y_d}{ |x-y|^2} } \frac{t^{\frac{\alpha}{ 2} -1}}{ (t+1)^{d/4}}
  K_{d/2}(|x-y|(t+1)^{1/2})  dt.
 $$
 Moreover,

 \begin{eqnarray}\label{intgral}
 \int_{\H}G_{\H}^1(x,y)dy&= &1-E^x e^{-\tau_\H}\nonumber\\&=& \frac{ 1}{ {\Gamma(\alpha/2)}} \int_0^{x_d}t^{\alpha/2-1} e^{-t}\,dt\,, \ x\in \H\,.
  \end{eqnarray}

  \end{thm}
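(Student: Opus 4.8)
\emph{Plan of the proof.} The two kernels are linked: once the $1$-Poisson kernel $P^1_\H$ is known, the formula for $G^1_\H$ comes from the identity $G^1_\H(x,y)=U_1(x-y)-E^x[e^{-\tau_\H}\,U_1(X_{\tau_\H}-y)]$ recorded above, which — since for $0<\alpha<2$ the process is pure jump and (as the non--creeping of the nondegenerate one--dimensional symmetric L\'evy coordinate $X^{(d)}$ shows) exits $\H$ into the open set $\{u_d<0\}$ — reads
\[ G^1_\H(x,y)=U_1(x-y)-\int_{\{u_d<0\}}P^1_\H(x,u)\,U_1(u-y)\,du , \]
and \pref{intgral} then follows by integrating $G^1_\H(x,\cdot)$ over $\H$. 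So there are three stages: determine $P^1_\H$; carry out the integral above; integrate the result over $\H$.

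\emph{Stage 1: the Poisson kernel.} The route I would take is to write down the claimed closed form and verify it. Using the integral representation $K_\nu(r)=2^{-1-\nu}r^\nu\int_0^\infty e^{-s}e^{-r^2/4s}s^{-1-\nu}\,ds$ one checks, by Fubini, that for $f\in C_c(\{u_d<0\})$ the function $u(x):=\int_{\{u_d<0\}}P^1_\H(x,u')f(u')\,du'$ ($x\in\H$), $u|_{\{u_d<0\}}:=f$, is bounded and has the mean--value property $u(x)=E^x[e^{-\tau_B}u(X_{\tau_B})]$ for every ball $B$ with $\overline B\subset\H$ (equivalently, is $1$--harmonic for $X$ on $\H$ with exterior data $f$); uniqueness of bounded solutions of this exterior problem — from the maximum principle, or from the probabilistic representation together with $E^xe^{-\tau_\H}<1$ for $x\in\H$ — then identifies $P^1_\H$ as the $1$--Poisson kernel. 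A more conceptual but more delicate alternative is to pass through the isotropic $\alpha$--stable process $Y=B_{S_t}$, where $S$ is the strictly $\alpha/2$--stable subordinator: the Esscher relation with density martingale $M_t=e^te^{-S_t}$ turns $S$ into the relativistic clock $T_\alpha$ (compare the Laplace transforms in \pref{subord2}), so that $E^x[e^{-\tau_\H};X_{\tau_\H}\in du]=E^x[e^{-S_\tau};Y_\tau\in du]$ with $\tau=\tau^Y_\H$; one then evaluates the right--hand side from the classical half--space kernels of $Y$ together with a L\'evy--system decomposition of the exit jump, the integral $\int_0^\infty e^{-r}g_r(u-y)\,\mu^S(dr)=\nu(u-y)$ of \pref{levymeasure0} being what produces the Macdonald function $K_{d/2}$.

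\emph{Stage 2: the Green function.} With $P^1_\H$ known and $U_1(z)=C(\alpha,d)\,K_{(d-\alpha)/2}(|z|)\,|z|^{-(d-\alpha)/2}$, I would insert the integral representation of $K_\nu$ displayed above into \emph{both} Macdonald functions in $\int_{\{u_d<0\}}P^1_\H(x,u)U_1(u-y)\,du$, turning it into a Gaussian integral in $u$. Integrating out the $d-1$ coordinates of $u$ along $\partial\H$ (a pure Gaussian integral), then integrating $u_d$ over $(-\infty,0)$ — which brings in the factor $x_dy_d$ and, after the natural substitution, the finite upper limit $4x_dy_d/|x-y|^2$ — and finally recollecting powers of $|x-y|$ and of $1+t$, one recognizes the remaining $s$--integral as $K_{d/2}(|x-y|(1+t)^{1/2})$ and lands on the stated formula; the constant $2^{1-\alpha}/((2\pi)^{d/2}\Gamma(\alpha/2)^2)$ is fixed by matching. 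This reduction of a $d$--fold integral to the single integral in the statement is the main computational obstacle.

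\emph{Stage 3: the normalization \pref{intgral}.} By Fubini, $\int_\H G^1_\H(x,y)\,dy=E^x\!\int_0^{\tau_\H}e^{-t}\,dt=1-E^xe^{-\tau_\H}$, so it remains to compute $E^xe^{-\tau_\H}$. Since the last coordinate $X^{(d)}_t=B^{(d)}_{T_\alpha(t)}$ is itself a one--dimensional relativistic $\alpha$--stable process, $\tau_\H$ depends only on $x_d$ and $\psi(s):=1-E^s e^{-\tau_{(0,\infty)}}$ is the bounded solution of $(I-\partial_s^2)^{\alpha/2}\psi=1$ on $(0,\infty)$ with $\psi\equiv0$ on $(-\infty,0]$; one checks directly that $\psi(s)=\Gamma(\alpha/2)^{-1}\int_0^s t^{\alpha/2-1}e^{-t}\,dt$ has these properties (its derivative is the Gamma$(\alpha/2,1)$ density and $\psi(\infty)=1$), and uniqueness is again the maximum principle. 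Equivalently one may integrate the formula of Stage 2 over $y\in\H$ and recognize the incomplete Gamma function. Either way, the genuine difficulty throughout is the bookkeeping of the Macdonald--function integrals, most acutely in Stage 2.
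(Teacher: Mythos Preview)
The paper does not prove Theorem~\ref{rel1Poisson}; it is quoted from \cite{ByczRyzMal}, so there is no in-paper argument to compare your proposal against.

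On its own terms your outline is sound but remains a sketch. The identity $G^1_\H(x,y)=U_1(x-y)-\int_{\{u_d<0\}} P^1_\H(x,u)\,U_1(u-y)\,du$, together with the explicit $U_1$, is the correct bridge from the Poisson kernel to the Green function, and unfolding both Macdonald factors via the Gaussian integral representation of $K_\nu$ is the natural computational route; Stage~3 is routine once Stage~1 is in hand (and is in fact easier than you make it: $1-E^xe^{-\tau_\H}=1-\int_{\{u_d<0\}}P^1_\H(x,u)\,du$ reduces by the explicit $P^1_\H$ to a one-variable integral that one recognizes as the normalized incomplete Gamma function, bypassing the fractional PDE entirely). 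The Esscher remark in Stage~1 is correct in principle---with $M_t=e^{t}e^{-S_t}$ optional stopping gives $E^x[e^{-\tau_\H};X_{\tau_\H}\in du]=E^{x}_{\mathrm{stable}}[e^{-S_\tau};Y_\tau\in du]$---but the ``L\'evy system'' evaluation you gesture at is exactly where the work hides, and you do not carry it out; the direct verification alternative you list first is the safer path. What is missing throughout is the execution: Stage~2 in particular is a substantial multivariate calculation (the reduction of the $d$-fold integral to the single $t$-integral with upper limit $4x_dy_d/|x-y|^2$), and the proposal only names the steps without performing them.
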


  This result will be very useful in our analysis since, as shown in \cite{ByczRyzMal} the behaviour of the  Green function  $G_{\H}(x,y)$ could
  be described  in terms of the 1-Green function $G_{\H}^1(x,y)$   when $x$ and $y$ are close enough.

One of our main tools in establishing the upper bounds of the
Green function will be  estimates for the tail function
$P^x(\tau_\H> t)$. We start with the following lemma
     taken from the Master Thesis of the first author \cite{G}.
     \begin{lem}    \label{Grzywny}
     There is a constant C such that
\begin{equation} \label{Grzywny1}P^x(\tau_\H> t)\le
 C \frac{x_d + \ln (t+1)}{t^{1/2}}\,,\quad t\ge 1\,,\, x_d>0.\end{equation}
\end{lem}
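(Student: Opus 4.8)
The plan is to pass to dimension one, read off a (time-uniform) a priori bound from Theorem~\ref{rel1Poisson}, and then produce the decay rate $t^{-1/2}$ by a martingale/optional-stopping argument combined with heat-kernel tail estimates. Here $m=1$ is the standing assumption. Since $X_t=B_{T_\alpha(t)}$ with $B$ a $d$-dimensional Brownian motion \pref{brownian} and $T_\alpha$ a one-dimensional subordinator, the coordinate process $Y_t:=(X_t)_d$ is itself the one-dimensional relativistic $\alpha$-stable process with parameter $1$ (the characteristic functions match), and $\tau_\H=\inf\{s:(X_s)_d\le0\}=\tau^{Y}_{(0,\infty)}$. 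So it suffices to prove, writing $\tau:=\tau_{(0,\infty)}$, that $P^x(\tau>t)\le C(x+\ln(t+1))/\sqrt t$ for $t\ge1$, $x>0$. The one-dimensional process is recurrent, so $\tau<\infty$ a.s.

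First I would record an a priori bound. By Theorem~\ref{rel1Poisson} and $\int_\H G_\H^1(x,y)\,dy=\int_0^\infty e^{-s}P^x(\tau>s)\,ds$,
\[
\int_0^\infty e^{-s}P^x(\tau>s)\,ds\;=\;1-E^x e^{-\tau}\;=\;\frac{1}{\Gamma(\alpha/2)}\int_0^x t^{\alpha/2-1}e^{-t}\,dt\;\le\;c\,(x^{\alpha/2}\wedge1),
\]
and since $s\mapsto P^x(\tau>s)$ is nonincreasing, $(1-e^{-1})P^x(\tau>t)\le\int_0^t e^{-s}P^x(\tau>s)\,ds$, so $P^x(\tau>t)\le C_1(x^{\alpha/2}\wedge1)$ for all $t\ge1$. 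Next, the martingale input: $Y$ has mean zero and $EY_s^2=\alpha s$ (because $ET_\alpha(s)=\tfrac\alpha2 s$), hence $(Y_{s\wedge\tau})_s$ is a martingale and optional stopping at time $t$ gives $x=E^x[\tau\le t;Y_\tau]+E^x[\tau>t;Y_t]$. On $\{\tau\le t\}$ one has $Y_\tau\le0$; conditionally on $Y_{\tau-}$ the downward overshoot obeys $P(-Y_\tau>r\mid Y_{\tau-})=\bar\nu(Y_{\tau-}+r)/\bar\nu(Y_{\tau-})$ with $\bar\nu(s):=\nu((s,\infty))$, and by \pref{asympt0}--\pref{asympt_infty} (in $d=1$) $\bar\nu(s)\approx s^{-\alpha}$ for $s\le1$ and $\bar\nu(s)\approx s^{-1-\alpha/2}e^{-s}$ for $s\ge1$, so $\bar\nu(y)^{-1}\int_y^\infty\bar\nu(s)\,ds\le C_0$ uniformly in $y>0$, giving $\sup_{x>0}E^x[-Y_\tau]\le C_0$. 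Therefore $E^x[\tau>t;Y_t]\le x+C_0$ for all $x>0$, $t>0$, and in particular $P^x(\tau>t,\,Y_t>M)\le(x+C_0)/M$ for every $M>0$. (The exponential decay of $\nu$ at infinity is exactly what makes $-Y_\tau$ integrable.)

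The $t$-decay, and the logarithm, come from the transition-density tails. By Lemma~\ref{transden2}, $p_s(u)\le c\bigl(g_s(u/\sqrt2)+s\,\nu(u/\sqrt2)\bigr)$; using $\bar\nu(R)\le ce^{-R}$ for $R\ge1$, one gets, for $R\ge\sqrt s$,
\[
P^x(Y_s>x+R)\;\le\;P^0(Y_s>R)\;\le\;c\,e^{-R^2/(8s)}+c\,s\,e^{-R/2},
\]
and with $R=\ell(s):=A\sqrt{s\ln(s+1)}$ ($A$ large) the right-hand side is $\le s^{-1}$ --- this is where $\ln$ enters. For $t\ge1$, splitting at the level $x+\ell(t)$ and using the Markov property at time $t$,
\[
P^x(\tau>2t)=E^x\!\left[\tau>t;\,P^{Y_t}(\tau>t)\right]\le t^{-1}+E^x\!\left[\tau>t,\ Y_t\le x+\ell(t);\,P^{Y_t}(\tau>t)\right],
\]
and inserting the desired bound for $P^z(\tau>t)$ (as an induction hypothesis) and then the martingale estimate, the last expectation is $\le C\,t^{-1/2}\bigl(x+C_0+\ln(t+1)\,P^x(\tau>t)\bigr)$. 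Feeding the a priori bound (or the inductive bound once more) into the factor $P^x(\tau>t)$ yields a recursion in $t$ which, handled with care and supplemented by the a priori bound on a bounded range $1\le t\le t_0$, gives the assertion.

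The main obstacle is precisely this last step --- closing the bootstrap. The naive doubling $t\mapsto2t$ as written loses a constant factor in the coefficient of $x$: the martingale relation is an \emph{equality}, $E^x[\tau>t;Y_t]=x+E^x[\tau\le t;-Y_\tau]$, with a genuinely positive overshoot term, so a straightforward iteration does not close. One therefore has to be more careful --- e.g.\ track the two contributions $x/\sqrt t$ and $\ln(t+1)/\sqrt t$ separately, iterate on a finer time scale, or strengthen the a priori bound to a near-boundary estimate on $P^x(\tau>t,\,Y_t\ \text{small})$, equivalently a bound on the killed density $p^{(0,\infty)}_t(x,y)$ for $y$ away from the boundary that captures the passage from the stable-like local regime to the Brownian-like global regime. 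Controlling this near-boundary behaviour is the heart of the argument; the logarithm is the (possibly non-sharp) price paid by this route and is refined later in the paper.
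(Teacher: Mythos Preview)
Your proposal does not close, and you say so yourself. The obstacle is structural: the martingale identity forces $E^x[\tau>t;Y_t]=x+E^x[\tau\le t;-Y_\tau]\ge x$, so plugging an inductive bound $P^z(\tau>t)\le C(z+\ln(t+1))/\sqrt t$ into $P^x(\tau>2t)=E^x[\tau>t;P^{Y_t}(\tau>t)]$ returns a leading term $Cx/\sqrt t$ on the right, while the target at time $2t$ is $Cx/\sqrt{2t}$. The $x$-coefficient inflates by a factor $\sqrt2$ at each doubling and the recursion degenerates to the trivial bound. Separating the $x$- and $\ln$-contributions does not help, because the overshoot constant $C_0$ and the $t^{-1}$ error feed the additive slot and are then multiplied by the growing $x$-coefficient at the next step. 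The fixes you list are not lighter than the lemma: controlling $p_t^{(0,\infty)}(x,y)$ near the boundary is essentially Corollary~\ref{density1A}, which in the paper is a \emph{consequence} of the present lemma via Theorem~\ref{optimaltail}. Note also that your truncation level $\ell(t)=A\sqrt{t\ln(t+1)}$ is of order $\sqrt{t\ln t}$, not $\ln t$, so it is not where the logarithm in the statement would come from in your scheme.

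The paper avoids iteration entirely and the argument is short. After the same reduction to $d=1$ and the rewriting $P^x(\tau>t)=P^0(\sup_{s\le t}Y_s<x)$, one applies a L\'evy-type maximal inequality for processes with independent increments: for every $\varepsilon>0$,
\[
P^0\Bigl(\sup_{s\le t}Y_s\ge y\Bigr)\ \ge\ 2P^0(Y_t\ge y+2\varepsilon)-2t\,\bar\nu(\varepsilon),\qquad \bar\nu(\varepsilon)=\int_\varepsilon^\infty\nu(u)\,du,
\]
hence $P^x(\tau>t)\le P^0(|Y_t|<x+2\varepsilon)+2t\,\bar\nu(\varepsilon)$. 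The first term is at most $C(x+2\varepsilon)t^{-1/2}$ by the density bound of Lemma~\ref{transden_0}; the second is at most $Ct\,e^{-\varepsilon}$ for $\varepsilon\ge1$ by the exponential tail of $\nu$. Choosing $\varepsilon=\tfrac32\ln(t+1)$ gives the claim in one step. The logarithm is thus the price paid to kill the L\'evy-tail correction $t\,\bar\nu(\varepsilon)$, not a bootstrap residue; this is also why it can be removed later in Theorem~\ref{optimaltail}.
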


\begin{proof}
Let $Y_t=X^{(d)}_t$, where $X_t=(X^{(1)}_t,\ldots,X^{(d)}_t)$. By
the symmetry of the random variable $Y_t$ we obtain
\begin{eqnarray*}P^x (\tau_\H>t)&=& P^x (\inf_{s\le t}Y_s >0)\\&=&  P^0
(\inf_{s\le t}(-Y_s+x_d) >0)= P^0 (\sup_{s\le t}Y_s <
x_d).\end{eqnarray*} Using a version of the L\'evy inequality
(\cite{B}, Ch.7, 37.9) we have for any
 $\varepsilon,y>0$ that
$$
2 P^0 (Y_t\ge
y+2\varepsilon)-2\sum^n_{k=1}P^0(Y_{\frac{tk}{n}}-Y_{\frac{t(k-1)}{n}}\ge
\varepsilon) \le P^0 (\sup_{k\le n}Y_{\frac{tk}{n}} \ge y).
$$
Note that
$\sum^n_{k=1}P^0(Y_{\frac{tk}{n}}-Y_{\frac{t(k-1)}{n}}\ge
\varepsilon)=nP^0(Y_{\frac{t}{n}}\ge \varepsilon)\to
t\int^{\infty}_\varepsilon\nu(x)dx$, hence, by symmetry again
\begin{eqnarray*} P^0
(\sup_{s\le t}Y_s \ge y)&\ge& 2 P^0 (Y_t\ge
y+2\varepsilon)-2t\int^{\infty}_{\varepsilon}\nu(x)dx\\&=&P^0
(|Y_t|\ge y+2\varepsilon)-2t\int^{\infty}_{\varepsilon}\nu(x)dx
.\end{eqnarray*}  This implies that
$$P^x (\tau_\H>t)=P^0 (\sup_{s\le t}Y_s < x_d)\le P^0 (|Y_t| <x_d+2\varepsilon)
+2t\int^{\infty}_{\varepsilon}\nu(x)dx\,. $$
 For $\varepsilon\ge
1$ we obtain from \pref{levymeasure} and \pref{asympt_infty}
$$\int^{\infty}_{\varepsilon}\nu(x)dx\le C e^{-\varepsilon}\varepsilon^{-\alpha/2-1}.$$
 Lemma \ref{transden_0} implies
 that the density of $Y(t)$ is bounded by $Ct^{-1/2}$, $t\ge 1$, hence taking
$\varepsilon=\frac{3}{2}\ln (t+1)$ we obtain
$$P^x (\tau_\H>t)\le
 C \left(x_d+\ln (t+1)\right) t^{-1/2}.$$
\end{proof}

    In order to improve the above estimate for  $x$ close to the boundary we use Lemma \ref{GrzywnyRyznar}.
    \begin{lem}  \label{Grzywnyimprove}
     For $0<x_d<2$ we have
     \begin{equation}\label{uboundtail}
     P^x(\tau_\H> t)\le C x_d^{\alpha/2}\;\ln (t+1)/t^{1/2},\quad t\ge 2,
     \end{equation}
     where $C$ is a constant.
     \end{lem}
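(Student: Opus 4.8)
The plan is to upgrade Lemma~\ref{Grzywny} for $x$ close to the boundary by decomposing the path according to whether it ever climbs to height roughly $1$ (or some fixed moderate level) before time $t$. Concretely, set $D = B(0,\sqrt 2)\cap\H$ (or an analogous small half-ball near the starting point), and let $\sigma = \tau_D$. On the event $\{\tau_\H > t\}$ either the process exits $D$ through the ``top'' part of $\partial D$ inside $\H$ before time $t$ — reaching height $\approx 1$ — or it exits $\H$ directly (which contradicts $\tau_\H>t$ if this happens before $t$) or it is still in $D$ at time $t$. So
$$
P^x(\tau_\H > t) \le P^x\bigl(X_{\tau_D}\in\H,\ \tau_D\le t\bigr)\,\sup_{y}P^y(\tau_\H > t - s)\big|_{\cdots} + P^x(\tau_D > t),
$$
and I would make this precise via the strong Markov property at time $\tau_D$, exactly as in the manipulation \pref{greenpot100}: on $\{\tau_D<\tau_\H\}$,
$$
P^x(\tau_\H>t) = E^x\bigl[\tau_D < \tau_\H;\ P^{X_{\tau_D}}(\tau_\H > t-\tau_D)\bigr] \le P^x(X_{\tau_D}\in\H)\cdot \sup_{t'\le t} \sup_{y} \frac{P^y(\tau_\H>t')}{\text{(height term)}}\cdots
$$
The key inputs are: (i) from Lemma~\ref{GrzywnyRyznar} (read in the appropriate one-dimensional projected form, since $\tau_\H$ depends only on the last coordinate $Y_t = X^{(d)}_t$, whose law is the one-dimensional relativistic process), the probability that $Y$ started at $x_d<2$ ever reaches level $\approx 2$ before leaving $(0,\infty)$ is $\approx (x_d/2)^{\alpha/2}\approx x_d^{\alpha/2}$; more precisely $P^{x}(X^{(d)}_{\tau_{(0,\infty)}} > 2)\approx x_d^{\alpha/2}$, so that the harmonic-measure mass of the ``high exit'' event is $\le C x_d^{\alpha/2}$; and (ii) from Lemma~\ref{Grzywny}, for a starting point at height $\le$ some fixed constant, $P^y(\tau_\H > t') \le C \ln(t'+1)/(t')^{1/2}$ for $t'\ge 1$, since then $y_d + \ln(t'+1)\le C\ln(t'+1)$.

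Carrying this out: I first reduce to the one-dimensional last coordinate, $P^x(\tau_\H>t) = P^{x_d}(\tau_{(0,\infty)}>t)$ for the projected relativistic process $Y$. Then I apply the strong Markov property at $\tau_{(0,2)}$ (the exit from the interval $(0,2)$ by $Y$). On $\{Y_{\tau_{(0,2)}} \le 0\}$ we already have $\tau_{(0,\infty)} = \tau_{(0,2)} \le t$ unless $\tau_{(0,2)}>t$; on $\{Y_{\tau_{(0,2)}} > 2\}$, which by Lemma~\ref{GrzywnyRyznar} has $P^{x_d}$-probability $\approx x_d^{\alpha/2}$, I restart from a point at height $>2$ but — and here is where a little care is needed — possibly much higher than $2$. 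To control the restarted tail uniformly I would either (a) use the crude bound $P^y(\tau_\H>s)\le 1$ together with $P^{x_d}(\text{exit }(0,2)\text{ high, and }\tau_{(0,2)}\le t)$ and then a further iteration, or, more cleanly, (b) observe that for the restarted process the relevant bound from Lemma~\ref{Grzywny} is $P^{y_d}(\tau_\H>t-s) \le C(y_d+\ln(t+1))/\sqrt{t}$, and absorb the overshoot $y_d$ by noting that the relativistic Lévy measure has exponentially small tails (cf.\ \pref{levymeasure}, \pref{asympt_infty}), so $E^{x_d}[Y_{\tau_{(0,2)}};\,Y_{\tau_{(0,2)}}>2]$ is of the same order $x_d^{\alpha/2}$ as the probability itself — which is exactly the content of the third displayed estimate in Lemma~\ref{GrzywnyRyznar} (with $R=2$). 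Thus
$$
E^{x_d}\bigl[Y_{\tau_{(0,2)}}>2;\ (Y_{\tau_{(0,2)}} + \ln(t+1))\bigr] \le C x_d^{\alpha/2}\bigl(1 + \ln(t+1)\bigr) \le C x_d^{\alpha/2}\ln(t+1)
$$
for $t\ge 2$, and combined with $\sqrt{t-\tau_{(0,2)}} \ge c\sqrt t$ on a suitable sub-event (splitting off the part where $\tau_{(0,2)}>t/2$, which is handled by the first term), this yields the claimed $P^x(\tau_\H>t)\le C x_d^{\alpha/2}\ln(t+1)/\sqrt t$. Finally I must handle the term $P^{x_d}(\tau_{(0,2)}>t)$: since $(0,2)$ is bounded, $E^{x_d}\tau_{(0,2)} \approx (x_d(2-x_d))^{\alpha/2}\le C x_d^{\alpha/2}$ by the last line of Lemma~\ref{GrzywnyRyznar}, so by Markov's inequality $P^{x_d}(\tau_{(0,2)}>t)\le C x_d^{\alpha/2}/t \le C x_d^{\alpha/2}\ln(t+1)/\sqrt t$ for $t\ge 2$, which is even smaller than needed.

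The main obstacle I anticipate is the overshoot issue in the restarted process: after exiting $(0,2)$ through the top, the new starting height $Y_{\tau_{(0,2)}}$ is not bounded, and naively Lemma~\ref{Grzywny} then gives a tail bound proportional to $Y_{\tau_{(0,2)}}$, not to a constant. The resolution is to use the \emph{expectation} estimate $E^x[X_{\tau_D}>R; X_{\tau_D}]\approx (x/R)^{\alpha/2}((R-x)^{\alpha/2}+x)$ from Lemma~\ref{GrzywnyRyznar} rather than just the probability, exploiting that the relativistic jump kernel decays exponentially so overshoots have light tails; then integrating the bound of Lemma~\ref{Grzywny} against the exit distribution of $Y_{\tau_{(0,2)}}$ stays of order $x_d^{\alpha/2}$ up to the $\ln(t+1)$ factor. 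Everything else — the reduction to one dimension, the strong Markov splitting, and the elementary $t$-estimates — is routine.
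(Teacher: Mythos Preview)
Your proposal is correct and follows essentially the same route as the paper: reduce to the one-dimensional coordinate process, apply the strong Markov property at $\tau_{(0,2)}$, bound the restarted tail via Lemma~\ref{Grzywny}, and control both $P^{x_d}(X_{\tau_{(0,2)}}>2)$ and the overshoot $E^{x_d}[X_{\tau_{(0,2)}};X_{\tau_{(0,2)}}>2]$ using Lemma~\ref{GrzywnyRyznar}, while the remaining term $P^{x_d}(\tau_{(0,2)}>t)$ is handled by Markov's inequality. The paper avoids your ``$\tau_{(0,2)}>t/2$ vs.\ $\le t/2$'' split by the simple device of bounding $P^x(\tau_{(0,\infty)}>2t)$ instead of $P^x(\tau_{(0,\infty)}>t)$, so that on $\{\tau_{(0,2)}\le t\}$ the restarted process automatically has at least time $t$ remaining.
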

     \begin{proof}
     It is enough to prove the claim for $d=1$. Let $D=(0,2)$ and assume that $0<x<2$.

     By the Strong Markov Property and then by  Lemma \ref{Grzywny} we obtain for $t\ge 1$:
     \begin{eqnarray*}
     P^x(\tau_{(0,\infty)}> 2t)&\le& P^x(\tau_D> t, \tau_{(0,\infty)}> 2t)\\& &
     +\,
     E^x[\tau_D<\tau_{(0,\infty)}\,; P^{X_{\tau_D}}(\tau_{(0,\infty)}> t)]\\
     &\le& P^x(\tau_D> t)\\& &+\,
     CE^x [\tau_D<\tau_{(0,\infty)}; X_{\tau_D} + \ln (t+1)]/t^{1/2}\\
     &\le& \frac{E^x\tau_D}{t}+
     CE^x [X_{\tau_D}>2\,;X_{\tau_D}]/t^{1/2}\\
     & &+\, C\ln (t+1)\;P^x (X_{\tau_D}>2)/t^{1/2}\\
     &\le&C x^{\alpha/2}\ln (t+1)/t^{1/2}.
    \end{eqnarray*}
     The last inequality follows from Lemma \ref{GrzywnyRyznar}. The proof is complete.
     \end{proof}
     The estimates from Lemmas \ref{Grzywny}, \ref{Grzywnyimprove} will
be very useful for the estimates of the Green function of the
half-line,
 however they are not optimal. In the sequel we will be able to improve them to
  be sharp enough and optimal (see Proposition \ref{optimaltail}). This will have a great  importance
  in estimating the Green function for a half-space in the $d$-dimensional case.

\begin{lem}    \label{density}  There is a constant $C$ such that for any  open set $D$:
$$ E g_{T_{\alpha}(t)} ^D(x,y) \le p_t^{D}(x,y)
\le  C (t^{-d/2}+ t^{-d/\alpha}) P^x(\tau_D> t/3)P^y(\tau_D>
t/3),$$ where $g_t ^D(x,y)$ is  the transition probability for the
Brownian motion killed on exiting $D$.
 \end{lem}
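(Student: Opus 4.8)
The plan is to prove the two inequalities separately, both resting on the subordination representation $p^D_t(x,y) = E\, g^D_{T_\alpha(t)}(x,y)$, which follows because the killed process $X^D_t = B^D_{T_\alpha(t)}$ (the Brownian motion killed on exiting $D$, time-changed by the subordinator $T_\alpha(t)$, using that $T_\alpha$ is independent of $B$ and that exiting $D$ commutes with the deterministic-at-fixed-time subordination). For the lower bound, I would simply observe that $g^D_{\,\cdot}(x,y)\ge 0$ and that, conditionally on $T_\alpha(t)=u$, Jensen-type positivity is not even needed: since $g^D_u(x,y)$ is a genuine transition density one has $p^D_t(x,y)=E\,g^D_{T_\alpha(t)}(x,y)\ge E\,g^D_{T_\alpha(t)}(x,y)$ trivially — in fact the lower bound in the statement is an equality, so there is nothing to prove beyond recording the subordination identity.

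For the upper bound I would use the semigroup (Chapman--Kolmogorov) property of $p^D_\cdot$ together with symmetry. Write $p^D_t(x,y) = \int_D \int_D p^D_{t/3}(x,z)\, p^D_{t/3}(z,w)\, p^D_{t/3}(w,y)\, dz\, dw$. Bound the middle factor by the free transition density: $p^D_{t/3}(z,w) \le p_{t/3}(z-w) \le \max_{\xi} p_{t/3}(\xi) \le C(t^{-d/2}+t^{-d/\alpha})$ by Lemma \ref{transden_0} (applied at time $t/3$). This pulls the supremum out of the integral, leaving
$$
p^D_t(x,y) \le C(t^{-d/2}+t^{-d/\alpha}) \int_D p^D_{t/3}(x,z)\,dz \int_D p^D_{t/3}(w,y)\,dw.
$$
Finally identify $\int_D p^D_{t/3}(x,z)\,dz = P^x(\tau_D > t/3)$ (the total mass of the killed semigroup at time $t/3$ is the survival probability) and likewise $\int_D p^D_{t/3}(w,y)\,dw = P^y(\tau_D > t/3)$ using the symmetry $p^D_s(w,y)=p^D_s(y,w)$. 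Combining gives exactly the claimed bound.

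The only genuinely delicate point is the very first step: justifying the subordination identity $p^D_t(x,y) = E\, g^D_{T_\alpha(t)}(x,y)$ for an arbitrary open set $D$, i.e. that killing the subordinated Brownian motion on exiting $D$ is the same as subordinating the killed Brownian motion. This holds because $T_\alpha(t)$ is a pure-jump subordinator that is strictly increasing and, crucially, continuous at each fixed $t$ is irrelevant — what matters is that the range of $s\mapsto B_{T_\alpha(t)}$ is contained in the range of $s\mapsto B_s$, so $\tau_D$ for $X^D$ and for $B^D$ are related by the time change and $\{X^D_t \text{ defined}\} = \{T_\alpha(t) < \sigma_D\}$ where $\sigma_D$ is the Brownian exit time; conditioning on $T_\alpha(t)$ and using independence gives the identity. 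I expect this to be routine given the standard theory in \cite{BG}, \cite{ChZ}, and the other two inequalities are then immediate from Lemma \ref{transden_0} and Chapman--Kolmogorov as above; no estimate beyond those is needed.
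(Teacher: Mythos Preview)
Your upper-bound argument is correct and is exactly what the paper does: split $t$ into three equal pieces via Chapman--Kolmogorov, bound the middle kernel by $\max p_{t/3}\le C(t^{-d/2}+t^{-d/\alpha})$ from Lemma~\ref{transden_0}, and recognise the two remaining integrals as survival probabilities.

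The lower bound, however, contains a genuine error. You assert that $p^D_t(x,y)=E\,g^D_{T_\alpha(t)}(x,y)$ is an \emph{equality}, and that killing commutes with subordination. It does not. Because $T_\alpha$ is a pure-jump subordinator, the path $s\mapsto X_s=B_{T_\alpha(s)}$ samples the Brownian trajectory only on the (closed) range of $T_\alpha$; the Brownian motion can make an excursion out of $D$ and return during an interval that the subordinator jumps over, in which case $X$ never leaves $D$ while $B$ does. Your own observation that the range of $s\mapsto B_{T_\alpha(s)}$ is contained in the range of $s\mapsto B_s$ points exactly in this direction: it gives the inclusion
\[
\{B_u\in D,\ 0\le u<T_\alpha(t)\}\ \subseteq\ \{B_{T_\alpha(s)}\in D,\ 0\le s<t\},
\]
hence $\{T_\alpha(t)<\sigma_D\}\subseteq\{\tau_D>t\}$, and therefore only the inequality
\[
p^D_t(x,y)=P^x\bigl(B_{T_\alpha(t)}\in dy,\ B_{T_\alpha(s)}\in D,\ 0\le s<t\bigr)\ \ge\ P^x\bigl(B_{T_\alpha(t)}\in dy,\ B_u\in D,\ 0\le u<T_\alpha(t)\bigr)=E\,g^D_{T_\alpha(t)}(x,y).
\]
This is precisely what the paper proves, and the inequality is in general strict. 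So the statement of the lemma is sharp as an inequality; your claimed identity $\{X^D_t\text{ defined}\}=\{T_\alpha(t)<\sigma_D\}$ is false, and with it the asserted equality.
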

\begin{proof} We start with the upper bound.  Since $p_t^{D}(x,y)$ is a density of a
semigroup and  $p_t^{D}(x,y)\le \max_{z \in {\Rd}} p_t(z)$ then we
have
\begin{eqnarray*}p_{3t}^{D}(x,y)&=&\int_D \int_D
p_t^{D}(x,z)p^D_t(z,w)p_t^{D}(w,y)dz \,dw\\&\le& \max_{z \in
{\Rd}} p_t(z)\int_D p_t^{D}(x,z)dz\int_D p_t^{D}(w,y)dw\\&=&
\max_{z \in {\Rd}} p_t(z)P^x(\tau_D> t)P^y(\tau_D>
t),\end{eqnarray*} which proves the upper bound since
 $\max_{z \in {\Rd}} p_t(z)\le
  C (t^{-d/2}+ t^{-d/\alpha})$ by Lemma \ref{transden_0}.

To get the lower bound we use the subordination of the process  to
the Brownian motion: $X_t=B_{T_{\alpha}(t)}$.
Then \begin{eqnarray*}p_t^{D}(x,y)&=&P^x( B_{T_{\alpha}(t)}\in dy,
 B_{T_{\alpha}(s)}\in D, 0 \le s<t )\\&\ge&
P^x( B_{T_{\alpha}(t)}\in dy, B_s\in D, 0 \le s<T_{\alpha}(t)
).\end{eqnarray*}
Using the independence of $T_{\alpha}$ and the Brownian motion $B$
we obtain
$$P^x( B_{T_{\alpha}(t)}\in dy, B_s\in D, 0 \le s<T_{\alpha}(t)|T_{\alpha}(\cdot) )=
 g^D_{T_{\alpha}(t)}(x,y),$$
Integrating  we obtain the lower bound.
\end{proof}

The following lemma provides a very useful lower bound. Its proof
closely follows the approach used in  \cite{RSV}, where the bounds
on the potential kernels (Green functions for the whole $\Rd$)
were established for some special subordinated  Brownian motions
(in particular for  our process) for $d \ge 3$.
\begin{lem}    \label{potential_lower} For any open set $D\in {\Rd}$  we have
$$  G_D(x,y)\ge  \frac2\alpha G_{D}^{gauss}(x,y),$$
where  $G_{D}^{gauss}(x,y)$ is the Green function of $D$ for the
Brownian motion.
 \end{lem}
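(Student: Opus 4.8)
The plan is to run the subordination argument already set up in Lemma~\ref{density}. First I would take the lower bound $p^D_t(x,y)\ge E\,g^D_{T_\alpha(t)}(x,y)$ from that lemma and integrate it over $t\in(0,\infty)$. All quantities are nonnegative, so Tonelli's theorem applies; writing the inner expectation as $E\,g^D_{T_\alpha(t)}(x,y)=\int_{(0,\infty)}g^D_s(x,y)\,P^0(T_\alpha(t)\in ds)$ one obtains
$$G_D(x,y)=\int_0^\infty p^D_t(x,y)\,dt\ \ge\ \int_0^\infty E\,g^D_{T_\alpha(t)}(x,y)\,dt=\int_0^\infty g^D_s(x,y)\,\mu(ds),$$
where $\mu(ds)=\int_0^\infty P^0(T_\alpha(t)\in ds)\,dt$ is the potential measure of the ($m=1$) relativistic $\alpha/2$-stable subordinator $T_\alpha$. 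Since $T_\alpha(t)$ has density $\theta_\alpha(t,\cdot,1)$, $\mu$ is absolutely continuous, $\mu(ds)=u(s)\,ds$, and by \pref{subord2}
$$\int_0^\infty e^{-\theta s}u(s)\,ds=\int_0^\infty E^0 e^{-\theta T_\alpha(t)}\,dt=\int_0^\infty e^{t}\,e^{-t(\theta+1)^{\alpha/2}}\,dt=\frac{1}{(\theta+1)^{\alpha/2}-1},\qquad \theta>0 .$$
Recalling that $G^{gauss}_D(x,y)=\int_0^\infty g^D_s(x,y)\,ds$ (the Brownian generator being $\Delta$, by the normalization \pref{brownian}), and that $g^D_s(x,y)\ge0$, the lemma is reduced to the pointwise estimate
$$u(s)\ge \tfrac2\alpha\qquad\text{for a.e. }s>0,$$
because then $\int_0^\infty g^D_s(x,y)u(s)\,ds\ge\frac2\alpha\int_0^\infty g^D_s(x,y)\,ds$.

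The real content is therefore this lower bound on the potential density $u$, and I would extract it from the fine structure of $\phi(\theta)=(\theta+1)^{\alpha/2}-1$, the Laplace exponent of $T_\alpha$. The L\'evy measure of $T_\alpha$ has density proportional to $s^{-1-\alpha/2}e^{-s}$, a completely monotone function, so $\phi$ is a \emph{complete} Bernstein function (equivalently, $\phi\ge0$ on $(0,\infty)$ and $\phi$ maps the upper half-plane into itself). For a complete Bernstein function $\phi$ with $\phi(0)=0$, $\phi'(0)=\alpha/2\in(0,\infty)$ and $\phi(\infty)=\infty$, the reciprocal $1/\phi$ is a Stieltjes function; comparing the behaviour as $\theta\to\infty$ (which kills the constant part) and as $\theta\to0^+$ (where $1/\phi(\theta)=\frac{2/\alpha}{\theta}+O(1)$, which pins down the mass at the origin) gives
$$\frac{1}{\phi(\theta)}=\frac{2/\alpha}{\theta}+\int_{(0,\infty)}\frac{m(d\sigma)}{\theta+\sigma}$$
for a nonnegative measure $m$ with $\int_{(0,\infty)}(1+\sigma)^{-1}m(d\sigma)<\infty$. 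Since $1/\theta$ and $1/(\theta+\sigma)$ are the Laplace transforms of the constant $1$ and of $e^{-\sigma s}$, uniqueness of the Laplace transform yields $u(s)=\frac2\alpha+\int_{(0,\infty)}e^{-\sigma s}\,m(d\sigma)\ge\frac2\alpha$ for a.e.\ $s>0$, exactly as required. A more hands-on alternative: expanding $\frac1{(\theta+1)^{\alpha/2}-1}=\sum_{k\ge1}(\theta+1)^{-k\alpha/2}$ gives $u(s)=\sum_{k\ge1}\frac{s^{k\alpha/2-1}e^{-s}}{\Gamma(k\alpha/2)}$, so $u$ is the renewal density of a renewal process with $\Gamma(\alpha/2,1)$ interarrivals; as $\alpha/2<1$ that interarrival density is nonincreasing, hence has decreasing failure rate, so by the classical monotonicity theorem for such renewal processes $u$ is nonincreasing, and the renewal theorem gives $u(\infty)=2/\alpha$; again $u\ge2/\alpha$.

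The step I expect to be the genuine obstacle is precisely this pointwise inequality $u(s)\ge2/\alpha$. At the level of Laplace transforms $1/\phi(\theta)\ge 2/(\alpha\theta)$ is immediate — it is merely concavity of the Bernstein function $\phi$, i.e.\ $\phi(\theta)\le\phi'(0)\theta$ — but positivity of a Laplace transform does not give positivity of the transformed function, and the passage to the pointwise statement really uses that $\phi$ is a \emph{complete} Bernstein function (a special subordinator), not just Bernstein. Everything else is routine; the only caveat is that when $G^{gauss}_D(x,y)=+\infty$ the assertion is vacuous, and otherwise the interchange of integrals and the identification of $\mu$ through \pref{subord2} are elementary.
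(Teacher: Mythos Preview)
Your argument is correct and follows exactly the paper's route: use the subordination lower bound from Lemma~\ref{density}, interchange integrals to reduce to $\int_0^\infty g^D_s(x,y)\,u(s)\,ds$ with $u$ the potential density of $T_\alpha$, and then establish the pointwise bound $u(s)\ge 2/\alpha$. The only difference is in that last step: the paper quotes \cite{RSV} for the complete monotonicity of $u$ and then identifies the limit $2/\alpha$ via a Tauberian (monotone density) argument, whereas you give two self-contained proofs---the Stieltjes representation of $1/\phi$ (which is essentially what underlies the result cited from \cite{RSV}, and reads off the constant directly) and an independent renewal-theoretic argument.
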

\begin{proof} Let $Q(x,y)=\int_0^\infty E g^D_{T_{\alpha}(t)}(x,y)dt$. From the previous
 lemma it is enough to prove that $Q(x,y)\ge  \frac2\alpha   G_{D}^{gauss}(x,y).$ We have
\begin{eqnarray*}
Q(x,y)&=&\int_0^\infty E g_{T_{\alpha}(t)} ^D(x,y)dt\\
&=&\int_0^\infty e^{t}\int_0^\infty g_u^D(x,y)e^{-u}\theta_\alpha(t,u) du dt\\
&=&\int_0^\infty g_u^D(x,y)e^{-u}\int_0^\infty e^{t}\theta_\alpha(t,u)dt du\\
&=&\int_0^\infty g_u^D(x,y)G(u) du,
\end{eqnarray*}
where  $G(u)=e^{-u}\int_0^\infty e^{t}\theta_\alpha(t,u)dt$ is the
potential kernel of the subordinator $T_\alpha(t)$. It was proved
in \cite{RSV} that $G(u)$ is
  a completely monotone (hence decreasing) function and
$\inf_{u> 0}G(u)=\lim_{u\to \infty}G(u)=C_\alpha$. 
 We find the constant $C_\alpha$ by taking into account
 the asymptotics of the
  Laplace transform of $G(u)$ at the origin:
\begin{eqnarray*}
\int_0^\infty e^{-\lambda u}G(u) du &=& \int_0^\infty
e^{t}\int_0^\infty e^{-u(1+\lambda)}\theta_\alpha(t,u)du dt \\&=&
\int_0^\infty e^{t}e^{-(1+\lambda)^{\alpha/2} t}dt=
 \frac 1 {(1+\lambda)^{\alpha/2} -1}\\&\cong& \frac 2{\lambda \alpha},\quad \lambda\rightarrow 0.
\end{eqnarray*}
Applying the monotone density theorem we obtain that $C_\alpha= 2/
\alpha$.
 Thus, since $g_u^D(x,y)\ge 0$, we finally obtain
\begin{eqnarray*}
Q(x,y)
&=&\int_0^\infty g_u^D(x,y)G(u) du\\
&\ge&\frac2\alpha  \int_0^\infty g_u^D(x,y) du = \frac2\alpha
G_{D}^{gauss}(x,y).
\end{eqnarray*}
\end{proof}

At this point let us recall that the exact formulas for the
Brownian Green  functions are well known for several regular sets
as balls or half-spaces (see e.g. \cite{Ba}). Since some of them
will be useful in the sequel we will list them for the future
reference. Recall that the Brownian motion we refer to in this
paper  has its clock running twice faster then the usual Brownian
motion. For the half-space $\H$, for $d\ge3$, we have that
 \begin{eqnarray}G^{gauss}_\H(x,y)&=&  C(d)\left[\frac1{|x-y|^{d-2}}- \frac1{|x-y^*|^{d-2}}\right]\nonumber\\
 \label{gaussGreen} &\approx& \min\left\{\frac{ x_d y_d }{|x-y|^d}, \frac1{|x-y|^{d-2}}\right\}, \quad x,y\in\H,
\end{eqnarray}
where $y^*=(y_1,\dots,y_{d-1}, -y_d)\in \H^c$. \\
For the half-space $\H$, for $d=2$,
 \begin{equation}\label{gaussGreen2}G^{gauss}_\H(x,y)= \frac1{2\pi} \ln\left(1+4\frac{x_2y_2}{|x-y|^2}\right), \quad x,y\in\H.
\end{equation}
In the one dimensional case
\begin{equation}\label{gaussGreen1}G^{gauss}_{(0,\infty)}(x,y)= x\wedge y, \quad x,y>0. \end{equation}
For the finite interval $(0, R)$ we have
 \begin{equation}\label{gaussGreen3}G^{gauss}_{(0, R)}(x,y)= \frac {x(R-y) \wedge y(R-x)}R, \quad x, y\in (0,R). \end{equation}

\begin{lem} \label{continuity}Let $D$ be an open  subset of $\H$.  For fixed $y\in \H  $ the function
$G_{\H}(\cdot,y)$ is  regular harmonic on $D$ provided $ y \notin
\overline{D}$. The same conclusion holds if $\H$ is replaced by an
open bounded set.
\end{lem}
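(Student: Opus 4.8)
The plan is to realize $u:=G_{\H}(\cdot,y)$, for the fixed $y\in\H$, as a potential of the killed process $X^{\H}$ and to read off regular harmonicity on $D$ from the fact that the measure producing this potential is concentrated at the single point $y\notin\overline D$. Concretely, since $D\subset\H$ is open we have $\tau_D\le\tau_{\H}$, and on the event $\{\tau_D<\tau_{\H}\}$ the path stays in $\H$ up to and including time $\tau_D$; applying the strong Markov property at $\tau_D$ — this is \pref{greenpot100} written for the pair $D\subset\H$ at the level of transition densities — gives, for every $t>0$,
\[
p^{\H}_t(x,y)=p^D_t(x,y)+E^x\!\left[\tau_D<t\wedge\tau_{\H};\,p^{\H}_{t-\tau_D}(X_{\tau_D},y)\right],\qquad x\in D .
\]
Integrating in $t$ over $(0,\infty)$ and using Tonelli's theorem, the left side becomes $G_{\H}(x,y)$, the first term on the right becomes $G_D(x,y)$, and in the last term the inner time-integral collapses to $G_{\H}(X_{\tau_D},y)$, so that
\[
G_{\H}(x,y)=G_D(x,y)+E^x\!\left[\tau_D<\tau_{\H};\,G_{\H}(X_{\tau_D},y)\right],\qquad x\in D
\]
(an identity in $[0,\infty]$; alternatively this is the $\lambda\downarrow0$ limit of \pref{greenpot100}).

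It remains to discard the two surplus terms. Since $y\notin\overline D$, in particular $y\notin D$, and the sub-probability measure $A\mapsto P^x(X_t\in A,\ t<\tau_D)$ is carried by $D$ (on $\{t<\tau_D\}$ one has $X_t\in D$); hence $p^D_t(x,y)=0$ for every $t>0$ and therefore $G_D(x,y)=0$, leaving $G_{\H}(x,y)=E^x[\tau_D<\tau_{\H};\,G_{\H}(X_{\tau_D},y)]$. Next, because $\tau_D\le\tau_{\H}$ we may split $\{\tau_D<\infty\}=\{\tau_D<\tau_{\H}\}\cup\{\tau_D=\tau_{\H}<\infty\}$; on the second event $X_{\tau_D}=X_{\tau_{\H}}\in\H^c$, and every point of $\H^c$ is regular for $\H^c$, so $\tau_{\H}=0$ $P^z$-a.s. for $z\in\H^c$ and consequently $G_{\H}(z,y)=0$ there. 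Thus the second event contributes nothing and
\[
G_{\H}(x,y)=E^x\!\left[\tau_D<\infty;\,G_{\H}(X_{\tau_D},y)\right],\qquad x\in D,
\]
which is precisely the regular harmonicity of $G_{\H}(\cdot,y)$ on $D$.

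The statement for an open bounded set $U$ in place of $\H$ (with $D\subset U$ open and $y\in U\setminus\overline D$) follows by exactly the same steps: the strong Markov decomposition at $\tau_D$ gives $G_U(x,y)=G_D(x,y)+E^x[\tau_D<\tau_U;\,G_U(X_{\tau_D},y)]$, the term $G_D(x,y)$ vanishes because $y\notin D$, and since $\tau_U<\infty$ $P^x$-a.s. the only remaining contribution is over $\{\tau_D=\tau_U\}$, on which $X_{\tau_D}=X_{\tau_U}\in U^c$ and $G_U(\cdot,y)=0$; hence $G_U(x,y)=E^x[\tau_D<\infty;\,G_U(X_{\tau_D},y)]$ for $x\in D$.

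I expect the only point that needs real care to be the last one, namely that $G_{\H}(\cdot,y)$, resp. $G_U(\cdot,y)$, genuinely vanishes at the landing point $X_{\tau_D}$ sitting in the complement. For the half-space this is automatic, as $\H^c$ consists entirely of regular points; for the bounded domains actually used later in the paper (balls, and more generally $C^{1,1}$ domains) the same is true, so again there is nothing to check. For a completely general bounded open $U$ one instead invokes that the set of points of $U^c$ that are irregular for $U^c$ is polar, hence $P^x$-a.s. avoided by $X_{\tau_D}$, which suffices to kill the $\{\tau_D=\tau_U\}$ term. Apart from that, everything is routine: the ingredients are only the strong Markov property, the inclusion $\tau_D\le\tau_{\H}$ (resp.\ $\tau_D\le\tau_U$), and the trivial support property $p^D_t(x,y)=0$ for $y\notin D$.
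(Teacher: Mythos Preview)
Your proof is correct and follows essentially the same approach as the paper: both invoke the strong Markov decomposition \pref{greenpot100} for $D\subset\H$, drop $G_D(x,y)$ using $y\notin\overline D$, and eliminate the $\{\tau_D=\tau_{\H}\}$ contribution via regularity of the points of $\H^c$. The only cosmetic difference is that the paper works with $G^\lambda_{\H}$ and passes $\lambda\downarrow 0$ by monotone convergence, whereas you integrate the transition-density identity directly at $\lambda=0$; the paper also records at the outset that $G_{\H}(z,w)<\infty$ for $z\neq w$ (via Lemmas~\ref{transden2}, \ref{Grzywny}, \ref{density}), which you may wish to mention so that the resulting regular harmonic function is finite-valued.
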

\begin{proof}
The proof is standard and is  included  for completeness. First
observe that $G_{\H}(z,w)<\infty$ for $z\neq w$, which follows
from Lemmas \ref{transden2}, \ref{Grzywny} and \ref{density}.
Next, applying (\ref{greenpot100}) with $D_2=\H$ and $D_1=D$ we
have

\begin{equation}G^\lambda_{\H}(x,y)-G^\lambda_{D}(x,y)= E^x\left[\tau_{D}<\tau_{\H};e^{-\lambda\tau_{D}}G^\lambda_{\H}(X_{\tau_{D}},y)\right].\label{harmonic100}\end{equation}

 If $y\notin \overline{D}$ then $\tau_D=0$ and $X_{\tau_D}=y,$ $P^y$- a.s. so $G^\lambda_{D}(x,y)=G^\lambda_D(y,x)=0$. Moreover
 $E^x G^\lambda_{\H}(X_{\tau_{\H}},y)=0$, which follows from the fact that
$P^x\left(X_{\tau_{\H}} \in \H^c\setminus (\H^c)^r \right)=0$,
where $(\H^c)^r$ is a set of regular points of $\H^c$ and for
every $z\in (\H^c)^r$, $y\in \Rd$ we have $G^\lambda_{\H}(z,y)=0$
(see \cite{BG}). This implies that (\ref{harmonic100}) can be
rewritten as
$$G^\lambda_{\H}(x,y)= E^x\,e^{-\lambda\tau_{D}}G^\lambda_{\H}(X_{\tau_{D}},y).$$
Passing with $\lambda\to 0$ and observing that
$G^\lambda_{\H}\nearrow G_{\H}$ we obtain the conclusion by the
monotone convergence theorem.

The same arguments can be applied for any bounded set $F$, since
there is a half-space containing  $F$, which guarantees that the
Green function $G_{F}(x,y)< \infty$ for $x\neq y$.

\end{proof}

The estimates below  following from  Theorem \ref{rel1Poisson}
were  proved in \cite{BRB}. They turn out to be useful in the next
sections.
\begin{thm}\label{Green1est}
Assume that $d=1$ and $\alpha\ge 1$. When $|x-y|\ge 1\wedge
x\wedge y>0$ we obtain
$$G^1_{(0,\infty)}(x,y)\approx \frac{e^{-|x-y|}}{|x-y|^{1-\alpha/2}}(1\wedge x\wedge y)^{\alpha/2},$$
while for $|x-y|<1\wedge x\wedge y$ we obtain
\begin{eqnarray*}
G^1_{(0,\infty)}(x,y)&\approx& \log \left[2\frac{1\wedge x\wedge y}{|x-y|}\right],\quad \textrm{if }\quad \alpha=1,\\
G^1_{(0,\infty)}(x,y)&\approx& (1\wedge x\wedge
y)^{\alpha-1},\quad \textrm{if }\quad \alpha>1.
\end{eqnarray*}
In the remaining case, $\alpha<d$, we have
$$G^1_\H(x,y)\approx \frac{K_{(d-\alpha)/2}(|x-y|)}{|x-y|^{(d-\alpha)/2}}\left[\left(\frac{1\wedge x_d\wedge y_d}{|x-y|\wedge 1}\right)^{\alpha/2}\wedge 1\right].$$
\end{thm}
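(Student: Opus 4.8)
The plan is to derive all the estimates from the explicit formula of Theorem~\ref{rel1Poisson} by reducing it to an elementary one-dimensional integral and then estimating that integral region by region. Write $r=|x-y|$ and $y^{*}=(y_1,\dots,y_{d-1},-y_d)$, so that $|x-y^{*}|^{2}=r^{2}+4x_dy_d$. In the formula of Theorem~\ref{rel1Poisson} I would substitute first $s=(t+1)^{1/2}$ and then $v=rs$: the upper limit $\tfrac{4x_dy_d}{r^{2}}$ turns into $|x-y^{*}|$, and all powers of $r$ and of $t+1$ cancel, leaving
$$
 G^{1}_{\H}(x,y)=c_{d,\alpha}\int_{r}^{|x-y^{*}|}\bigl(v^{2}-r^{2}\bigr)^{\alpha/2-1}\,v^{\,1-d/2}\,K_{d/2}(v)\,dv,\qquad c_{d,\alpha}=\frac{2^{2-\alpha}}{(2\pi)^{d/2}\Gamma(\alpha/2)^{2}}.
$$
Since $K_{1/2}(v)=\sqrt{\pi/(2v)}\,e^{-v}$ and $|x-y^{*}|=x+y$ when $d=1$, this becomes the completely explicit identity
$$
 G^{1}_{(0,\infty)}(x,y)=\frac{2^{1-\alpha}}{\Gamma(\alpha/2)^{2}}\int_{|x-y|}^{x+y}\bigl(v^{2}-|x-y|^{2}\bigr)^{\alpha/2-1}e^{-v}\,dv .
$$

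For the one-dimensional cases I would, assuming $x\le y$ by symmetry, substitute $w=v-|x-y|$, obtaining
$$
 G^{1}_{(0,\infty)}(x,y)=\frac{2^{1-\alpha}}{\Gamma(\alpha/2)^{2}}\,e^{-|x-y|}\int_{0}^{2x} w^{\alpha/2-1}\bigl(w+2|x-y|\bigr)^{\alpha/2-1}e^{-w}\,dw,
$$
and then split the $w$-integral at $w=2|x-y|$ and, where it matters, at $w=1$. On $[0,2|x-y|]$ the factor $w+2|x-y|$ is comparable to $|x-y|$; on $[2|x-y|,2x]$ (when that interval is nonempty) it is comparable to $w$; the factor $e^{-w}$ is $\approx1$ over the part of the range below $1$ and otherwise produces a convergent $\Gamma$-type integral; and $\int_{0}^{a}w^{\beta-1}\,dw\approx a^{\beta}$ for $\beta>0$, while $\int_{2|x-y|}^{a}w^{\alpha-2}\,dw$ is a power of $a$ when $\alpha>1$ and a logarithm when $\alpha=1$. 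Keeping the dominant term in each of the configurations $|x-y|\ge1$, $x\le|x-y|<1$, and $|x-y|<1\wedge x$ reproduces the first three displayed estimates; in particular the logarithm in the case $\alpha=1$, $|x-y|<1\wedge x\wedge y$ comes exactly from $\int_{2|x-y|}^{2(1\wedge x)}w^{-1}\,dw$.

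For the range $\alpha<d$ I would compare the truncated integral above with the integral over all of $(r,\infty)$. Fixing $x-y$ and letting $x_d,y_d\to\infty$, one has $|x-y^{*}|\to\infty$, while $P^{z}(\tau_{\H}\le t)\to0$ as $z_d\to\infty$ and $p_{s}(w)\le c|w|^{-d}$ by Lemma~\ref{transden2}; hence $p^{\H}_{t}(x,y)\to p_{t}(x-y)$ for every $t$, and, by dominated convergence, $G^{1}_{\H}(x,y)\to\int_{0}^{\infty}e^{-t}p_{t}(x-y)\,dt=U_{1}(x-y)$. On the right-hand side the integral over $(r,|x-y^{*}|)$ increases to the (finite) integral over $(r,\infty)$, so
$$
 c_{d,\alpha}\int_{r}^{\infty}\bigl(v^{2}-r^{2}\bigr)^{\alpha/2-1}v^{\,1-d/2}K_{d/2}(v)\,dv=U_{1}(x-y)=C(\alpha,d)\,\frac{K_{(d-\alpha)/2}(r)}{r^{(d-\alpha)/2}},
$$
which is precisely the Macdonald prefactor in the statement. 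It therefore remains to estimate the quotient of $\int_{r}^{|x-y^{*}|}$ by $\int_{r}^{\infty}$. Using \pref{asympt0} and \pref{asympt_infty} one has $v^{\,1-d/2}K_{d/2}(v)\approx v^{1-d}$ for $v\le1$ and $v^{\,1-d/2}K_{d/2}(v)\approx v^{(1-d)/2}e^{-v}$ for $v\ge1$; feeding this in and using the hypothesis $\alpha<d$ (which guarantees that, after the scaling $v=r\sigma$, the tail of the integral converges) one checks that the quotient is comparable to $1$ as soon as $|x-y^{*}|$ exceeds a fixed multiple of $\max(r,1)$, and is otherwise governed by the contribution of $(v^{2}-r^{2})^{\alpha/2-1}$ near the lower endpoint $v=r$, which after that scaling is comparable to $\bigl(\tfrac{|x-y^{*}|}{r}-1\bigr)^{\alpha/2}\wedge1$; rewriting $\tfrac{|x-y^{*}|}{r}-1$ through $|x-y^{*}|^{2}-r^{2}=4x_dy_d$ and $|x_d-y_d|\le r$ then gives the stated boundary factor, hence the fourth estimate.

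Once the integral representations displayed above are in hand, each of the estimates is an elementary one-variable computation; the real content of the argument, and the main obstacle, is the bookkeeping in the last two steps. One has to keep three scales in play simultaneously — the size of $r$ against $1$, the size of $|x-y^{*}|-r$ against $1$, and the size of $x_d\wedge y_d$ — follow the transition of $v^{\,1-d/2}K_{d/2}(v)$ between its polynomial regime ($v\le1$) and its exponential regime ($v\ge1$) as $v$ crosses $1$ inside the interval of integration, and, only in the case $d=1$, $\alpha=1$, isolate the logarithmic contribution from the $\Gamma$-type remainder.
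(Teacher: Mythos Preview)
The paper does not actually prove this theorem in the text; it only states that ``the estimates below following from Theorem~\ref{rel1Poisson} were proved in \cite{BRB}'' and quotes the result. So there is no in-paper proof to compare against beyond the remark that everything should come out of the explicit integral formula for $G^{1}_{\H}$.

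Your sketch does precisely what that remark suggests, and what \cite{BRB} presumably carries out: you rewrite the formula of Theorem~\ref{rel1Poisson} via the substitutions $s=(t+1)^{1/2}$, $v=rs$ into the clean form
\[
G^{1}_{\H}(x,y)=c_{d,\alpha}\int_{|x-y|}^{|x-y^{*}|}(v^{2}-|x-y|^{2})^{\alpha/2-1}\,v^{\,1-d/2}K_{d/2}(v)\,dv,
\]
specialise to $d=1$ using $K_{1/2}(v)=\sqrt{\pi/(2v)}\,e^{-v}$, and then estimate the resulting one-variable integrals region by region. The computations you outline for the three one-dimensional regimes are correct; in particular, your identification of the logarithm for $\alpha=1$ as coming from $\int_{2|x-y|}^{2(1\wedge x)}w^{-1}\,dw$ (with the piece over $[0,2|x-y|]$ contributing the additive constant that keeps the estimate bounded below when $|x-y|$ is close to $1\wedge x$) is exactly right. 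The limiting argument sending $x_d,y_d\to\infty$ to identify the full integral $\int_{r}^{\infty}$ with $U_{1}(x-y)=C(\alpha,d)K_{(d-\alpha)/2}(r)/r^{(d-\alpha)/2}$ is a nice shortcut that avoids evaluating the integral directly.

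The only place where your sketch is genuinely thin is the last step for $\alpha<d$: showing that the quotient $\int_{r}^{|x-y^{*}|}/\int_{r}^{\infty}$ is comparable to $\bigl[(1\wedge x_d\wedge y_d)/(|x-y|\wedge1)\bigr]^{\alpha/2}\wedge1$. You say it is ``governed by the contribution near the lower endpoint,'' comparable to $\bigl(|x-y^{*}|/r-1\bigr)^{\alpha/2}\wedge1$, and then rewrite via $|x-y^{*}|^{2}-r^{2}=4x_dy_d$. That is the right mechanism, but turning $4x_dy_d/\bigl(r(|x-y^{*}|+r)\bigr)$ into the stated boundary factor requires separate treatment of the four configurations $r\gtrless1$, $x_d\wedge y_d\gtrless1$, together with the observation $|x_d-y_d|\le r$, and also checking that the ratio is $\approx1$ once $|x-y^{*}|\ge C\max(r,1)$ (here is where $\alpha<d$ is used, as you note). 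None of this is hard, but it is a genuine case split that you have only named, not carried out. Since the paper itself defers all of this to \cite{BRB}, your sketch is at the same level of detail as the paper and the approach is the intended one.
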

Finally we  state some basic scaling properties both for the
Poisson kernel and
  the Green function. The proof employs the scaling property \pref{scaling} and
  consists of elementary but tedious calculation hence is omitted.

  \begin{lem}[Scaling Property] \label{scaling1}
Let $D$ be an open subset of $\Rd$ and $P_{D,m}$,  $G_{D,m}$ be
the Poisson kernel, or the  Green function, respectively, for $D$
for the process with parameter $m$. Then
$$P_{D,m}(x,u)= m^{d/\alpha}P_{m^{1/\alpha}D}(m^{1/\alpha}x,m^{1/\alpha}u),\quad
x\in D, u\in D^c\,, $$
$$G_{D,m}(x,y)= m^{(d-\alpha)/\alpha}G_{m^{1/\alpha}D}(m^{1/\alpha}x,m^{1/\alpha}y),\quad x\in D, y\in D\,. $$
Thus, if $D$ is a cone with vertex at $0$ we obtain:
$$P_{D,m}(x,u)= m^{d/\alpha}P_{D}(m^{1/\alpha}x,m^{1/\alpha}u),\quad x\in D, u\in D^c\,, $$
$$G_{D,m}(x,y)= m^{(d-\alpha)/\alpha}G_{D}(m^{1/\alpha}x,m^{1/\alpha}y), \quad x\in D,
 y\in D\,. $$
   \end{lem}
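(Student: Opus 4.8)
The plan is to reduce everything to the process-level scaling relation $X^m_t\stackrel{d}{=}m^{-1/\alpha}X_{mt}$, which refines the one-dimensional marginal identity already recorded after \pref{scaling}. First I would establish this relation at the level of the subordinator: from the Laplace transform \pref{subord2} one checks that $T_\alpha(\cdot,m)$ and $m^{-2/\alpha}T_\alpha(m\,\cdot\,,1)$ have the same Laplace transform of their increments, hence — both being subordinators — the same law as processes. Combining this with the Brownian scaling $B_{m^{-2/\alpha}u}\stackrel{d}{=}m^{-1/\alpha}B_u$ coming from \pref{brownian}, and with the independence of $B$ and $T_\alpha$, one gets $B_{T_\alpha(t,m)}\stackrel{d}{=}m^{-1/\alpha}B_{T_\alpha(mt,1)}$, i.e. the claimed process identity. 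Equivalently: if $X^m$ starts from $x$, then $Y_t:=m^{1/\alpha}X^m_t$ is a version of the $m=1$ process $\tilde X$ started from $m^{1/\alpha}x$, run at $m$ times the speed, so that $Y_t=\tilde X_{mt}$.

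Next I would transfer this to the killed objects. Since $X^m_t\in D\iff Y_t\in m^{1/\alpha}D$, the exit times satisfy $\tau_D^m=\tilde\tau_{m^{1/\alpha}D}/m$ and the exit positions satisfy $X^m_{\tau_D^m}=m^{-1/\alpha}\tilde X_{\tilde\tau_{m^{1/\alpha}D}}$, all in the sense of joint distributions. For the Poisson kernel this gives the identity of measures $P_{D,m}(x,\cdot)=P_{m^{1/\alpha}D}(m^{1/\alpha}x,\ m^{1/\alpha}\,\cdot\,)$ on $D^c$; rewriting in terms of densities with respect to Lebesgue measure on $\Rd$, the change of variables $u\mapsto m^{1/\alpha}u$ contributes the Jacobian $m^{d/\alpha}$ and yields $P_{D,m}(x,u)=m^{d/\alpha}P_{m^{1/\alpha}D}(m^{1/\alpha}x,m^{1/\alpha}u)$.

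For the Green function I would argue in the same way at the level of the killed transition density. From the probabilistic description $p^{D,m}_t(x,y)\,dy=P^x(X^m_t\in dy,\ t<\tau^m_D)$ together with the scaling above (this can also be read off directly from \pref{density100} and \pref{scaling}), the same change of variables gives $p^{D,m}_t(x,y)=m^{d/\alpha}\,p^{\,m^{1/\alpha}D}_{mt}(m^{1/\alpha}x,m^{1/\alpha}y)$. Integrating in $t$ and substituting $s=mt$ produces an extra factor $m^{-1}$, so $G_{D,m}(x,y)=m^{d/\alpha-1}G_{m^{1/\alpha}D}(m^{1/\alpha}x,m^{1/\alpha}y)=m^{(d-\alpha)/\alpha}G_{m^{1/\alpha}D}(m^{1/\alpha}x,m^{1/\alpha}y)$, which is the second formula. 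Finally, when $D$ is a cone with vertex at $0$ one has $m^{1/\alpha}D=D$, and both identities collapse to the stated cone versions.

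The only genuinely delicate point is the first step — making the process-level scaling rigorous rather than just the one-dimensional marginal identity — since everything afterwards is bookkeeping with a linear change of variables and Fubini. Once the subordinator scaling is in hand (immediate from \pref{subord2}) the rest is, as the authors note, elementary.
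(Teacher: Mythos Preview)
Your argument is correct and is exactly the kind of ``elementary but tedious calculation'' the paper alludes to: the paper omits the proof entirely, merely pointing to the transition-density scaling \pref{scaling}, and your write-up carries this through to the killed density and integrates, with the Jacobian bookkeeping done properly. The only addition you make is upgrading the one-marginal identity after \pref{scaling} to a process-level statement via the subordinator Laplace transform \pref{subord2}, which is indeed the clean way to justify the exit-time and exit-position relations; this is implicit in the paper's remark and not a genuinely different route.
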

Due to these scaling properties it is enough to investigate the
case $m=1$.

\section{Green function of half-line}
\setcounter{equation}{0}

In this section $d=1$ and the half-space $\H$ is a half-line, that
is $\H=(0,\infty)$.

\begin{lem}\label{trivialbound}
Assume that $|x-y|\le 3$. Then there is $ C = C(\alpha)$ such that
$$\left(1\wedge x\wedge  y \right)^{\alpha/2}\le C G^1_{{(0,\infty)}}(x,y).$$
\end{lem}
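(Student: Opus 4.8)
The plan is to read $G^1_{(0,\infty)}$ off the explicit formula in Theorem \ref{rel1Poisson} and turn it into a completely elementary integral. Specializing that formula to $d=1$ and using the classical identity $K_{1/2}(z)=\sqrt{\pi/(2z)}\,e^{-z}$, then substituting $u=|x-y|^2\,t$ and simplifying the constant, one obtains for $x,y>0$, with $r:=|x-y|$,
$$
G^1_{(0,\infty)}(x,y)=\frac{2^{-\alpha}}{\Gamma(\alpha/2)^2}\int_0^{4xy}\frac{u^{\alpha/2-1}}{\sqrt{u+r^2}}\;e^{-\sqrt{u+r^2}}\,du .
$$
Two elementary facts will drive the lower bound: the identity $4xy+r^2=(x+y)^2$, and $x+y=2(x\wedge y)+|x-y|\le 2(x\wedge y)+3$.

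I then discard part of the integration range. Since $r\le 3$, on $u\in(0,\,4xy\wedge 1)$ we have $u+r^2\le 1+9=10$, hence $e^{-\sqrt{u+r^2}}\ge e^{-\sqrt{10}}$, and keeping the singular factor as $(u+r^2)^{-1/2}\ge\bigl((4xy\wedge 1)+r^2\bigr)^{-1/2}$ this gives
$$
G^1_{(0,\infty)}(x,y)\ \ge\ c(\alpha)\,\frac{(4xy\wedge 1)^{\alpha/2}}{\sqrt{(4xy\wedge 1)+r^2}} .
$$
It remains to bound the right-hand side below by $c(\alpha)(1\wedge x\wedge y)^{\alpha/2}$, which is a two-case check. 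If $4xy\ge 1$, the right-hand side equals $c(\alpha)/\sqrt{1+r^2}\ge c(\alpha)/\sqrt{10}$, a positive constant, which dominates $(1\wedge x\wedge y)^{\alpha/2}\le 1$. If $4xy<1$ (so in particular $x\wedge y<1$), the identity makes the denominator exactly $\sqrt{4xy+r^2}=x+y$, and from $xy\ge(x\wedge y)^2$, $x+y\le 2(x\vee y)$ and $x\vee y=(x\wedge y)+r<4$, together with $\alpha/2-1<0$, one gets $(4xy)^{\alpha/2}/(x+y)\ge c(\alpha)(x\wedge y)^{\alpha/2}=c(\alpha)(1\wedge x\wedge y)^{\alpha/2}$.

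The one subtlety is the regime $x\wedge y\to 0$ with $r$ bounded away from $0$: there one must integrate $u$ all the way up to $4xy$ — restricting only to $u<(x\wedge y)^2$ would give the too-weak bound $c\,(x\wedge y)^{\alpha}$ — and exploit that $x+y$ stays bounded thanks to $|x-y|\le 3$; this is exactly where the hypothesis is used. As an alternative, the statement can be deduced directly from Theorem \ref{Green1est}: when $r\ge 1\wedge x\wedge y$ and $\alpha\ge 1$ it gives $G^1\ge c\,e^{-r}r^{\alpha/2-1}(1\wedge x\wedge y)^{\alpha/2}\ge c(\alpha)(1\wedge x\wedge y)^{\alpha/2}$ since $r\le 3$; when $r<1\wedge x\wedge y$ and $\alpha>1$ one uses $(1\wedge x\wedge y)^{\alpha-1}\ge(1\wedge x\wedge y)^{\alpha/2}$ (valid since $\alpha\le 2$), and for $\alpha=1$ that $\log[2(1\wedge x\wedge y)/r]\ge\log 2$; and when $\alpha<1$ the prefactor $K_{(1-\alpha)/2}(r)\,r^{-(1-\alpha)/2}$ is bounded below on $0<r\le 3$ while the bracketed factor there is $\ge(1\wedge x\wedge y)^{\alpha/2}$.
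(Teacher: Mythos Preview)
Your argument is correct. Your primary route differs from the paper's: you go back to the integral representation of $G^1_{(0,\infty)}$ in Theorem~\ref{rel1Poisson}, specialize via $K_{1/2}(z)=\sqrt{\pi/(2z)}\,e^{-z}$ and the substitution $u=|x-y|^2t$, and then extract the lower bound by elementary estimates on the resulting integral, using the algebraic identity $4xy+|x-y|^2=(x+y)^2$ together with $|x-y|\le 3$ to keep $x+y$ bounded. The paper instead quotes the ready-made two-sided asymptotics of Theorem~\ref{Green1est} (from \cite{BRB}) and reads off the lower bound by a case split on $\alpha<1$, $\alpha=1$, $\alpha>1$ and on whether $|x-y|\gtrless 1\wedge x\wedge y$ --- which is precisely the ``alternative'' you sketch in your final paragraph. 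Your direct approach is more self-contained (it does not need the sharp estimates collected in Theorem~\ref{Green1est}) and treats all $0<\alpha<2$ uniformly without case distinctions on $\alpha$; the paper's approach is shorter once Theorem~\ref{Green1est} is taken as given.
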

\begin{proof}
We use Theorem \ref{Green1est}. First let $\alpha \ge 1$ and
$|x-y|\ge 1\wedge x\wedge y>0$, then
\begin{eqnarray*}G^1_{(0,\infty)}(x,y)&\ge&
ce^{-|x-y|}{|x-y|^{\alpha/2-1}}(1\wedge x\wedge y)^{\alpha/2}\\
&\ge& ce^{-3}3^{\alpha/2-1}(1 \wedge x\wedge y)^{\alpha/2}.
\end{eqnarray*}
 Suppose that
$|x-y|< 1\wedge x\wedge y$. For $\alpha=1$,
$$G^1_{(0,\infty)}(x,y)\ge c \log \left[2\frac{1\wedge x\wedge y}{|x-y|}\right]\ge c\log 2\ge c (1\wedge x\wedge  y )^{\alpha/2}.$$
For $\alpha>1$,
$$G^1_{(0,\infty)}(x,y)\ge c (1\wedge x\wedge  y )^{\alpha-1}\ge c (1\wedge x\wedge  y )^{\alpha/2}.$$
Next, observe that $ K_{(1-\alpha)/2}(r)/r^{(1-\alpha)/2}$ is
decreasing. Therefore for $\alpha<1$ we obtain
\begin{eqnarray*}G^1_{(0,\infty)}(x,y)&\ge& c \frac{K_{(1-\alpha)/2}(|x-y|)}{|x-y|^{(1-\alpha)/2}}\left[\left(\frac{1\wedge x\wedge y}{|x-y|\wedge 1}\right)^{\alpha/2}\wedge
1\right]\\
&\ge & c\frac{K_{(1-\alpha)/2}(3)}{3^{(1-\alpha)/2}}\left(1\wedge
x\wedge y\right)^{\alpha/2}.\end{eqnarray*}
\end{proof}

\begin{thm}\label{green}For $x,y>0$,
$$G_{(0,\infty)}(x,y)\approx
 G^1_{(0,\infty)}(x,y)+(x\wedge y)\vee (x\wedge y)^{\alpha/2}.$$
\end{thm}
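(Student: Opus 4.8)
The plan is to write $G_{(0,\infty)}(x,y)=G^1_{(0,\infty)}(x,y)+R(x,y)$ with $R(x,y)=\int_0^\infty(1-e^{-t})p^{(0,\infty)}_t(x,y)\,dt\ge0$, and to prove the two inequalities of the claimed $\approx$ separately; throughout put $h(s)=s\vee s^{\alpha/2}$, so $h(s)=s^{\alpha/2}$ for $s\le1$ and $h(s)=s$ for $s\ge1$, and note $h(x\wedge y)=(x\wedge y)\vee(x\wedge y)^{\alpha/2}$.

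\emph{Lower bound.} I would combine three facts: (i) $G_{(0,\infty)}\ge G^1_{(0,\infty)}$ since $e^{-t}\le1$; (ii) $G_{(0,\infty)}(x,y)\ge\frac2\alpha G^{gauss}_{(0,\infty)}(x,y)=\frac2\alpha(x\wedge y)$ by Lemma \ref{potential_lower} and \pref{gaussGreen1}; and (iii) if $x\wedge y\le1$ then $G_{(0,\infty)}(x,y)\gtrsim(x\wedge y)^{\alpha/2}$ — when $|x-y|\le3$ this is immediate from Lemma \ref{trivialbound} and (i), and when $|x-y|>3$ (so the larger of $x,y$ exceeds $\sqrt2$) one applies Lemma \ref{BHP1} to $G_{(0,\infty)}(\cdot,y)$, which is regular harmonic on $B(0,\sqrt2)\cap\H$ by Lemma \ref{continuity} and vanishes on $\H^c$, obtaining $G_{(0,\infty)}(x,y)\approx(x\wedge y)^{\alpha/2}\,G_{(0,\infty)}(\mathbf1,y)\gtrsim(x\wedge y)^{\alpha/2}$ by (ii) at the reference point. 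Since (ii)–(iii) always give $G_{(0,\infty)}\gtrsim h(x\wedge y)$, together with (i) this yields $G_{(0,\infty)}\gtrsim G^1_{(0,\infty)}+h(x\wedge y)$.

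\emph{Upper bound.} The short-time part of $R$ is harmless: $\int_0^1(1-e^{-t})p^{(0,\infty)}_t(x,y)\,dt\le\int_0^1 p^{(0,\infty)}_t(x,y)\,dt\le e\int_0^1 e^{-t}p^{(0,\infty)}_t(x,y)\,dt\le e\,G^1_{(0,\infty)}(x,y)$, so it remains to show $\int_1^\infty p^{(0,\infty)}_t(x,y)\,dt\lesssim h(x\wedge y)$. Assume $x\le y$. If $x<1$ I would first use Lemma \ref{BHP1} in the reverse (upper) direction, $G_{(0,\infty)}(x,y)\approx x^{\alpha/2}G_{(0,\infty)}(\mathbf1,y)$ for $y\ge\sqrt2$, reducing everything to a bound of the form $G_{(0,\infty)}(x,y)\lesssim x\wedge y$ for $x\wedge y\gtrsim1$. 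For the latter, split $[1,\infty)$ at $x^2$ and at $|x-y|^2$: on $[1,|x-y|^2]$ use $p^{(0,\infty)}_t\le p_t$ with Lemma \ref{transden2} (the free kernel is genuinely Gaussian-small for $t\ll|x-y|^2$) and the on-diagonal bound of Lemma \ref{transden_0}; on $[c\,(x\vee|x-y|)^2,\infty)$ use Lemma \ref{density} together with the survival-probability estimates of Lemmas \ref{Grzywny}, \ref{Grzywnyimprove}, which there produce the decay $x^{\alpha/2}t^{-1/2}$ (resp.\ $x\,t^{-1/2}$) that makes the integral converge to $O(x\wedge y)$.

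\emph{Main obstacle.} The delicate point is exactly the estimate $\int_1^\infty p^{(0,\infty)}_t(x,y)\,dt\lesssim x\wedge y$ in the bulk \emph{with constants independent of $x\vee y$}. The tail bounds of Lemmas \ref{Grzywny}, \ref{Grzywnyimprove} are not yet sharp — they carry logarithmic factors and do not see the distance $|x-y|$ — so used naively they only give a bound of order $x\vee y$, since the free-kernel contribution over $t\asymp|x-y|^2$ already costs $\Theta(|x-y|)$ and the killing must absorb it. Removing this loss is the heart of the matter: one must combine the killing with the heat-kernel bounds precisely in the window $t\asymp|x-y|^2$, and propagate the correct boundary behaviour $G_{(0,\infty)}(z,y)\lesssim h(z)$ from small $z$ outward using the harmonicity of $G_{(0,\infty)}(\cdot,y)$ and Lemma \ref{BHP1} at dyadic scales (alternatively, via the resolvent identity $R(x,y)=\int_{(0,\infty)}G^1_{(0,\infty)}(x,z)G_{(0,\infty)}(z,y)\,dz$, bootstrapped from a crude a priori polynomial bound on $G_{(0,\infty)}$ coming from Lemmas \ref{density} and \ref{Grzywny}). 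I expect this distance-independent bulk estimate to be the main technical hurdle; once it is in place the pieces assemble into $G_{(0,\infty)}(x,y)\approx G^1_{(0,\infty)}(x,y)+(x\wedge y)\vee(x\wedge y)^{\alpha/2}$.
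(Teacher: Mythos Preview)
Your outline tracks the paper's argument: lower bound from $G\ge G^1$, Lemma~\ref{potential_lower} with \pref{gaussGreen1}, and BHP near the boundary; upper bound by isolating a bounded-time piece dominated by $G^1_{(0,\infty)}$ and controlling the remaining tail. You have also located the real difficulty correctly: with only Lemmas~\ref{Grzywny}--\ref{Grzywnyimprove} in hand, Lemma~\ref{density} gives at best $\int_2^\infty p^{(0,\infty)}_t(x,y)\,dt\lesssim y$ when $y\ge\sqrt2$, not $x\wedge y$.

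Where the sketch falls short is in the remedy. ``Propagating $G_{(0,\infty)}(z,y)\lesssim h(z)$ from small $z$ outward via BHP at dyadic scales'' is circular as stated: BHP near the boundary yields $G_{(0,\infty)}(z,y)\approx z^{\alpha/2}G_{(0,\infty)}(1,y)$, which helps only once $G_{(0,\infty)}(1,y)\lesssim1$ is known --- exactly the bulk bound you seek. The resolvent identity $R=\int G^1(x,z)G(z,y)\,dz=\int G(x,z)G^1(z,y)\,dz$ fed with the crude bound $G\le V+Cy$ still returns a term of size $Cy$ (since $\int G^1(x,z)\,dz\le1$ and $\int zG^1(z,y)\,dz\asymp y$), so one iteration does not improve, and there is no evident contraction to sum the Neumann series. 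The paper closes the gap by a different, concrete device: work in the \emph{far} variable. Accept the crude $G_{(0,\infty)}(x,y)\le V(x,y)+Cy$ with $V=\int_0^6 p^{(0,\infty)}_t\,dt\le e^6 G^1_{(0,\infty)}$; fix $n<x\le n+1$ and use regular harmonicity of $v\mapsto G_{(0,\infty)}(x,v)$ on $B=(n+2,\infty)$ to write, for $y>n+2$,
\[
G_{(0,\infty)}(x,y)=E^y\big[X_{\tau_B}>0;\,G_{(0,\infty)}(x,X_{\tau_B})\big]\le E^yV(x,X_{\tau_B})+C(n+2),
\]
since $X_{\tau_B}\in(0,n+2]$ on that event. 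Integrating in the first variable over $(n,n+1)$ and using $\int_0^\infty V(\cdot,z)\le6$ gives $\int_n^{n+1}G_{(0,\infty)}(v,y)\,dv\le6+C(n+2)$; a Harnack comparison on $(n-1,n+2)$ (Lemma~\ref{GrzywnyRyznar}) then converts this to the pointwise bound $G_{(0,\infty)}(x,y)\le Cx$ for $1\le x\le y-2$. No distance-aware estimate in the window $t\asymp|x-y|^2$ is needed here; the whole point is to pull $y$ back near $x$ so that ``$+Cy$'' becomes ``$+Cx$''. Once this is established, the boundary case $x<1$ and the short-range case $|x-y|$ small go through exactly as you describe.
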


\begin{proof}
Throughout the whole proof we assume that $0<x\le y$. The proof
will rely on the estimates of $P^x(\tau_{(0,\infty)}> t)$ derived
in the previous section and the application of Lemma
\ref{density}.

 We proceed to estimate the Green function from above.
First we split the  integration 
\begin{eqnarray*}\int_0^\infty p_t^{{(0,\infty)}}(x,y)dt & =&
\int_0^{6} p_t^{{(0,\infty)}}(x,y)dt + \int_{6}^{\infty}
p_t^{{(0,\infty)}}(x,y)dt\\&=& V(x,y)+R(x,y).\end{eqnarray*}
We start with the estimation of the second integral. Due to Lemma
\ref{density}, \begin{eqnarray*}R(x,y)&=&3\int_{2}^{\infty}
p_{3t}^{{(0,\infty)}}(x,y)dt\\&\le& c\int_{2}^{\infty}
 P^x(\tau_{(0,\infty)}> t)P^y(\tau_{(0,\infty)}> t)\, \frac{dt}{ t^{1/2}}. \end{eqnarray*}
First consider the case $y< \sqrt{2}$. Then using
\pref{uboundtail} we have
$$R(x,y)\le C (xy)^{\alpha/2} \int_{2}^{\infty}  \frac {(\ln t)^2}{t} \frac{dt}{t^{1/2}}
\le C (xy)^{\alpha/2}.$$
If  $y\ge \sqrt{2}$, using (\ref{Grzywny1}) we estimate
\begin{eqnarray*}
R(x,y)&\le&c\int_{2}^{\infty}\left(P^y(\tau_{(0,\infty)}>
t)\right)^2\, \frac{dt}{t^{1/2}} \\&=& c \int_{2}^{y^2}
\left(P^y(\tau_{(0,\infty)}> t)\right)^2\, \frac{dt}{t^{1/2}}
 + c \int_{y^2}^\infty \left(P^y(\tau_{(0,\infty)}> t)\right)^2 \frac{dt}{t^{1/2}}\\
&\le& C\int_2^{y^2}\, \frac{dt}{t^{1/2}} +
C\int_{y^2}^\infty \left(\frac{y + \ln t}{t^{1/2}}\right)^2\, \frac{dt}{t^{1/2}}\\
&\le& C y.
\end{eqnarray*}
 Hence
\begin{equation}\label{upperbound}
G_{(0,\infty)}(x,y)\le  C  \left\{
\begin{array}{ll}
    V(x,y)+(x\,y)^{\alpha/2}, & \hbox{$y<1$,} \\
    V(x,y)+y, & \hbox{$y\ge 1$.} \\
\end{array}
\right.
 \end{equation}

 Let $B=(n+2,\infty)$, $n\in\mathbb{N}$. Now assume that   $n< x\le  n+1$ and $y\in B $.
   We claim that
   \begin{equation}\label{linBound}
 G_{{(0,\infty)}}(x,y)\le Cx, \end{equation}
 where  $C$ depends only on $\alpha$.  

Observe that,
\begin{eqnarray}\label{greenestlarge11}
\int_0^{\infty}V(x,y)dy&=& \int_{0}^{6}
\int_0^{\infty}p_{(0,\infty)}(t,x,y)dydt\nonumber\\&=&
\int_{0}^{6}P^{x}(\tau_{{(0,\infty)}}>t)dt\le 6.
\end{eqnarray}

Consider $h(v)=G_{{(0,\infty)}}(x,v),\ v\in B $. By Lemma
\ref{continuity} it is regular harmonic on  $B$.
Hence using the estimate (\ref{upperbound}) we obtain
  \begin{eqnarray*} G_{{(0,\infty)}}(x,y)&=& E^y G_{{(0,\infty)}}(x,X_{\tau_{B}})\\
  &=&  E^y \left[G_{{(0,\infty)}}(x,X_{\tau_{B}}) ;X_{\tau_{B}}\in (0,n+2)\right]\\
  &\le&  E^y V(x,X_{\tau_{B}})+ C(n+2).
\end{eqnarray*}
Integrating  $G_{{(0,\infty)}}(v,y)$ with respect to  $dv$  and
applying (\ref{greenestlarge11}) we obtain
\begin{equation}\label{greenintegral}\int_n^{n+1} G_{{(0,\infty)}}(v,y)dv \le 6 + C (n+2).\end{equation}
The final argument for proving (\ref{linBound}) will use Lemma
\ref{GrzywnyRyznar}. Take $D=(n-1,n+2),$ and recall that $y> n+2$
and $x \in (n,n+1)$ . Due to Lemma \ref{continuity} the Green
function  $G_{{(0,\infty)}}(u,y)$ is positive  regular harmonic on
$D$ as a function of  $u$.  By Harnack's inequality for harmonic
functions on $D$,  which follows from Lemma \ref{GrzywnyRyznar},
we arrive at
    $$G_{{(0,\infty)}}(x,y)\le C G_{{(0,\infty)}}(u,y), \ \ x,u\in (n,n+1),$$
which together with (\ref{greenintegral}) completes the proof of
the estimate
 $$ G_{{(0,\infty)}}(x,y) \le C x,\ \   1<x\le y-2.$$
Combining this with (\ref{upperbound}) we obtain
$$
G_{(0,\infty)}(x,y)\le C( V(x,y)+x),\quad  x\ge 1.
$$
 Since   $G^{gauss}_{(0,\infty)}(x,y)= x$ (see  (\ref{gaussGreen2})), then  by Lemma \ref{potential_lower} we  have that
$$G_{(0,\infty)}(x,y)\ge \frac2\alpha x.$$
Therefore we proved that
\begin{equation}\label{greenestlarge1}
G_{(0,\infty)}(x,y)\approx V(x,y)+x,\quad  x\ge 1.
\end{equation}
To estimate $V(x,y)$ we use
\begin{eqnarray*}\label{estimate111}V(x,y)&=&\int_0^{6} p_t^{{(0,\infty)}}(x,y)dt\nonumber \le e^6\int_0^{6}e^{-t} p_t^{{(0,\infty)}}(x,y)dt
 \\&\le& e^6\, G^1_{{(0,\infty)}}(x,y).
 \end{eqnarray*}
Next consider $x<1$ and $y\le 2$. By  (\ref{upperbound}) and Lemma
\ref{trivialbound} we get $$
G_{{(0,\infty)}}(x,y)\approx G^1_{{(0,\infty)}}(x,y)\approx
G^1_{{(0,\infty)}}(x,y)+ x^{\alpha/2}.
$$
Now assume that  $x<1$ and $y>2$. Again
$G_{{(0,\infty)}}(\cdot,y)$, by Lemma \ref{continuity},  is
regular harmonic on $(0,2)$, hence by BHP (see Lemma  \ref{BHP1}):
$$G_{{(0,\infty)}}(x,y)\approx G_{(0,\infty)}(1,y)x^{\alpha/2}.$$
Due to Theorem \ref{Green1est}, $G^1_{{(0,\infty)}}(1,y)\le C$ so
by  (\ref{greenestlarge1})    we have
$$G_{(0,\infty)}(1,y) \approx 1,$$
which implies
$$G_{{(0,\infty)}}(x,y)\approx x^{\alpha/2},\quad x<1,  y>2.$$
This completes the proof.
\end{proof}


\begin{rem}\label{Greenhalflineasymp} Let $x\le y$. Then we have
\begin{equation}
G_{(0,\infty)}(x,y)\approx \left\{
                  \begin{array}{ll}
                    G^1_{(0,\infty)}(x,y), & \hbox{$x\le 1$, $|x-y|<1$;} \\
                    G^1_{(0,\infty)}(x,y)+x, & \hbox{$x>1$, $|x-y|<1$ ;} \\
                    x\vee x^{\alpha/2}, & \hbox{$|x-y|\ge 1$.}
                  \end{array}
                \right.
\end{equation}
\end{rem}

\section{Exit time properties}
\setcounter{equation}{0}

In this section we derive optimal estimates of the expected value
of the exit time from a ball of arbitrary radius. Then, which
seems the most important result of this section, we provide
optimal estimates of the tail distribution for the exit time from
a half-space. That is we improve the bounds obtained in Lemmas
\ref{Grzywny} and \ref{Grzywnyimprove}. They will play a crucial
role in the next section, where we deal with the Green function of
a half-space in $\Rd$. We start with the one-dimensional case.
\begin{prop}\label{exittime}
For $x\in (0,R)$ we have
$$
E^x\tau_{(0,R)}\approx (x^{\alpha/2}\vee x)
\left((R-x)^{\alpha/2}\vee (R-x)\right).
$$
\end{prop}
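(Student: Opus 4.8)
The plan is to obtain the estimate by integrating the Green function $G_{(0,R)}(x,y)$ over $y\in(0,R)$, using the identity $E^x\tau_{(0,R)}=\int_0^R G_{(0,R)}(x,y)\,dy$, together with the comparison $G_{(0,R)}(x,y)\ge \frac{2}{\alpha}G^{gauss}_{(0,R)}(x,y)$ from Lemma \ref{potential_lower} for the lower bound, and domain-monotonicity $G_{(0,R)}(x,y)\le G_{(0,\infty)}(x,y)$ combined with Theorem \ref{green} (via Remark \ref{Greenhalflineasymp}) for the upper bound. By symmetry of the interval $(0,R)$ about its midpoint, I may assume $x\le R/2$, so that $x^{\alpha/2}\vee x = x^{\alpha/2}$ when $x\le 1$ and $=x$ when $x\ge 1$; the claimed right-hand side then reduces to a clean expression in $x$ and $R$.

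For the \emph{lower bound}, integrating $G^{gauss}_{(0,R)}(x,y)=\frac{x(R-y)\wedge y(R-x)}{R}$ from \pref{gaussGreen3} over $y$ gives $E^x\tau^{gauss}_{(0,R)}=x(R-x)$, hence $E^x\tau_{(0,R)}\ge \frac{2}{\alpha}x(R-x)$. This already settles the case $x\ge 1$ (where $R-x\ge R/2\ge 1$, so $x(R-x)\approx (x^{\alpha/2}\vee x)((R-x)^{\alpha/2}\vee(R-x))$). For $x<1$ the Gaussian bound is too weak, and I would instead use $G_{(0,R)}(x,y)\gtrsim x^{\alpha/2}$ for $y$ in a fixed interval bounded away from both endpoints: this follows from the Boundary Harnack Principle (Lemma \ref{BHP1}, after rescaling) or directly from the already-established lower bound in Theorem \ref{green}, namely $G_{(0,\infty)}(x,y)\approx x^{\alpha/2}$ for $x<1$, $y>2$, combined with $G_{(0,R)}(x,y)\ge G_{(0,R/2)}(x,y)$ type localization (or the explicit comparison with the stable Green function of $(0,R)$ for $R\le 3$, and a scaling/harmonicity argument for $R>3$). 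Integrating $x^{\alpha/2}$ over a set of positive length gives $E^x\tau_{(0,R)}\gtrsim x^{\alpha/2}$, which combined with $\gtrsim x(R-x)$ yields $\gtrsim x^{\alpha/2}(R-x)$ since $R-x\approx R$ and $x(R-x)/x^{\alpha/2}=x^{1-\alpha/2}(R-x)$ — wait, one must be slightly careful: for $x<1$ one has $x^{\alpha/2}\vee x=x^{\alpha/2}$ and $(R-x)^{\alpha/2}\vee(R-x)=(R-x)$ (as $R-x\ge 1$), so the target is $x^{\alpha/2}(R-x)\approx x^{\alpha/2}R$; the bound $\gtrsim x(R-x)$ handles the regime $x^{1-\alpha/2}\gtrsim 1$, i.e. it is not enough alone, so the sharper input is really $G_{(0,R)}(x,y)\gtrsim x^{\alpha/2}\cdot\big(\text{Gaussian-type factor in }y\big)$, obtained by applying BHP near the endpoint $0$ to the regular harmonic function $y\mapsto G_{(0,R)}(x,y)$ and then integrating.

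For the \emph{upper bound}, split $\int_0^R G_{(0,R)}(x,y)\,dy \le \int_0^R G_{(0,\infty)}(x,y)\,dy$ is divergent, so domain monotonicity against $(0,\infty)$ is too lossy; instead I would use $G_{(0,R)}(x,y)\le G_{(0,\infty)}(x,y)$ only for $y$ in a bounded region and exploit $G_{(0,R)}(x,y)\le E^x[\tau_{(0,R)}<\infty;\,]$-type bounds near the far endpoint, or more efficiently bound $E^x\tau_{(0,R)}$ directly. The cleanest route: for $x\le R/2$, by the strong Markov property at $\tau_{(0,R/2\vee 2x\vee\ldots)}$ reduce to the half-line estimate — specifically, $E^x\tau_{(0,R)} = E^x\tau_{(0,\infty)}$ is infinite, so that fails too; rather, use $\tau_{(0,R)}\le \tau_{(0,\infty)}$ pointwise is false in the wrong direction. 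The correct approach is: write $E^x\tau_{(0,R)}=\int_0^R G_{(0,R)}(x,y)dy$ and bound $G_{(0,R)}(x,y)\le G_{(0,\infty)}(x,y)$, then use the explicit form of $G_{(0,\infty)}$ from Remark \ref{Greenhalflineasymp}: for $y\le x$ (so $|x-y|$ controlled) and for $|x-y|<1$ the contribution is $\int G^1_{(0,\infty)}(x,y)dy + (\text{length})\cdot(x\vee\ldots)$, which is $O(1)$ plus $O(x)$ by \pref{intgral}-type integrability of $G^1$; for $|x-y|\ge 1$ with $y<R$ we have $G_{(0,\infty)}(x,y)\approx x\vee x^{\alpha/2}$, and integrating over $y\in(x+1,R)$ gives $(x\vee x^{\alpha/2})(R-x)$, which is exactly the target. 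Summing these pieces gives $E^x\tau_{(0,R)}\lesssim (x^{\alpha/2}\vee x)((R-x)^{\alpha/2}\vee(R-x))$ after checking that the $O(1)$ and $O(x)$ terms are absorbed (they are, since $R-x\gtrsim 1$ forces the RHS to be $\gtrsim x^{\alpha/2}\vee x\gtrsim$ those terms).

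The \textbf{main obstacle} is getting the sharp lower bound $\gtrsim x^{\alpha/2}(R-x)$ for small $x$ uniformly in $R$: the Gaussian comparison gives only $x(R-x)$, which is off by the factor $x^{1-\alpha/2}$, so one genuinely needs the relativistic/stable boundary behaviour $\delta^{\alpha/2}$ near $0$. This requires applying the Boundary Harnack Principle (Lemma \ref{BHP1}) to $y\mapsto G_{(0,R)}(x,y)$ near $y=0$ — valid since this function is regular harmonic there and vanishes on $(-\infty,0]$ — to get $G_{(0,R)}(x,y)\approx G_{(0,R)}(x,y_0)\,x^{\alpha/2}$ for $y$ near $0$ (with $y_0$ a reference point), and then lower-bounding $G_{(0,R)}(x,y_0)\gtrsim R-x\approx R$ via the Gaussian comparison at the reference point. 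Dovetailing the BHP-in-$y$ with the already-known half-line estimates and handling the full range $R\in(0,\infty)$ (small $R$ via stable comparison, large $R$ via scaling) is the delicate bookkeeping part of the argument.
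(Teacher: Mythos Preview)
Your overall plan---upper bound via $G_{(0,R)}\le G_{(0,\infty)}$ and Remark~\ref{Greenhalflineasymp}, lower bound for $x\ge 1$ via the Gaussian comparison of Lemma~\ref{potential_lower}, and BHP to capture the $x^{\alpha/2}$ behaviour for $x<1$---is exactly the paper's strategy, and the upper-bound and $x\ge 1$ lower-bound parts are fine as written.

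The $x<1$ lower-bound paragraph, however, is tangled and as stated does not work. You propose applying BHP to $y\mapsto G_{(0,R)}(x,y)$ near $y=0$, but (i) for $x<1$ this function is \emph{not} harmonic near $y=0$ (the singularity sits at $y=x$, which is in that region), (ii) BHP in the $y$ variable would produce a factor $y^{\alpha/2}$, not $x^{\alpha/2}$, and (iii) the bound ``$G_{(0,R)}(x,y_0)\gtrsim R-x\approx R$'' is false---Gaussian comparison gives at best $G_{(0,R)}(x,y_0)\gtrsim x(R-y_0)/R\lesssim x<1$. What you actually need is BHP in the \emph{first} variable: for each fixed $y>2$, the function $x\mapsto G_{(0,R)}(x,y)$ is regular harmonic on $(0,2)$ vanishing on $(-\infty,0]$, so $G_{(0,R)}(x,y)\approx x^{\alpha/2}G_{(0,R)}(1,y)$; integrating over $y\in(2,R)$ and using the already-established estimate $E^1\tau_{(0,R)}\approx R$ then gives $E^x\tau_{(0,R)}\gtrsim x^{\alpha/2}R$.

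The paper sidesteps the Green-function manipulation entirely for this step: it applies the strong Markov property at $\tau_{(0,2)}$ to write $E^x\tau_{(0,R)}=s(x)+E^x\tau_{(0,2)}$ with $s(x)=E^x\big[E^{X_{\tau_{(0,2)}}}\tau_{(0,R)}\big]$, observes that $s$ is regular harmonic on $(0,2)$ vanishing on the complement, and applies BHP directly to $s$ to get $s(x)\approx s(1)x^{\alpha/2}$; then $s(1)+1=E^1\tau_{(0,R)}-E^1\tau_{(0,2)}+1\approx R$ by the $x\ge 1$ case. This is the same mechanism as the corrected version of your argument, but packaged at the level of the exit time rather than the Green function, which is cleaner and avoids the harmonicity-of-$G$ bookkeeping.
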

\begin{proof}
If $R\le 3$ then from Lemma \ref{GrzywnyRyznar} we have
$$E^x\tau_{(0,R)}\approx (x(R-x))^{\alpha/2}.$$

Throughout the rest of the proof we  suppose that $R>3$. Assume
$x\le R/2$. First we prove the upper bound. By Theorem \ref{green}
and \pref{intgral} we obtain
\begin{eqnarray} E^x\tau_{(0,R)}
&=&\int^R_0 G_{(0,R)}(x,y)dy\le\int^R_0 G_{(0,\infty)}(x,y)dy\nonumber\\
&\approx& \int^R_0 G^1_{(0,\infty)}(x,y)dy +\int^x_0
(y^{\alpha/2}\vee y) dy + (R-x)(x^{\alpha/2}\vee
x)\nonumber\\&\le&2(\alpha\Gamma(\alpha/2))^{-1}x^{\alpha/2}+x(x^{\alpha/2}\vee
x)+(R-x)(x^{\alpha/2}\vee x)\nonumber\\&\le& c R (x^{\alpha/2}\vee
x). \label{exittimeproof1}
\end{eqnarray}

 Now, we deal with the lower bound. By Lemma \ref{potential_lower},
$$G_{(0,R)}(x,y)\ge \frac2\alpha G^{gauss}_{(0,R)}(x,y).$$
Denote the first exit time of $(0,R)$ for the Brownian motion by
$\tau^{gauss}_{(0,R)}$. It is well known that
$E^x\tau^{gauss}_{(0,R)}=\frac12\,x\,(R-x)$ (eg. see \cite{Du}).
Then we have \begin{eqnarray*}E^x\tau_{(0,R)} &=&\int_0^R
G_{(0,R)}(x,y)dy \ge \frac2\alpha\int_0^R G^{gauss}_{(0,R)}(x,y)
dy\\ &=& \frac2\alpha
E^x\tau^{gauss}_{(0,R)}=\frac1\alpha\,x\,(R-x).\end{eqnarray*}
Hence we get, for $1\le x\le R/2$,
\begin{equation}\label{exittimeproof2}
E^x\tau_{(0,R)}\approx x\,R
\end{equation}
Let $x<1$. Notice that by the Strong Markov Property
$$E^x\tau_{(0,R)}=s(x)+E^x\tau_{(0,2)},$$
where $s(x)=E^x(E^{X_{\tau_{(0,2)}}}\tau_{(0,R)})$ is regular
harmonic on the interval $(0,2)$ vanishing on its complement.
Therefore by BHP (see Lemma \ref{BHP1}) we obtain
$$s(x)\approx s(1)x^{\alpha/2}.$$ Moreover due to Lemma \ref{GrzywnyRyznar} we have
$$E^x\tau_{(0,2)} \approx x^{\alpha/2}.$$
 This yields 
$$E^x\tau_{(0,R)} \approx  (s(1) +1)
x^{\alpha/2}.$$ Noting that $s(1)=E^1\tau_{(0,R)}-E^1\tau_{(0,2)}$
and observing  that  \pref{exittimeproof2} implies
$$(E^1\tau_{(0,R)}-E^1\tau_{(0,2)})+1 \approx R,$$we obtain
\begin{equation}\label{exittimeproof3}E^x\tau_{(0,R)}\approx  R x^{\alpha/2},\quad  0< x<1.
\end{equation}
Putting together \pref{exittimeproof1}, \pref{exittimeproof2} and
\pref{exittimeproof3} we obtain
$$
E^x\tau_{(0,R)}\approx  R\, (x^{\alpha/2}\vee x),\quad \textrm{
for } x\le R/2.
$$
 By symmetry we have $E^x\tau_{(0,R)}=E^{R-x}\tau_{(0,R)}$, which ends the
 proof.
\end{proof}

Now we derive bounds for the expected exit times from balls  in
the  multidimensional case.

\begin{prop}\label{exittimeRd}
For $x\in B(0,R)=\{v\in \Rd:|v|<R\}$ we have
$$
 E^x\tau_{B(0,R)}\approx
\left((R-|x|)^{\alpha/2}\vee (R-|x|)\right)\left(R\vee
R^{\alpha/2}\right).
$$
\end{prop}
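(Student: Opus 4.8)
The plan is to reduce everything to one-dimensional facts and to the Brownian motion, exploiting the rotational invariance of $B(0,R)$: since $x\mapsto E^x\tau_{B(0,R)}$ depends only on $|x|$, I may assume $x=(0,\dots,0,|x|)$, so that $\mathrm{dist}(x,\partial B(0,R))=R-|x|$, and finiteness of $E^x\tau_{B(0,R)}$ will be a by-product of the upper bound below. For radii below a threshold $R_0=R_0(\alpha,d)$ fixed later, $B(0,R)$ is $C^{1,1}$ with $\rho_0=R$ and $\gamma=2$, so (\ref{comparisonR}) (with the absolute constants available for balls and $C(\mathrm{diam})\le C(2R_0)$) gives $G_{B(0,R)}\approx G^{stable}_{B(0,R)}$ with constants depending only on $R_0$; integrating and invoking the classical identity $E^x\tau^{stable}_{B(0,R)}\approx(R^2-|x|^2)^{\alpha/2}=(R-|x|)^{\alpha/2}(R+|x|)^{\alpha/2}$ settles the claim on this range, once one checks that $(R+|x|)^{\alpha/2}\approx R\vee R^{\alpha/2}$ and $(R-|x|)^{\alpha/2}\approx(R-|x|)^{\alpha/2}\vee(R-|x|)$ for $R<R_0$. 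From now on $R\ge R_0$, in particular $R>1$ and $R\vee R^{\alpha/2}=R$.

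For the upper bound I compare $B(0,R)$ with the slab $S=\{v\in\Rd:|v_d|<R\}\supset B(0,R)$. The coordinate process $X^{(d)}_t=B^{(d)}_{T_\alpha(t)}$ is a one-dimensional relativistic $\alpha$-stable process with $m=1$ (its characteristic function is $e^{t}e^{-t(\xi^2+1)^{\alpha/2}}$), and $X_t\in S$ exactly when $X^{(d)}_t\in(-R,R)$; hence $\tau_{B(0,R)}\le\tau_S$ and $E^x\tau_S=E^{|x|+R}\tau_{(0,2R)}$. Proposition \ref{exittime} then gives $E^x\tau_{B(0,R)}\le E^x\tau_S\approx\bigl((|x|+R)^{\alpha/2}\vee(|x|+R)\bigr)\bigl((R-|x|)^{\alpha/2}\vee(R-|x|)\bigr)$, which is $\approx R\,\bigl((R-|x|)^{\alpha/2}\vee(R-|x|)\bigr)$ since $R\le|x|+R\le 2R$ and $R>1$; this is the required upper bound.

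For the lower bound I split on $R-|x|$. If $R-|x|\ge1$, Lemma \ref{potential_lower} together with the well-known value $E^x\tau^{gauss}_{B(0,R)}\approx R^2-|x|^2$ yields $E^x\tau_{B(0,R)}\ge\tfrac2\alpha E^x\tau^{gauss}_{B(0,R)}\gtrsim(R-|x|)(R+|x|)\gtrsim(R-|x|)R$, which matches the claim since then $(R-|x|)^{\alpha/2}\vee(R-|x|)=R-|x|$. If $R-|x|<1$ I invoke the Boundary Harnack Principle. Put $z_0=(0,\dots,0,R)$, $x_0=(0,\dots,0,R-1)$ and $F=B(z_0,2)\cap B(0,R)$, and note $x\in B(z_0,1)\cap B(0,R)$ since $|x-z_0|=R-|x|<1$. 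By the strong Markov property at $\tau_F$, $E^v\tau_{B(0,R)}=E^v\tau_F+s(v)$ with $s(v)=E^v\bigl[E^{X_{\tau_F}}\tau_{B(0,R)}\bigr]$ nonnegative, finite (by the slab estimate), regular harmonic on $F$ and vanishing on $B(0,R)^c$ (same argument as in Lemma \ref{continuity}). Lemma \ref{BHP1} gives $s(x)\approx s(x_0)(R-|x|)^{\alpha/2}$, and since $E^x\tau_{B(0,R)}\ge s(x)$ it remains to show $s(x_0)\gtrsim R$. But $s(x_0)=E^{x_0}\tau_{B(0,R)}-E^{x_0}\tau_F\ge\tfrac2\alpha E^{x_0}\tau^{gauss}_{B(0,R)}-E^{x_0}\tau_{B(z_0,2)}\gtrsim(2R-1)-C$, where $E^{x_0}\tau_{B(z_0,2)}\le C$ is an absolute constant by the already-treated moderate-radius case; choosing $R_0$ large enough makes the right-hand side $\gtrsim R$. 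Combining the three regimes with the small-radius case finishes the proof.

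The step I expect to be the main obstacle is the near-boundary lower bound ($R-|x|<1$): there the Brownian comparison is useless, so one genuinely needs the BHP reduction to the harmonic profile $s$, together with the estimate $s(x_0)\gtrsim R$ for the ``deep interior'' contribution, and it is precisely this last estimate that forces the auxiliary constant $R_0$ and the separate treatment of balls of moderate radius. The upper bound, by contrast, drops out cleanly from the slab comparison and the one-dimensional Proposition \ref{exittime}.
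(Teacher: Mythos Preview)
Your proof is correct and follows essentially the same route as the paper: the upper bound via comparison with a slab and Proposition~\ref{exittime}, the interior lower bound via Lemma~\ref{potential_lower} and the Brownian formula, and the near-boundary lower bound via the strong Markov decomposition $E^x\tau_{B(0,R)}=E^x\tau_F+s(x)$ together with the BHP of Lemma~\ref{BHP1}. The only differences are cosmetic: the paper fixes the small-radius threshold at $3$ and, in the near-boundary step, retains the term $E^x\tau_F\ge E^x\tau_{B(x_0,1)}\approx(R-|x|)^{\alpha/2}$ so as to obtain $(s(x_0)+1)(R-|x|)^{\alpha/2}$ and then argues $s(x_0)+1\ge cR$, whereas you drop this contribution and instead absorb the constant by choosing $R_0$ large enough that $s(x_0)\gtrsim R$ directly.
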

\begin{proof}  Let $\tau^{stable}_{B(0,R)}$ be the first exit time from $B(0,R)$ for the
$\alpha$-stable isotropic  process. By the result of Getoor
\cite{Ge} we have
$E^x\tau^{stable}_{B(0,R)}=c(R^2-|x|^2)^{\alpha/2}$ for
$c=c(\alpha,d)$.

 First assume that $R\le 3$. Then from \pref{comparisonR}  we obtain
$$E^x\tau_{B(0,R)}\approx E^x\tau^{stable}_{B(0,R)}=c(R^2-|x|^2)^{\alpha/2}\approx (R-|x|)^{\alpha/2}\,R^{\alpha/2},$$
which completes the proof in this case.

Next suppose that $R>3$. Let $z = x/|x|$ if $x\neq 0$ and
$z=(1,0,\ldots,0)$ if $x=0$. We now take $S_R= \{v:|\left\langle
z,v\right\rangle|<R\}$. The process $\left\langle
z,X_t\right\rangle$ is the one-dimensional relativistic process
(with the same parameter) which starts from $|x|$. Note that
$$E^x\tau_{B(0,R)}\le E^x\tau_{S_R}.$$ By the one-dimensional result (see Lemma \ref{exittime}) we get the upper bound.

For $|x|\le R-1$ we get the lower bound by using Lemma
\ref{potential_lower} and the result for the Brownian motion:
$E^x\tau^{gauss}_{B(0,R)}=\frac1{2d}(R^2-|x|^2)$ (see \cite{Du}).
Namely
\begin{eqnarray} E^x\tau_{B(0,R)}&=&\int_{B(0,R)}G_{B(0,R)}(x,y)dy\ge  \frac2{\alpha}\int_{B(0,R)}G^{gauss}_{B(0,R)}(x,y)dy\nonumber\\
&=& \frac2{\alpha} E^x\tau^{gauss}_{B(0,R)}= \frac1{\alpha
d}(R^2-|x|^2).\label{lowerextG} \end{eqnarray}
 To complete the proof
we need to consider $R-1\le |x|\le R$. The conclusion  will follow
in the usual way from BHP (see Lemma \ref{BHP1}) and the bound
above  for $|x|= R-1$. We may and do assume that
$x=(0,\dots,0,|x|)$. Denote  $x_0= (0,\dots,0,R-1)$ and $z_0=
(0,\dots,0,R)$.  Let $$F= B(0,R)\cap B(z_0,2)$$ and
$$s(x)=E^xE^{X(\tau_F)}\tau_{B(0,R)}.$$
Observe that $s(x)$ is a positive regular harmonic function on $F$
satisfying the assumptions of the second part of Lemma \ref{BHP1}
hence
$$ s(x)\approx s(x_0) (R-|x|)^{\alpha/2}.$$
Next, by the Strong Markov Property
\begin{eqnarray}E^x\tau_{B(0,R)}&=&s(x)+E^x\tau_{F}\ge s(x)+E^x\tau_{B(x_0,1)}\nonumber\\
&\approx&
s(x_0) (R-|x|)^{\alpha/2}+E^x\tau^{stable}_{B(x_0,1)}\nonumber\\
&\approx&(s(x_0)+1) (R-|x|)^{\alpha/2}\nonumber\\
&=&(E^{x_0}\tau_{B(0,R)}-E^{x_0}\tau_F+1)(R-|x|)^{\alpha/2}\nonumber\\
&\ge& c \,R (R-|x|)^{\alpha/2}\label{exitmefinal} .
\end{eqnarray}
The equivalence $E^x\tau_{B(x_0,1)} \approx
E^x\tau^{stable}_{B(x_0,1)}$ follows from (\ref{comparisonR}) and
$$E^{x_0}\tau_{B(0,R)}-E^{x_0}\tau_F+1 \ge c R$$ follows from
(\ref{lowerextG}). Combining  (\ref{lowerextG}) and
(\ref{exitmefinal}) we arrive at the desired lower bound.
\end{proof}

Now, we recall the Ikeda-Watanabe formula \cite{IW} which provides
a relationship between the Green function and the Poisson kernel.
Assume $D\subset \Rd$ is a nonempty open set and $E$ is a Borel
set such that $\textrm{dist}(D,E)>0$, then we have
\begin{equation}\label{Ikeda-WatanabeFormula}P^x(X(\tau_D)\in E, \tau_D<\infty)=\int_D
G_D(x,y)\nu(E-y)dy,\ x\in D.\end{equation} The following
generalization of the Ikeda-Watanabe formula was proved in
\cite{KS}:
\begin{equation}\label{Ikeda-WatanabeFormula2}
P^x(X(\tau_D)\in E,t_1<\tau_D<t_2)=\int_D
\int^{t_2}_{t_1}p^D_t(x,y)dt\nu(E-y)dy,
\end{equation}where $ 0\le t_1<t_2$, $x\in D$.
For $D$ which satisfies the outer cone property
  we have $P^x(X_{\tau_D}\in\partial D, \tau_D<\infty)=0$ (see \cite{KS}).
   Therefore the above formulas are  true for all sets $E\subset
   D^c$ for such $D$. In particular,  for sets studied in this paper  as  balls or half-spaces,
   the process does not hit the boundary, when exiting a set.

As a consequence of formula \pref{Ikeda-WatanabeFormula} we have
the following lemma which proof is omitted.
\begin{lem}\label{poissonKernel}
Let $D\subset \Rd$ be a bounded open set then
$$P_D(x,z)\le E^x \tau_D \sup_{v\in D}\nu(z-v),\ z\in (\overline{D})^c, x\in D.$$
Moreover, if $dist(z,D)\ge 1$ then
$$P_D(x,z)\le C E^x \tau_D e^{-dist(z,D)}.$$
\end{lem}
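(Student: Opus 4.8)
The plan is to identify the Poisson kernel $P_D(x,z)$, for $z\in(\overline D)^c$, as the integral kernel furnished by the Ikeda--Watanabe formula \pref{Ikeda-WatanabeFormula}, and then estimate it crudely. Fix $x\in D$. For any Borel set $E\subset(\overline D)^c$ with $dist(D,E)>0$ we may write $\nu(E-y)=\int_E\nu(z-y)\,dz$; substituting this into \pref{Ikeda-WatanabeFormula} and using Fubini gives
$$P^x(X_{\tau_D}\in E,\ \tau_D<\infty)=\int_E\Big(\int_D G_D(x,y)\,\nu(z-y)\,dy\Big)dz.$$
Since every point of $(\overline D)^c$ has a neighbourhood at positive distance from $D$, this shows that the exit distribution restricted to $(\overline D)^c$ is absolutely continuous with density
$$P_D(x,z)=\int_D G_D(x,y)\,\nu(z-y)\,dy,\qquad z\in(\overline D)^c.$$

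For the first estimate I would pull $\sup_{v\in D}\nu(z-v)$ out of the integral and use
$$\int_D G_D(x,y)\,dy=\int_0^\infty\!\!\int_D p^D_t(x,y)\,dy\,dt=\int_0^\infty P^x(\tau_D>t)\,dt=E^x\tau_D,$$
which immediately yields $P_D(x,z)\le E^x\tau_D\,\sup_{v\in D}\nu(z-v)$.

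For the second estimate, assume $dist(z,D)\ge1$, so that $|z-v|\ge dist(z,D)\ge1$ for every $v\in D$. By the explicit form \pref{levymeasure} of the Lévy density together with the large-argument asymptotics \pref{asympt_infty}, the function $r\mapsto r^{1/2}e^{r}K_{(d+\alpha)/2}(r)$ is continuous on $[1,\infty)$ with a finite limit as $r\to\infty$, hence bounded there; therefore $\nu(w)\le C|w|^{-(d+\alpha+1)/2}e^{-|w|}\le C e^{-|w|}$ for all $|w|\ge1$, the last step because the exponent $-(d+\alpha+1)/2$ is negative. Consequently $\sup_{v\in D}\nu(z-v)\le C\sup_{v\in D}e^{-|z-v|}=C\,e^{-\inf_{v\in D}|z-v|}=C\,e^{-dist(z,D)}$, and plugging this into the first estimate gives the second.

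There is no real obstacle here: the lemma is a direct consequence of the Ikeda--Watanabe formula combined with the exponential decay of $\nu$ at infinity recorded in \pref{levymeasure} and \pref{asympt_infty}. The only mildly delicate point is the passage from \pref{Ikeda-WatanabeFormula} to the pointwise identity for $P_D(x,z)$, which is immediate in our situation because $z$ is at positive distance from $\overline D$, so no discussion of boundary regularity of $D$ is needed.
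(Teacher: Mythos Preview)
Your proof is correct and follows exactly the route the paper indicates: the paper states that the lemma is a direct consequence of the Ikeda--Watanabe formula \pref{Ikeda-WatanabeFormula} and omits the proof. Your derivation---writing $P_D(x,z)=\int_D G_D(x,y)\nu(z-y)\,dy$, pulling out $\sup_{v\in D}\nu(z-v)$, using $\int_D G_D(x,y)\,dy=E^x\tau_D$, and invoking \pref{levymeasure} with \pref{asympt_infty} for the exponential bound on $\nu$---is precisely the intended argument.
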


\begin{prop} \label{MRestimate}For $0<x<R$ we have
$$P^x(\tau_{(0,R)}<\tau_{(0,\infty)})
\approx  \frac{x^{\alpha/2}\vee x}{R^{\alpha/2}\vee R}.$$
\end{prop}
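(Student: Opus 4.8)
\emph{Plan.} Put $\sigma=\tau_{(0,R)}$ and note that $\{\sigma<\tau_{(0,\infty)}\}=\{X_\sigma\ge R\}$ up to a null set: on $\{X_\sigma\ge R\}$ the path stays positive, so $\tau_{(0,\infty)}>\sigma$, while on $\{X_\sigma\le 0\}$ one has $\tau_{(0,\infty)}=\sigma$ (and the process has no atom at $0$ or $R$ on exit). Thus the claim is $f(x):=P^x(X_\sigma\ge R)\approx\frac{x^{\alpha/2}\vee x}{R^{\alpha/2}\vee R}$. When $R\le 3$ this is immediate: Lemma \ref{GrzywnyRyznar} gives $P^x(X_\sigma>R)\approx(x/R)^{\alpha/2}$, and $t^{\alpha/2}\vee t\approx t^{\alpha/2}$ for $0<t\le 3$. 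So assume $R>3$, when $R^{\alpha/2}\vee R=R$; it suffices to show $f(x)\approx x/R$ for $1\le x<R$ and $f(x)\approx x^{\alpha/2}/R$ for $0<x<1$.

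\emph{Upper bound.} The key point is that the coordinate process $X_t$ is a centered, square‑integrable L\'evy process (symmetry plus $\int_{\Rd}u^2\nu(u)\,du<\infty$, which follows from \pref{levymeasure}, \pref{asympt0}, \pref{asympt_infty}), hence a martingale. Since $E^x\sigma<\infty$ by Proposition \ref{exittime}, $X_{\sigma\wedge t}$ is $L^2$‑bounded ($E^x X_{\sigma\wedge t}^2=x^2+cE^x[\sigma\wedge t]\le x^2+cE^x\sigma$), so $E^x X_\sigma=x$. As $X_\sigma\notin(0,R)$,
\[
x=E^x[X_\sigma;X_\sigma\ge R]+E^x[X_\sigma;X_\sigma\le 0]\ge Rf(x)-E^x[-X_\sigma;X_\sigma\le 0].
\]
On $\{X_\sigma\le 0\}$ one has $\sigma=\tau_{(0,\infty)}$ and $X_\sigma=X_{\tau_{(0,\infty)}}$, so $E^x[-X_\sigma;X_\sigma\le 0]\le E^x[-X_{\tau_{(0,\infty)}}]$. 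This is bounded uniformly in $x$: the Ikeda--Watanabe formula \pref{Ikeda-WatanabeFormula} (which applies to the half‑line by the outer cone property) together with Fubini give
\[
E^x[-X_{\tau_{(0,\infty)}}]=\int_0^\infty G_{(0,\infty)}(x,y)\,\psi(y)\,dy,\qquad P^x(X_{\tau_{(0,\infty)}}<0)=\int_0^\infty G_{(0,\infty)}(x,y)\,\overline\nu(y)\,dy,
\]
where $\overline\nu(y)=\int_y^\infty\nu(u)\,du$ and $\psi(y)=\int_y^\infty(u-y)\nu(u)\,du$; from $\nu(u)\approx u^{-1-\alpha}$ as $u\to 0$ and $\nu(u)\le Ce^{-u}$ for $u\ge 1$ one gets $\psi(y)\le C\overline\nu(y)$ for all $y>0$ (bounded mean residual life of $\nu|_{(0,\infty)}$). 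Hence $E^x[-X_{\tau_{(0,\infty)}}]\le C\,P^x(X_{\tau_{(0,\infty)}}<0)\le C$, so $Rf(x)\le x+C$ and $f(x)\le (1+C)x/R$ for $x\ge 1$.

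\emph{Lower bound.} By Ikeda--Watanabe and Lemma \ref{potential_lower},
\[
f(x)=\int_0^R G_{(0,R)}(x,y)\,\overline\nu(R-y)\,dy\ \ge\ \frac2\alpha\int_0^R G^{gauss}_{(0,R)}(x,y)\,\overline\nu(R-y)\,dy.
\]
If $x\le R-1$, restrict to $y\in(R-1,R)$: there $x\le y$, so $G^{gauss}_{(0,R)}(x,y)=x(R-y)/R$ by \pref{gaussGreen3}, giving $f(x)\ge\frac{2x}{\alpha R}\int_0^1 s\,\overline\nu(s)\,ds=c\,x/R$ with $c>0$ (the integral is positive and, since $\overline\nu(s)\approx s^{-\alpha}$ and $\alpha<2$, finite). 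If $R-1<x<R$, bound $f(x)\ge P^x(X_{\tau_{(R-2,R)}}\ge R)=P^{x-(R-2)}(X_{\tau_{(0,2)}}\ge 2)$ by translation invariance and $(R-2,R)\subset(0,R)$; since $x-(R-2)\in(1,2)$, Lemma \ref{GrzywnyRyznar} gives $f(x)\gtrsim 1\ge x/R$. Hence $f(x)\gtrsim x/R$ for all $x\in(0,R)$, and combined with the upper bound $f(x)\approx x/R$ on $[1,R)$; in particular $f(1)\approx 1/R$.

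\emph{The strip $0<x<1$ and the main obstacle.} Since $(0,\sqrt2)\subset(0,R)$, the strong Markov property shows $f$ is regular harmonic in $(0,\sqrt2)$, nonnegative, and vanishing on $(-\infty,0]$; the one‑dimensional boundary Harnack principle (Lemma \ref{BHP1}) then gives $f(x)\approx f(1)x^{\alpha/2}\approx x^{\alpha/2}/R$ for $0<x<1$, which completes the proof. The main obstacle is the uniform overshoot estimate $E^x[-X_{\tau_{(0,\infty)}}]\le C$ (and the integrability needed to justify optional stopping for the martingale $X_t$); once that is in hand, everything else is a bookkeeping combination of Lemmas \ref{GrzywnyRyznar}, \ref{potential_lower}, \ref{BHP1} and the Ikeda--Watanabe formula.
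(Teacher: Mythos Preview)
Your proof is correct and takes a genuinely different route from the paper's.

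The paper argues via the half-line Green function: for the upper bound it uses that $G_{(0,\infty)}(\cdot,2R)$ is regular harmonic on $(0,R)$ together with the estimate $G_{(0,\infty)}(z,2R)\gtrsim R$ for $z>R$ (from Remark~\ref{Greenhalflineasymp}), and for the lower bound it uses harmonicity of $G_{(0,\infty)}(\cdot,nR)$ for a suitably large $n$, controlling the error terms with Theorem~\ref{Green1est}, Lemma~\ref{poissonKernel}, and Proposition~\ref{exittime}. In particular, the paper's argument leans on the full strength of Theorem~\ref{green}. Your argument bypasses the half-line Green function entirely: the upper bound is pure optional stopping for the square-integrable L\'evy martingale $X_t$, with the key observation that the left overshoot has bounded mean because $\psi(y)\le C\overline\nu(y)$ (a property of the L\'evy measure alone); the lower bound comes directly from the Gaussian comparison Lemma~\ref{potential_lower} applied inside the Ikeda--Watanabe representation. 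Both proofs finish the strip $0<x<1$ the same way, via BHP and the value at $x=1$.

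What each approach buys: the paper's method illustrates how the Green function estimates of Section~3 feed back into exit-probability estimates, which is thematically natural here. Your method is more elementary and more portable---it would work essentially verbatim for any symmetric L\'evy process with finite variance and a L\'evy density with bounded mean residual life (in particular exponential tails), without first establishing sharp Green function bounds. One minor remark: you invoke Proposition~\ref{exittime} only for the finiteness of $E^x\tau_{(0,R)}$, which is standard (e.g.\ by a geometric-decay argument) and does not really require the sharp bound proved there; so your proof is in fact independent of Theorem~\ref{green}.
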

\begin{proof} Assume that $R\ge 1$ and $0<x<R$. By Lemma \ref{continuity} the function
$G_{(0,\infty)}(\cdot,2R)$ is regular harmonic on $(0,R)$,
therefore by Remark \ref{Greenhalflineasymp}  we obtain
\begin{eqnarray}\label{MRp1}C x^{\alpha/2}\vee x &\ge&
G_{(0,\infty)}(x,2R)=
E^xG_{(0,\infty)}(X_{\tau_{(0,R)}},2R)\nonumber\\&\ge& c
E^x[X_{\tau_{(0,R)}}\wedge 2R; X_{\tau_{(0,R)}}>0]\nonumber\\&\ge&
c R P^x(X_{\tau_{(0,R)}}>0).\end{eqnarray}

Let $n\ge 3$, which we specify later. Again
$G_{(0,\infty)}(\cdot,nR)$ is regular harmonic on $(0,R)$.
Applying   Remark \ref{Greenhalflineasymp} we have
 \begin{eqnarray}\label{MRp2}
G_{(0,\infty)}(x,nR)&=&
E^xG_{(0,\infty)}(X_{\tau_{(0,R)}},nR)\nonumber\\&\le& C\left(n R
P^x(X_{\tau_{(0,R)}}>0) +
E^xG^1_{(0,\infty)}(X_{\tau_{(0,R)}},nR)\right).\nonumber\\
\end{eqnarray}
Moreover  Lemma \ref{poissonKernel} and Theorem \ref{Green1est}
imply
\begin{eqnarray} & &\!\!\!
E^xG^1_{(0,\infty)}(X_{\tau_{(0,R)}},nR)\nonumber\\& &=\, \int_{R}^\infty G^1_{(0,\infty)}(v,nR)P_{(0,R)}(x,v)dv\nonumber\\
& &=\, \int_{R}^{(n-1)R}
G^1_{(0,\infty)}(v,nR)P_{(0,R)}(x,v)dv\nonumber\\& & \;+\,
\int_{(n-1)R}^\infty G^1_{(0,\infty)}(v,nR)P_{(0,R)}(x,v)dv\nonumber\\
& &\le\, \sup_{R\le v\le (n-1)R}
G^1_{(0,\infty)}(v,nR)P^x(X_{\tau_{(0,R)}}\in (R, (n-1)R)
)\nonumber\\ & &\;+
 \sup_{ v\ge {(n-1)R}}P_{(0,R)}(x,v) \int_{(n-1)R}^\infty G^1_{(0,\infty)}(v,nR)dv\nonumber\\
& &\le\, c P^x(X_{\tau_{(0,R)}}>0)+
Ce^{-(n-2)R}E^x\tau_{(0,R)},\label{MRp22}
\end{eqnarray}
where  $P_{(0,R)}(x,v)$ is the Poisson kernel for $(0,R)$ and by
Lemma \ref{poissonKernel} it admits
$$P_{(0,R)}(x,v)\le C E^x\tau_{(0,R)} e^{-(n-2)R},\quad v\ge (n-1)R.$$
  Using the  \pref{MRp2} and  \pref{MRp22} we arrive at
$$\label{MRp3}c \, n \,R\,P^x(X_{\tau_{(0,R)}}>0)\ge G_{(0,\infty)}(x,nR)- Ce^{-(n-2)R}E^x\tau_{(0,R)}.
$$
By Lemma \ref{exittime}, $E^x\tau_{(0,R)}\approx R\,
(x^{\alpha/2}\vee x) $, so  Remark \ref{Greenhalflineasymp}
implies
 $$G_{(0,\infty)}(x,n R)- Ce^{-(n-2)R}E^x\tau_{(0,R)}\ge (c-C R e^{-(n-2)R})\, (x^{\alpha/2}\vee x).$$
 Now we pick $n$ independently of $R\ge 1$ and large enough so
 that\\ $c-C R e^{-(n-2)R}\ge c/2$.
This yields
\begin{equation}\label{MRp4}
P^x(\tau_{(0,R)}<\tau_{(0,\infty)})\ge c\frac{x^{\alpha/2}\vee
x}{R}.
\end{equation}
Next, for $R<1$ we use Lemma \ref{GrzywnyRyznar} to get
\begin{equation}\label{MRp6}P^x(\tau_{(0,R)}<\tau_{(0,\infty)}) \approx
\frac{x^{\alpha/2}}{R^{\alpha/2}}.\end{equation} Combining
\pref{MRp1}, \pref {MRp4}  and \pref{MRp6} ends the proof.
\end{proof}

Now we can prove the main result of this section.
\begin{thm}\label{optimaltail} For $x>0$ and $t\ge 1$,
$$ P^x(\tau_{(0,\infty)}>t)\approx  \frac{x^{\alpha/2}\vee x}{t^{1/2}}\wedge 1.$$
\end{thm}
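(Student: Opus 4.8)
The plan is to prove the upper and lower estimates separately. Note first that the truncation ``$\wedge 1$'' is only active when $x^{\alpha/2}\vee x\gtrsim\sqrt t$, and in that regime the required upper bound is simply $P^x(\tau_{(0,\infty)}>t)\le 1$, while the lower bound amounts to $P^x(\tau_{(0,\infty)}>t)\gtrsim 1$; so it suffices to produce matching bounds of order $(x^{\alpha/2}\vee x)/\sqrt t$ when $x<\sqrt t$ and an order-one lower bound when $x\ge\sqrt t$.

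For the \emph{upper bound} I would fix $t\ge1$, set $R=\sqrt t$ (so $R\ge1$), assume $0<x<R$, and split according to whether the process has left $(0,R)$ by time $t$:
$$P^x(\tau_{(0,\infty)}>t)\ \le\ P^x\big(\tau_{(0,R)}>t\big)+P^x\big(\tau_{(0,\infty)}>t,\ \tau_{(0,R)}\le t\big).$$
On the second event the process has exited $(0,R)$ while still lying in $(0,\infty)$, hence it left through the endpoint $R$ and $\tau_{(0,R)}<\tau_{(0,\infty)}$; that term is therefore at most $P^x(\tau_{(0,R)}<\tau_{(0,\infty)})\le C(x^{\alpha/2}\vee x)/R$ by Proposition~\ref{MRestimate}. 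The first term is controlled by Chebyshev's inequality and Proposition~\ref{exittime}: since $(R-x)^{\alpha/2}\vee(R-x)\le R$ for $R\ge1$, we get $P^x(\tau_{(0,R)}>t)\le E^x\tau_{(0,R)}/t\le C\,R(x^{\alpha/2}\vee x)/t$. As $R/t=1/\sqrt t$, both pieces are $\le C(x^{\alpha/2}\vee x)/\sqrt t$, and adding them (together with the trivial bound for $x\ge\sqrt t$) finishes the upper estimate.

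For the \emph{lower bound} I would first handle $x\ge1$ and the saturated regime by subordination. Writing $X_t=B_{T_\alpha(t)}$ with $B$ and $T_\alpha$ independent and using the lower bound of Lemma~\ref{density},
$$P^x(\tau_{(0,\infty)}>t)=\int_0^\infty p^{(0,\infty)}_t(x,y)\,dy\ \ge\ E\big[P^x(\tau^{gauss}_{(0,\infty)}>T_\alpha(t))\big],$$
where $E$ denotes the expectation over $T_\alpha(t)$ and the inner probability refers to the Brownian motion, for which $P^x(\tau^{gauss}_{(0,\infty)}>u)\approx(x/\sqrt u)\wedge 1$. Taking $\lambda=-1$ in \pref{subord2} gives $E^0 e^{T_\alpha(t)}=e^{t}$, hence $P^0(T_\alpha(t)\le 2t)\ge 1-e^{-t}\ge 1-e^{-1}$ for $t\ge1$, and restricting the expectation above to $\{T_\alpha(t)\le 2t\}$ yields $P^x(\tau_{(0,\infty)}>t)\gtrsim(x/\sqrt t)\wedge1$ for every $t\ge1$. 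This is exactly the asserted bound for $x\ge1$, and gives $\gtrsim1$ once $x\ge\sqrt t$.

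It remains to sharpen the lower bound for $x<1$ from $x/\sqrt t$ to $x^{\alpha/2}/\sqrt t$; this is the step that genuinely feels the stable-type boundary behaviour and is, to my mind, the main obstacle. The plan is to apply the strong Markov property at $\tau_{(0,1)}$: for a suitable absolute constant $s_0$ and $t$ large,
$$P^x(\tau_{(0,\infty)}>t)\ \ge\ E^x\big[\tau_{(0,1)}\le s_0,\ X_{\tau_{(0,1)}}\in[1,2];\ P^{X_{\tau_{(0,1)}}}(\tau_{(0,\infty)}>t/2)\big].$$
By Lemma~\ref{GrzywnyRyznar} applied to $D=(0,1)$ (assuming $x<\tfrac12$, the range $x\in[\tfrac12,1)$ being already covered by the previous paragraph) one has $P^x(X_{\tau_{(0,1)}}\in[1,2])\approx x^{\alpha/2}$, while $P^x(\tau_{(0,1)}>s_0)\le E^x\tau_{(0,1)}/s_0\le C x^{\alpha/2}/s_0$, so for $s_0$ large enough the event in the expectation has probability $\gtrsim x^{\alpha/2}$; and by the previous paragraph $P^z(\tau_{(0,\infty)}>t/2)\gtrsim 1/\sqrt t$ uniformly over $z\in[1,2]$. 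This gives $P^x(\tau_{(0,\infty)}>t)\gtrsim x^{\alpha/2}/\sqrt t$ once $t$ exceeds a fixed threshold $t_1$, and the compact range $1\le t\le t_1$ is then disposed of using the monotonicity of $t\mapsto P^x(\tau_{(0,\infty)}>t)$, the already-established estimate at $t=t_1$, and the elementary inequality $\big((x^{\alpha/2}\vee x)/\sqrt t\big)\wedge1\le x^{\alpha/2}\vee x$ valid for $t\ge1$. All in all the Markov decompositions themselves are routine; the real content is imported from Propositions~\ref{exittime} and \ref{MRestimate} and from Lemma~\ref{GrzywnyRyznar}, which is precisely what interpolates between the Brownian ($x$) and stable ($x^{\alpha/2}$) behaviour near the boundary.
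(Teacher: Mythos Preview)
Your approach is essentially the paper's: the upper bound via the split $P^x(\tau_{(0,R)}>t)+P^x(\tau_{(0,R)}<\tau_{(0,\infty)})$ with $R=\sqrt t$, Chebyshev plus Proposition~\ref{exittime}, and Proposition~\ref{MRestimate}; the lower bound for $x\ge1$ via subordination and the subordinator tail estimate from \pref{subord2}; and the boost from $x$ to $x^{\alpha/2}$ for $x<1$ via the strong Markov property at the exit from a short interval together with Lemma~\ref{GrzywnyRyznar}.

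There is, however, a slip in your displayed strong Markov inequality for $x<1$: as written, the bound
\[
P^x(\tau_{(0,\infty)}>t)\ \ge\ E^x\big[\tau_{(0,1)}\le s_0,\ X_{\tau_{(0,1)}}\in[1,2];\ P^{X_{\tau_{(0,1)}}}(\tau_{(0,\infty)}>t/2)\big]
\]
is false, since $P^{z}(\tau_{(0,\infty)}>t/2)\ge P^{z}(\tau_{(0,\infty)}>t)$ and you need the inequality the other way. The correct (and simpler) version replaces $t/2$ by $t$: because $\tau_{(0,1)}\ge0$, the event $\{\tau_{(0,\infty)}\circ\theta_{\tau_{(0,1)}}>t\}$ already forces $\tau_{(0,\infty)}>t$, so
\[
P^x(\tau_{(0,\infty)}>t)\ \ge\ E^x\big[X_{\tau_{(0,1)}}\in[1,2];\ P^{X_{\tau_{(0,1)}}}(\tau_{(0,\infty)}>t)\big]\ \gtrsim\ \frac{x^{\alpha/2}}{\sqrt t}
\]
directly for all $t\ge1$. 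This is exactly how the paper proceeds (with $(0,2)$ in place of $(0,1)$ and citing Proposition~\ref{MRestimate} for $P^x(X_{\tau_{(0,2)}}\ge2)\approx x^{\alpha/2}$), and it eliminates both the auxiliary constant $s_0$ and the separate treatment of $1\le t\le t_1$.
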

\begin{proof} Assume that $t\ge 1$. If $2x\ge t^{1/2}$ the upper bound is trivial. So we may assume that $2x< t^{1/2}$.
We have
$$P^x(\tau_{(0,\infty)}>t)\le P^x(\tau_{(0,R)}>t)+P^x(\tau_{(0,R)}<\tau_{(0,\infty)}).$$
Let $R>2x$. By Chebyschev's inequality and Proposition
\ref{exittime} we obtain $$P^x(\tau_{(0,R)}>t)\le
\frac{E^x\tau_{(0,R)}}{t}\approx \frac{(R\vee
R^{\alpha/2})(x^{\alpha/2}\vee x)}{t}.$$ By the Lemma
\ref{MRestimate}
$$P^x(\tau_{(0,R)}<\tau_{(0,\infty)})\le c \frac{x^{\alpha/2}\vee x}{R^{\alpha/2}\vee R }.$$
Setting $R=t^{1/2}$ we arrive at the upper bound.

Next, let us observe that by Lemma \ref{density},
\begin{eqnarray}\label{optailp1}P^x(\tau_{(0,\infty)}>t)&=&\int^{\infty}_0 p^{(0,\infty)}_t(x,y)dy\ge
\int^{\infty}_0 Eg^{(0,\infty)}_{T_\alpha(t)}(x,y)dy\nonumber\\&
=& E\, P^x(\tau^{gauss}_{(0,\infty)}>T_{\alpha}(t)).\end{eqnarray}
Let us observe that by Chebyschev's inequality and \pref{subord2}
for $\lambda=-1$ we have
$$P(T_{\alpha}(t)> 2t)=P(e^{T_{\alpha}(t)}>e^{2t})\le Ee^{T_\alpha(t)}e^{-2t}=e^{t-2t}\le e^{-1}.$$
That is $P(T_{\alpha}(t)\le 2t)\ge 1-e^{-1}$.
 Taking into account the fact that
$P^x(\tau^{gauss}_{(0,\infty)}>t)\approx \frac{x}{t^{1/2}}\wedge
1$ we obtain from \pref{optailp1},
 $$P^x(\tau_{(0,\infty)}>t)\ge c\,  E
 \left(\frac{x}{T_\alpha(t)^{1/2}}\wedge1\right)
 \ge c \left(\frac{x}{t^{1/2}}\wedge
1\right).$$

 Now, let $x< 1$ then
\begin{eqnarray*}P^x(\tau_{(0,\infty)}>t)&\ge& E^x \left[ X_{\tau_{(0,2)}}>0 ;
P^{X_{\tau_{(0,2)}}}(\tau_{(0,\infty)}>t)\right]\\ &\ge& c E^x
\left[ X_{\tau_{(0,2)}}\ge 2;
\frac{X_{\tau_{(0,2)}}}{t^{1/2}}\wedge1 \right]\\&\ge& c
\left(\frac{1}{t^{1/2}}\wedge 1\right) P^x(X_{\tau_{(0,2)}}\ge
2).\end{eqnarray*} Hence by Lemma \ref{MRestimate} we obtain
$$P^x(\tau_{(0,\infty)}>t)\ge c x^{\alpha/2}\frac{1}{t^{1/2}},$$
which completes the proof.
\end{proof}

\begin{cor}\label{density1A}
There exists a constant C such that, for $t>0$ and $x,y \ge1$,
\begin{equation}\label{density1A1}p_t^{(0,\infty)}(x,y) \le C (t^{-1/2}+t^{-1/\alpha})
\left(\frac{ x}{t^{1/2}}\wedge 1\right)\left(\frac{
y}{t^{1/2}}\wedge 1 \right).\end{equation} For $t\ge1$ and
$x,y>0$,
$$
c t^{-1/2} \left(\frac{ x}{t^{1/2}}\wedge 1\right)\left(\frac{
y}{t^{1/2}}\wedge 1\right)e^{-\frac{|x-y|^2}{c_1 t}} \le
p_t^{(0,\infty)}(x,y),$$ where $c$, $c_1$ are some constants.
 Hence, for $x,y,t\ge 1$, satisfying  $t\ge |x-y|^2$ we have the optimal bound
$$
p_t^{(0,\infty)}(x,y)\approx t^{-1/2} \left(\frac{
x}{t^{1/2}}\wedge 1\right)\left(\frac{ y}{t^{1/2}}\wedge 1
\right).
$$
\end{cor}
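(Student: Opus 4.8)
The plan is to derive the three assertions of Corollary \ref{density1A} by combining the upper bound from Lemma \ref{density} with the optimal tail estimate of Theorem \ref{optimaltail}, and by combining the lower bound from Lemma \ref{density} with the known Gaussian transition density for the killed Brownian motion on $(0,\infty)$. For the upper bound \pref{density1A1}: apply the second inequality of Lemma \ref{density} with $D=(0,\infty)$ to get
$$p_t^{(0,\infty)}(x,y)\le C(t^{-1/2}+t^{-1/\alpha})P^x(\tau_{(0,\infty)}>t/3)P^y(\tau_{(0,\infty)}>t/3),$$
and then insert Theorem \ref{optimaltail}, which for $t/3\ge 1$ gives $P^x(\tau_{(0,\infty)}>t/3)\le C(x^{\alpha/2}\vee x)/t^{1/2}\wedge 1$. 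Since we are assuming $x,y\ge 1$, we have $x^{\alpha/2}\vee x=x$, so each factor is $\le C(x/t^{1/2}\wedge1)$, resp. $\le C(y/t^{1/2}\wedge 1)$, giving \pref{density1A1} for $t\ge 3$; the range $t\le 3$ with $x,y\ge 1$ is handled separately using the crude pointwise bounds on $p_t$ from Lemmas \ref{transden_0} and \ref{transden2} together with the fact that on a bounded time interval the factors $x/t^{1/2}\wedge1$ and $y/t^{1/2}\wedge1$ are bounded below by a constant (so the claimed bound is just an upper bound on $p_t^{(0,\infty)}\le p_t$).

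For the lower bound: apply the first inequality of Lemma \ref{density}, $p_t^{(0,\infty)}(x,y)\ge Eg^{(0,\infty)}_{T_\alpha(t)}(x,y)$. The killed Brownian density on the half-line has the explicit reflection-principle form
$$g^{(0,\infty)}_u(x,y)=\frac{1}{\sqrt{4\pi u}}\left(e^{-\frac{(x-y)^2}{4u}}-e^{-\frac{(x+y)^2}{4u}}\right),$$
and a standard estimate shows $g^{(0,\infty)}_u(x,y)\ge c\,u^{-1/2}(x/\sqrt u\wedge 1)(y/\sqrt u\wedge 1)e^{-(x-y)^2/(cu)}$. Since from the proof of Theorem \ref{optimaltail} we have $P(T_\alpha(t)\le 2t)\ge 1-e^{-1}$, and on the event $\{T_\alpha(t)\le 2t\}$ the lower bound for $g^{(0,\infty)}_{T_\alpha(t)}$ is at least a constant times $t^{-1/2}(x/t^{1/2}\wedge 1)(y/t^{1/2}\wedge 1)e^{-(x-y)^2/(c_1 t)}$ (using $T_\alpha(t)\le 2t$ to control the Gaussian and power factors from below — here one also wants a lower bound $T_\alpha(t)\ge c t$ with good probability, which follows from the scaling $T_\alpha(t)\sim t^{2/\alpha}T_\alpha(1)$ or directly from $E T_\alpha(t)$ computations), taking the expectation restricted to that event yields the desired lower bound for $t\ge 1$.

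The third, optimal, statement is then immediate: for $x,y,t\ge 1$ with $t\ge|x-y|^2$, the exponential factor $e^{-|x-y|^2/(c_1 t)}$ in the lower bound is bounded below by $e^{-1/c_1}$, a positive constant, so the lower bound matches (up to constants) the upper bound \pref{density1A1} — noting that for $t\ge 1$ we have $t^{-1/2}+t^{-1/\alpha}\approx t^{-1/2}$ when $\alpha\ge 2$ fails, i.e.\ since $0<\alpha<2$ one has $t^{-1/\alpha}\le t^{-1/2}$ for $t\ge 1$, so $t^{-1/2}+t^{-1/\alpha}\approx t^{-1/2}$. The main obstacle I anticipate is making the lower-bound argument fully rigorous: one needs simultaneously a lower and an upper control on $T_\alpha(t)$ with probability bounded away from zero so that, on that event, all three factors $u^{-1/2}$, $(x/\sqrt u\wedge 1)(y/\sqrt u\wedge 1)$, and $e^{-(x-y)^2/(4u)}$ can be bounded below in terms of $t$; the upper bound $T_\alpha(t)\le 2t$ is given, and the lower bound (say $T_\alpha(t)\ge \delta t$ with probability $\ge 1/2$ for suitable $\delta$) needs a short separate argument, e.g.\ via Chebyshev applied to $1/T_\alpha(t)$ or via the Laplace transform \pref{subord2}. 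Everything else is routine.
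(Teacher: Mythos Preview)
Your approach is essentially the same as the paper's: the upper bound comes directly from Lemma \ref{density} combined with Theorem \ref{optimaltail} (the paper does not even spell out the small-$t$ case you handle explicitly), and the lower bound comes from the subordination inequality $p_t^{(0,\infty)}\ge E g^{(0,\infty)}_{T_\alpha(t)}$, the reflection formula for $g^{(0,\infty)}_u$, and a two-sided control on $T_\alpha(t)$ of the form $\beta t\le T_\alpha(t)\le 2t$ with probability bounded away from zero.

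One correction: your scaling claim $T_\alpha(t)\sim t^{2/\alpha}T_\alpha(1)$ is \emph{false} for the relativistic subordinator --- that self-similarity holds only for the strictly $\alpha/2$-stable subordinator, not for the exponentially tilted one used here. So this route to the lower tail does not work. Your alternative suggestion, via the Laplace transform \pref{subord2}, is exactly what the paper does: it applies Chebyshev with $e^{-\lambda T_\alpha(t)}$ for a suitably chosen $\lambda=1/\beta>0$ (specifically with $\beta$ solving $(1+1/\beta)^{\alpha/2}=3$) to obtain $P(T_\alpha(t)<\beta t)\le e^{-t}$, and combines this with $P(T_\alpha(t)>2t)\le e^{-t}$ to get $P(\beta t\le T_\alpha(t)\le 2t)\ge 1-2e^{-1}$ for $t\ge 1$. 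With that fix your argument is complete and matches the paper's.
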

\begin{proof}
The upper bound immediately follows from Lemma  \ref{density} and
Theorem \ref{optimaltail}.

Pick $0<\beta<1/2$ such that $(1+ 1/\beta)^{\alpha/2}-2=1$ and let
$$A_t=\{\omega:\beta t <T_{\alpha}(t)(\omega)<2t\}.$$ To obtain the lower
bound we use again   Lemma  \ref{density} to get
 $$p_t^{(0,\infty)}(x,y) \ge E \left[g^{(0,\infty)}_{T_{\alpha}(t)} (x,y); A_t \right]. $$

Next by a classical result  \begin{eqnarray*}
g^{(0,\infty)}_t(x,y)&=& g_t(x-y)-g_t(x+y)
= g_t(x-y)\left(1-e^{-\frac{xy}{t}}\right)\\
&\ge& g_t(x-y)\left(1\wedge \frac{xy}{t}\right)\ge
g_t(x-y)\left(\frac{ x}{t^{1/2}}\wedge 1\right)\left(\frac{
y}{t^{1/2}}\wedge 1 \right).
\end{eqnarray*}
Hence
$$p_t^{(0,\infty)}(x,y)\ge c t^{-1/2}e^{-\frac{|x-y|^2}{4\beta t}}P(A_t)
\left(\frac{ x}{t^{1/2}}\wedge 1\right)\left(\frac{
y}{t^{1/2}}\wedge 1 \right).$$ Next we estimate $P(A^c_t)$.
 By Chebyschev's inequality and by \pref{subord2} for
$\lambda=1/\beta$,
\begin{eqnarray*}  P(T_{\alpha}(t)<\beta t)&=& P(e^{-(1/\beta)T_{\alpha}(t)}>e^{- t})\le e^{ t}Ee^{-(1/\beta)T_{\alpha}(t)}
   \\&=& e^{-((1+ 1/\beta)^{\alpha/2}-2) t}= e^{ -t} .\end{eqnarray*}
Similarly by \pref{subord2} for $\lambda=-1$,
  $$  P(T_{\alpha}(t)> 2 t)= P(e^{T_{\alpha}(t)}>e^{2t})\le e^{-2t}Ee^{T_{\alpha}(t)}
   = e^{ -t}.$$
   Hence
  $$P(A^c_t)\le 2e^{- t},$$
  which implies $\inf_{t>1}P(A_t)\ge 1-2e^{- 1}$ and this ends the proof.

  \end{proof}

  One of the drawbacks of the  inequality in the above Corollary
   is that the right hand side does not depend on the distance $|x-y|$.
   The following result will be very useful in the next section
    and it does take into account the distance $|x-y|$.

  \begin{thm}\label{density1}
Let $x,y\ge 1$ and $|x-y|\ge 1$. Then, for $t\le |x-y|^2$,
$$p^{(0,\infty)}_t(x,y)\le
    C\left(\frac{xy}{|x-y|^{2}}\wedge1\right)\left(g_{t}\left(c(x-y)\right),
    +t\,\nu\left(c(x-y)\right)\right)$$
where $c=8\sqrt{2}$ and $C$ is some constant. Moreover
$$\int^{\infty}_1t^{-d/2+1/2}p^{(0,\infty)}_t(x,y)dt\le c(d,\alpha)\frac{xy}{|x-y|^d}.$$
\end{thm}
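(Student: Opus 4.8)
The plan is to establish the pointwise bound first, for $1\le t\le|x-y|^{2}$ (the range $t<1$ is immediate from Lemma \ref{transden2}), and then to obtain the integral bound by splitting $\int_{1}^{\infty}$ at $t=|x-y|^{2}$ and using the pointwise bound below $|x-y|^{2}$ and Corollary \ref{density1A} above it. Throughout put $r=|x-y|$ and assume without loss of generality that $x\le y$.

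For the pointwise bound the spatial factor is essentially free: $p_{t}^{(0,\infty)}(x,y)\le p_{t}(x-y)\le c\bigl(g_{t}((x-y)/\sqrt2)+t\,\nu((x-y)/\sqrt2)\bigr)$ by Lemma \ref{transden2}, and since $g_{t}$ and $\nu$ are radially non-increasing and $8\sqrt2\ge\sqrt2$, this is dominated by the right-hand bracket $g_{t}(c(x-y))+t\,\nu(c(x-y))$. Hence when $xy\ge r^{2}$, so that $\tfrac{xy}{r^{2}}\wedge1\asymp1$, we are done. When $xy<r^{2}$ we must gain the extra factor $\tfrac{xy}{r^{2}}\wedge1$. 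For this I would localize the Chapman--Kolmogorov identity
$$p_{t}^{(0,\infty)}(x,y)=\int_{0}^{\infty}\!\!\int_{0}^{\infty}p_{t/3}^{(0,\infty)}(x,u)\,p_{t/3}^{(0,\infty)}(u,v)\,p_{t/3}^{(0,\infty)}(v,y)\,du\,dv$$
by splitting the $u$- and $v$-integrations according to whether $|u-x|$, respectively $|v-y|$, is below $r/4$. On the main piece ($|u-x|<r/4$, $|v-y|<r/4$) the middle leg runs over a distance $>r/2$, so $p_{t/3}^{(0,\infty)}(u,v)\le p_{t/3}(u-v)$ is again controlled by $g_{t}(c(x-y))+t\,\nu(c(x-y))$ through Lemma \ref{transden2} (the constant $8\sqrt2$ leaving ample room for the rescalings $p_{t/3}\leftrightarrow p_t$ and $u-v\leftrightarrow x-y$), while the two outer integrals are at most $P^{x}(\tau_{(0,\infty)}>t/3)$ and $P^{y}(\tau_{(0,\infty)}>t/3)$; the complementary pieces, on which one of the outer legs crosses a distance $\gtrsim r$, are handled crudely from $p_{t/3}(w)\le c|w|^{-d}$ (Lemma \ref{transden2}) together with the tail and mass bounds of Theorem \ref{optimaltail}.

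The one genuinely delicate step — and the reason both the hypothesis $t\le r^{2}$ and the generous constant $c=8\sqrt2$ appear — is that Theorem \ref{optimaltail} only supplies $P^{x}(\tau_{(0,\infty)}>t/3)\approx\tfrac{x^{\alpha/2}\vee x}{t^{1/2}}\wedge1$, a quantity on the diffusive scale $t^{1/2}$, whereas the target factor $\tfrac{xy}{r^{2}}\wedge1\asymp\bigl(\tfrac{x}{r}\wedge1\bigr)\bigl(\tfrac{y}{r}\wedge1\bigr)$ lives on the larger scale $r$. To cross this gap one exploits that a trajectory additionally constrained to be displaced by $\gtrsim r$ within time $\le r^{2}$ effectively spends its near-boundary time on the scale $r^{2}$, not $t$: concretely, one routes the estimate through Corollary \ref{density1A} at the time $t_{0}=r^{2}$, where $p_{t_{0}}^{(0,\infty)}(x,y)\le Ct_{0}^{-1/2}\bigl(\tfrac{x}{\sqrt{t_{0}}}\wedge1\bigr)\bigl(\tfrac{y}{\sqrt{t_{0}}}\wedge1\bigr)\asymp r^{-1}\bigl(\tfrac{xy}{r^{2}}\wedge1\bigr)$ is already sharp, and then transfers this back to times $t\le t_{0}$ along the semigroup by a comparison argument; the mismatch between $t$ and $r^{2}$ lost in this trade is precisely what is absorbed into the constant $c$. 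I expect this transfer to carry the bulk of the work.

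Finally, for the integral bound (presented for $d\ge3$, the range relevant to Section 5) split $\int_{1}^{\infty}t^{-d/2+1/2}p_{t}^{(0,\infty)}(x,y)\,dt$ at $t_{0}=r^{2}$. On $(1,r^{2})$ insert the pointwise bound: after the substitution $t=r^{2}u$ the $g_{t}$-term contributes $\lesssim\tfrac{xy}{r^{2}}\,r^{3-2d}\lesssim xy\,r^{-d}$ via a convergent elementary integral, and the $t\,\nu$-term contributes $\lesssim xy\,r^{3-(d+\alpha+1)/2}e^{-cr}\lesssim xy\,r^{-d}$ since the exponential dominates every power of $r\ge1$. On $(r^{2},\infty)$ apply Corollary \ref{density1A}, $p_{t}^{(0,\infty)}(x,y)\le C(t^{-1/2}+t^{-1/\alpha})\bigl(\tfrac{x}{\sqrt t}\wedge1\bigr)\bigl(\tfrac{y}{\sqrt t}\wedge1\bigr)$; a short case analysis according to which of $x,y$ exceeds $\sqrt t$ (always using $r=y-x<y$) gives $\int_{r^{2}}^{\infty}t^{-d/2+1/2}p_{t}^{(0,\infty)}(x,y)\,dt\lesssim xy\,r^{-d}$. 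Adding the two ranges yields $\int_{1}^{\infty}t^{-d/2+1/2}p_{t}^{(0,\infty)}(x,y)\,dt\le c(d,\alpha)\,xy/|x-y|^{d}$.
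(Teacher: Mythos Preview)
Your plan for the integral bound (split at $|x-y|^2$, use the pointwise bound below and Corollary~\ref{density1A} above) matches the paper and is fine once the pointwise bound is available. The gap is in the pointwise bound itself. Your three-fold Chapman--Kolmogorov decomposition produces, on the main piece, a bound of the shape $\bigl(\tfrac{x}{\sqrt t}\wedge1\bigr)\bigl(\tfrac{y}{\sqrt t}\wedge1\bigr)\,p_{t/3}(r/2)$ with $r=|x-y|$; you have correctly seen that the survival factors sit on the temporal scale $\sqrt t$, not the spatial scale $r$, and you call the passage from one to the other ``the one genuinely delicate step''. But your proposed cure --- ``route through Corollary~\ref{density1A} at $t_0=r^2$ and transfer back to $t\le t_0$ along the semigroup by a comparison argument'' --- is not an argument: there is no monotonicity or comparison principle that pulls a bound at time $r^2$ back to an earlier time. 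On the main piece the slack between the constants $2\sqrt2$ and $8\sqrt2$ can in fact be made to absorb the factor $r/\sqrt t$ (so this part is salvageable), but on your complementary pieces it cannot. There, your ``crude'' bound $p_{t/3}(w)\le c|w|^{-1}$ gives at best $O(r^{-1})$ with no exponential decay, whereas for $x\approx1$, $y\approx r$, $t\approx r^2$ the target $(xy/r^2)\bigl(g_t(r/(8\sqrt2))+t\nu(r/(8\sqrt2))\bigr)$ is of order $r^{-2}$. Even upgrading to the full Lemma~\ref{transden2} estimate $p_{t/3}(r/4)\lesssim g_{t/3}(r/(4\sqrt2))+t\nu(r/(4\sqrt2))$ only buys a factor $e^{-c\,r^2/t}$, which is $O(1)$ for $t$ near $r^2$ and cannot supply the missing $1/r$.

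The paper does not use Chapman--Kolmogorov. It applies the strong Markov property at the first exit time $\tau_S$ from the \emph{bounded} interval $S=(0,(x+y)/2)$, and the spatial factor drops out immediately from Proposition~\ref{MRestimate}:
\[
P^x\bigl(\tau_S<\tau_{(0,\infty)}\bigr)\ \le\ c\,\frac{x}{x+y}\ \le\ c\,\frac{xy}{|x-y|^2}.
\]
The landing position $X_{\tau_S}$ is then split into ``away from $y$'' (controlled by Lemma~\ref{transden2}, with the probability above as prefactor) and ``near $y$'' (a big jump, controlled through the Ikeda--Watanabe formula~\eqref{Ikeda-WatanabeFormula2} and a pointwise bound on $\nu$). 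The first-exit mechanism, and specifically Proposition~\ref{MRestimate}, is what your proposal is missing: survival probabilities in $(0,\infty)$ alone cannot manufacture the factor $xy/|x-y|^2$; the escape probability from a finite subinterval can, and does.
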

\begin{proof} Our arguments are based on  the idea of  proof of Theorem 4.2 in \cite{KS}.
Throughout the whole proof we assume that  $x,y\ge 1$ and  $x\le
y-1$.

We first consider the case $t\le |x-y|^2/16$.  The interval
$(0,(x+y)/2)$ we denote
 by $S$ and $(y-s,y+s)$ by $D(s)$. Let $0<s<1/8$,
then $D(s)\in (0,\infty)\setminus S$. By the Strong Markov
Property we obtain
\begin{eqnarray}
& &\!\!\!\!\!\! \int_{D(s)}p^{(0,\infty)}_t(x,y)dz\nonumber\\&
&\!\!\!=\,P^x(X_t\in D(s),\tau_{{(0,\infty)}}>t)\nonumber\\&
&\!\!\!\le\,
P^x(\tau_S<t, X_{\tau_S}>0, X_t\in D(s))\nonumber\\
& &\!\!\!=\,E^x\left[ \tau_S<t,X_{\tau_S}\in(0,\infty)\setminus S,
P^{X(\tau_S)}(X_{t-\tau_S}\in D(s)) \right]. \label{estdensH3}
\end{eqnarray}

Let $A=(y-|x-y|/4,y+|x-y|/4)$ and $B=(0,\infty)\setminus(S\cup
A)$. Observe that $\textrm{dist}(A,S)=|x-y|/4$ and
$\textrm{dist}(B,D(s))\ge |x-y|/8$. Because $p_t(x)$ is radially
decreasing in $|x|$  we have for $X_{\tau_S}\in B$,
\begin{eqnarray*}
P^{X(\tau_S)}(X_u\in
D(s))&=&\int_{D(s)}p_u(X_{\tau_S}-z)dz\\
&\le&|D(s)|p_u\left(\frac{x-y}{8}\right)\\&\le&
c|D(s)|\left(g_{u}\left(\frac{x-y}{8\sqrt{2}}\right)+u\nu\left(\frac{x-y}{8\sqrt{2}}\right)\right),
\end{eqnarray*}
where in the last step we applied Lemma \ref{transden2}.
 Next  observe that $g_t(x)$ is an increasing function in $t$  on the interval $(0,x^2/2)$.
 Hence, for $t\le |x-y|^2/264$, we obtain, for $X_{\tau_S}\in B $,
$$P^{X(\tau_S)}(X_{t-\tau_S}\in
D(s))\le c|D(s)|\left(g_{t}\left(\frac{x-y}{8\sqrt{2}}\right)+
t\nu\left(\frac{x-y}{8\sqrt{2}}\right)\right) .$$ Define
$F(t,z)=g_{t}(z/(8\sqrt{2}))+ t\nu(z/(8\sqrt{2}))$. Then
Proposition \ref{MRestimate} and the above estimate yield
\begin{eqnarray}\label{estdensH4}
& &\!\!\!E^x\left[ \tau_S<t,X_{\tau_S}\in
B,P^{X({\tau_S})}(X_{t-\tau_S}\in D(s))
 \right]\nonumber\\& & \le\, c|D(s)|F(t,x-y)P^x(
\tau_S<t,X_{\tau_S}\in B)\nonumber\\
& &\le\, c|D(s)|F(t,x-y)P^x( \tau_S<\tau_{(0,\infty)})\nonumber\\
& &\le\, c|D(s)|F(t,x-y)\frac{x}{x+y}\nonumber\\
& &\le\, c|D(s)|F(t,x-y)\frac{xy}{|x-y|^2}.
\end{eqnarray}

For the set $A$ we have by \pref{Ikeda-WatanabeFormula2},
\begin{eqnarray*}
& &\!\!\!E^x\left[\tau_S<t,X_{\tau_S}\in
A,P^{X(\tau_S)}(X_{t-\tau_S}\in D(s))
 \right]  \\
& & = \int_{S}\int^t_0 p^S_r(x,z)\int_A\nu(z-w)P^w(X_{t-r}\in
 D(s))dwdrdz.
\end{eqnarray*}
  Moreover
\begin{eqnarray}\label{simple}\int_{A}P^w(X_{t}\in D(s))dw &=& \int_{A}\int_{D(s)}p(t,w,z)dzdw\nonumber\\&=&
 \int_{D(s)} \left(\int_{A}p(t,w,z)dw\right)dz\nonumber\\
 &=&
 \int_{D(s)}P^w(X_t\in A)dw\le |D(s)|.
 \end{eqnarray}
Using (\ref{simple}) and observing that  $\nu(z-w)\le
\nu((x-y)/4), \ w\in A, z\in S  $
 we obtain
\begin{eqnarray}\label{estdensH5}
& & E^x\left[ \tau_S<t,X_{\tau_S}\in
A,P^{X(\tau_S)}(X_{t-\tau_S}\in D(s))
 \right]\nonumber\\& &\le\,c|D(s)|\nu((x-y)/4)\int_S \int^{t}_0
 p_S(r,x,z)drdz\nonumber\\
 & &=\, c|D(s)|\nu((x-y)/4) \int^{t}_0
 P^x(\tau_S>r)dr\nonumber\\
 & &\le\,c|D(s)|t\nu((x-y)/4)\nonumber\\
 & &\le\,c|D(s)|t\frac{xy}{|x-y|^2}\nu((x-y)/(8\sqrt{2})),
\end{eqnarray}
where the last step follows from (\ref{levymeasure}) and
(\ref{asympt_infty}). Combining \pref{estdensH3}, \pref{estdensH4}
and \pref{estdensH5} after dividing by $|D(s)|$ and passing
$s\searrow0$ we obtain for $x,y\ge1$, $|x-y|\ge1$ and $|x-y|^2\ge
256t$,
\begin{equation}\label{estdensH6}p^{(0,\infty)}_t(x,y)\le
c\frac{xy}{|x-y|^{2}}\left(g_{t}\left((x-y)/(8\sqrt{2})\right)+
t\nu\left((x-y)/(8\sqrt{2})\right)\right).\end{equation}

 Next we consider $|x-y|^2\le 256t$.  By \pref{density1A1} we get for
$t\ge 1/256$ and $x,y\ge 1$,
\begin{equation}\label{estdensH7}
p^{(0,\infty)}_t(x,y)\le c t^{-1/2}\frac{xy}{t}.
\end{equation}
Since for $t>|x-y|^2/256\ge 1/256$,
$$ct^{-1/2}\le g_{t}((x-y)/(8\sqrt{2}))$$
we obtain
$$\label{estdensH14}
p^{(0,\infty)}_t(x,y)\le
c\frac{xy}{t}g_{t}((x-y)/(8\sqrt{2})),\quad t>|x-y|^2/256.
$$
 The above inequality  combined with \pref{estdensH6}, \pref{estdensH7} and Lemma \ref{transden2}  implies the
first claim of the theorem.

To prove the second conclusion of the theorem  we apply
\pref{estdensH6} for $256t<|x-y|^2$ and  \pref{estdensH7}
 for $256t\ge
|x-y|^2$ to get
\begin{eqnarray*}
& &\!\!\! \int^{\infty}_1t^{-(d-1)/2}p^{(0,\infty)}_t(x,y)dt\\
& & \le\, c\frac{xy}{|x-y|^2} \int^{
|x-y|^2/256}_{1/256}t^{-(d-1)/2}\left(g_{t}\left(\frac{x-y}{8\sqrt{2}}\right)+t\nu\left(\frac{x-y}{8\sqrt{2}}\right)\right)dt
\\&&\;+\,c\,xy\int^\infty_{|x-y|^2/256}t^{-d/2-1}dt\\
& &\le\, c\, xy\left(\frac{|x-y|^{2-d}}{|x-y|^2}+\frac{e^{-|x-y|/16}}{|x-y|^{\alpha/2+3}}(1\vee |x-y|^{5-d})+\frac{1}{|x-y|^d}\right)\\
& &\le\, c\frac{xy}{|x-y|^d}.
\end{eqnarray*}
Note that we used \pref{asympt_infty}  to estimate the density of
the L\'evy measure.
\end{proof}

\section{Green function of $\H\subset \Rd$, $d\ge 2$.}
\setcounter{equation}{0}

In this section we extend our one-dimensional estimates for a
half-line to higher dimensions. To achieve this we start with some
upper estimates of the transition densities of the killed process.

Note that by subordination we have
$$p_t(x)=Eg_{T_{\alpha}(t)}(x),\quad x\in \Rd.$$

Let  $A_t=\{\omega:\beta t <T_{\alpha}(t)(\omega)<2t\}$ be the set
defined in the proof of Corollary \ref{density1A}. Let us define
$$q_t(x)=E\left(g_{T_{\alpha}(t)}(x);{A_t^c}\right),\quad x\in \Rd.$$

In the sequel we will need a simple upper bound of $q(t,x)$. Note
that $g_t(x)\le \frac c{|x|^d},\ t>0,$ and this used for $q_t(x)$
yields
\begin{equation}\label{tail}q_t(x)\le \frac c{|x|^d}P(A_t^c) \le \frac C{|x|^d}e^{-2 t}.
\end{equation}

The next lemma will have a very important role in obtaining the
upper bound for the Green function. We introduce the following
notation. For $x\in \Rd$ we denote ${\bf x}=(x_1,\dots,x_{d-1})$
and by  ${\bf g}_t( {\bf x})$ we denote the Brownian semigroup in
$\mathbb{R}^{d-1}$.

\begin{lem}\label{density10}There is a constant $C=C(d,\alpha)$ such that
\begin{equation}\label{density01}p^\H_t(x,y)\le C{\bf
g}_{ 2t}( {\bf x}-{\bf y})p^{(0,\infty)}_t( x_d,y_d) +
q_t(x-y),\quad  x, y \in \H.\end{equation}
\end{lem}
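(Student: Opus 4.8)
The plan is to exploit the product structure of the Brownian semigroup, $g_t(x) = {\bf g}_t({\bf x})\, g_t^{(1)}(x_d)$ where $g_t^{(1)}$ is the one-dimensional Gaussian, together with the fact that $\H = \mathbb{R}^{d-1}\times(0,\infty)$. First I would write the killed transition density via subordination as in the preceding remarks: $p^\H_t(x,y) = E\, g^\H_{T_\alpha(t)}(x,y)$, and then use that the killed Brownian semigroup on $\H$ itself factors, $g^\H_u(x,y) = {\bf g}_u({\bf x}-{\bf y})\, g^{(0,\infty)}_u(x_d,y_d)$, since the Brownian motion on $\H$ is just $(d-1)$-dimensional unrestricted Brownian motion times a one-dimensional Brownian motion killed at $0$. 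Splitting the expectation over $A_t$ and $A_t^c$ gives
$$p^\H_t(x,y) = E\left[{\bf g}_{T_\alpha(t)}({\bf x}-{\bf y})\, g^{(0,\infty)}_{T_\alpha(t)}(x_d,y_d);A_t\right] + E\left[g^\H_{T_\alpha(t)}(x,y);A_t^c\right].$$
The second term is bounded by $E[g_{T_\alpha(t)}(x-y);A_t^c] = q_t(x-y)$, since $g^\H_u \le g_u$; this handles the $q_t(x-y)$ summand.

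For the first (main) term I would control the $(d-1)$-dimensional Gaussian factor on the event $A_t$. On $A_t$ we have $\beta t < T_\alpha(t) < 2t$, and since ${\bf g}_u({\bf z})$ as a function of $u$ is unimodal — increasing then decreasing — with the Gaussian shape, one gets a pointwise comparison ${\bf g}_{T_\alpha(t)}({\bf x}-{\bf y}) \le C\,{\bf g}_{2t}({\bf x}-{\bf y})$ on $A_t$, possibly after adjusting the constant $c$ in the bandwidth; the cleanest route is the elementary inequality ${\bf g}_u({\bf z}) \le C\,{\bf g}_{2t}({\bf z})$ valid for all $u\in[\beta t,2t]$ with $C=C(d,\beta)$ (on $[\beta t, 2t]$ the prefactor $u^{-(d-1)/2}$ costs a constant and the exponent $|{\bf z}|^2/(4u)$ is minimized over this range near $u=2t$, so $e^{-|{\bf z}|^2/(4u)} \le e^{-|{\bf z}|^2/(8t)}$ times... — here one must be slightly careful that the exponential comparison goes the right way, which it does since increasing $u$ only increases $e^{-|{\bf z}|^2/(4u)}$). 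Pulling this deterministic bound out of the expectation yields
$$E\left[{\bf g}_{T_\alpha(t)}({\bf x}-{\bf y})\, g^{(0,\infty)}_{T_\alpha(t)}(x_d,y_d);A_t\right] \le C\,{\bf g}_{2t}({\bf x}-{\bf y})\, E\left[g^{(0,\infty)}_{T_\alpha(t)}(x_d,y_d);A_t\right].$$

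Finally, I would recognize $E[g^{(0,\infty)}_{T_\alpha(t)}(x_d,y_d);A_t] \le E\,g^{(0,\infty)}_{T_\alpha(t)}(x_d,y_d) \le p^{(0,\infty)}_t(x_d,y_d)$ by the lower bound of Lemma \ref{density} applied in dimension one. Combining the two pieces gives exactly \pref{density01}. The main obstacle is the deterministic Gaussian comparison step: one has to choose the constants $\beta$ (already fixed in Corollary \ref{density1A}) and the final $C$ so that the bandwidth mismatch between $T_\alpha(t)\in(\beta t,2t)$ and the target $2t$ is absorbed; this is why the statement carries a free constant $C$ and does not insist on a sharp bandwidth. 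Everything else is bookkeeping with the product structure of the Brownian and killed-Brownian semigroups and the already-established lower bound in Lemma \ref{density}.
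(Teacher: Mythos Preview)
Your argument has a genuine gap at the very first step. You assert that
\[
p^{\H}_t(x,y) \;=\; E\,g^{\H}_{T_\alpha(t)}(x,y),
\]
but this equality is false: Lemma~\ref{density} gives only the inequality
$E\,g^{\H}_{T_\alpha(t)}(x,y)\le p^{\H}_t(x,y)$. The event $\{\tau_\H>t\}$ for the
relativistic process requires $B^{(d)}_{T_\alpha(s)}>0$ for $0\le s<t$, i.e.\ at the
\emph{subordinated} times only. Because $T_\alpha$ has jumps, the Brownian path may
dip below zero between subordinated times and return, an event invisible to $X$; hence
``subordinate then kill'' dominates ``kill then subordinate''. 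Since your starting
identity is actually a lower bound for $p^{\H}_t$, the subsequent chain of
inequalities bounds the wrong quantity and does not control $p^{\H}_t(x,y)$ from above.

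The paper's proof avoids this by never passing through $g^{\H}_u$. Instead it conditions
on the whole path $T_\alpha(\cdot)$ and uses that, conditionally, the first $d-1$
Brownian coordinates and the $d$-th coordinate are independent, while the constraint
$\tau_\H>t$ involves only the $d$-th coordinate of the \emph{subordinated} process. On
$A_t$ one bounds the $(d-1)$-dimensional factor by
$\sup_{\beta t\le u\le 2t}{\bf g}_u({\bf x}-{\bf y})\le C\,{\bf g}_{2t}({\bf x}-{\bf y})$
(your Gaussian comparison, which is fine), and the remaining one-dimensional factor,
after taking expectation in $T_\alpha$, is exactly
$P^{x_d}(X^{(d)}_t\in\,\cdot\,,\ \tau_{(0,\infty)}>t)$, i.e.\ $p^{(0,\infty)}_t(x_d,y_d)$
itself --- not $E\,g^{(0,\infty)}_{T_\alpha(t)}(x_d,y_d)$. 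Your product factorization
$g^{\H}_u={\bf g}_u\cdot g^{(0,\infty)}_u$ is correct for Brownian motion, but it is the
wrong object here; the correct factorization lives at the level of the subordinated
process, and that is what produces $p^{(0,\infty)}_t$ in the bound.
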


\begin{proof}
 For $y \in \H$ and $\delta>0$ denote $V=V_y(\delta)= [y,y+\delta]=\times_{i=1}^d[y_i,y_i+\delta]=
 {\bf V}\times V_d\subset \H$. Then by independence of the
 subordinator $T_{\alpha}(t)$ and Brownian motion $B_t$  one gets
\begin{eqnarray*}
& &\!\!\! P^x(X_t\in V, \tau_\H>t )\\& &=\, P^x(X_t\in V, \tau_\H>t, A_t )+ P^x(X_t\in V, \tau_\H>t, A^c_t )\\
& &=\, E \Big[A_t;P^{\bf x}\left( {\bf B}_{T_{\alpha}(t)}\in {\bf
V}|T_{\alpha}(\cdot) \right)\times\\
& & \ \ \times\: P^{ x_d}\left(B^{(d)}d_{T_\alpha(t)}\in V_d, B^{(d)}_{T_\alpha(s)}>0; 0<s<t|T\alpha(\cdot) \right)\Big]\\
& &\ \ +\:  P^x(X_t\in V, \tau_\H>t, A^c_t )\\
& &\le\, \sup_{\beta t\le u\le 2 t} P^{\bf x}( {\bf B}_{u}\in {\bf
V}) P^{ x_d}(B^{(d)}_{T_\alpha(t)}\in V_d,
B^{(d)}_{T_\alpha(s)}>0: 0<s<t )\\& &\ \ +\: \int_{V} q_t(x-z)dz\\
& &\le\, CP^{\bf x}( {\bf B}_{2 t}\in {\bf V}) P^{
x_d}(X^{(d)}_{t}\in V_d, \tau_\H>t)+ \int_{V} q(t,x-z)dz.
\end{eqnarray*}
After dividing both sides by $|V|$ and  passing $ \delta \searrow
0$ we obtain the conclusion.

\end{proof}
Note that for any $x,y\in \H$ we can estimate ${\bf g}_{ 2t}( {\bf
x}-{\bf y})\le ct^{-(d-1)/2}$ so from \pref{density01} we deduce
that   \begin{equation}\label{density2} p^\H_t(x,y)\le
ct^{-(d-1)/2}p^{(0,\infty)}_t(x_d,y_d) + q_t(x-y),
\end{equation}
which will be well estimated with the help of Theorem
\ref{density1}.

 Lemma \ref{density10}, the estimate (\ref{tail}) and Theorem
\ref{density1} show that for the points $x,y\in\H$ away from the
boundary such that $|x-y|>2$  the Green functions for the
relativistic process and the Brownian motion are comparable. In
view of the one-dimensional case this result, proved below, is not
surprising.

\begin{thm}\label{GaussEstimate}
 For $|x-y|>2$ and  $x_d, y_d\ge 1$ we have
$$G_\H(x,y) \approx    G_\H^{gauss}(x,y).$$

\end{thm}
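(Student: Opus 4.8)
The statement has two halves, of which only the upper bound is substantial. The lower bound $G_\H(x,y)\ge\tfrac2\alpha\,G^{gauss}_\H(x,y)$ follows from Lemma \ref{potential_lower} and holds with no restriction on $x,y$. For the upper bound I would split $G_\H(x,y)=\int_0^6 p^\H_t(x,y)\,dt+\int_6^\infty p^\H_t(x,y)\,dt$. Since $p^\H_t\le e^{6}e^{-t}p^\H_t$ for $t\le 6$, the first integral is at most $e^{6}G^1_\H(x,y)$; as $x_d,y_d\ge1$ and $|x-y|>2$, Theorem \ref{Green1est} and \pref{asympt_infty} give $G^1_\H(x,y)\approx K_{(d-\alpha)/2}(|x-y|)\,|x-y|^{-(d-\alpha)/2}\le C|x-y|^{-d}$, and since $G^{gauss}_\H(x,y)\ge c|x-y|^{-d}$ by \pref{gaussGreen}--\pref{gaussGreen2}, this part is $\le C\,G^{gauss}_\H(x,y)$. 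In $\int_6^\infty p^\H_t\,dt$ I would use Lemma \ref{density10}: the $q_t$-term integrates, by \pref{tail}, to $\le C|x-y|^{-d}\le C\,G^{gauss}_\H(x,y)$, so everything reduces to estimating $J:=\int_6^\infty{\bf g}_{2t}({\bf x}-{\bf y})\,p^{(0,\infty)}_t(x_d,y_d)\,dt$ against $G^{gauss}_\H(x,y)=\int_0^\infty{\bf g}_t({\bf x}-{\bf y})\,g^{(0,\infty)}_t(x_d,y_d)\,dt$.

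If $|x_d-y_d|<1$ I would first shift $y$ to $y'=(y_1,\dots,y_{d-1},y_d+2)$: then $|x_d-y'_d|\ge1$, $y'_d\approx y_d$ and $|x-y'|\approx|x-y|>2$, so $G^{gauss}_\H(x,y')\approx G^{gauss}_\H(x,y)$ straight from the formulas, while $G_\H(x,y')\approx G_\H(x,y)$ by the Harnack inequality for the nonnegative harmonic function $G_\H(x,\cdot)$ on the ball of radius $\tfrac32$ centred at $(y_1,\dots,y_{d-1},y_d+1)$ --- this ball lies in $\H$ (its centre has last coordinate $y_d+1\ge2$), contains $y$ and $y'$, and omits $x$ since its centre is at distance $\ge|{\bf x}-{\bf y}|>\sqrt3$ from $x$, because $|{\bf x}-{\bf y}|^2=|x-y|^2-|x_d-y_d|^2>3$. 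Hence one may assume $|x_d-y_d|\ge1$.

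For $x_d,y_d\ge1$ and $|x_d-y_d|\ge1$, Theorem \ref{density1} (for $1\le t\le|x_d-y_d|^2$), Corollary \ref{density1A} (for $t\ge|x_d-y_d|^2$), and the elementary comparisons of these bounds with $g^{(0,\infty)}_{Ct}(x_d,y_d)$ (which rest on $g^{(0,\infty)}_s(a,b)=g_s(a-b)(1-e^{-ab/s})$ and, for the small-$t$ part, on $t\le|x_d-y_d|^2$) yield, for every $t\ge1$,
$$
p^{(0,\infty)}_t(x_d,y_d)\ \le\ C\,g^{(0,\infty)}_{Ct}(x_d,y_d)\ +\ C\,t\,\nu\bigl(c(x_d-y_d)\bigr),
$$
the last summand being present only when $t\le|x_d-y_d|^2$. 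Inserting this into $J$ and using ${\bf g}_{2t}({\bf x}-{\bf y})\le C\,{\bf g}_{Ct}({\bf x}-{\bf y})$, the first summand contributes at most $C\int_0^\infty{\bf g}_{Ct}({\bf x}-{\bf y})\,g^{(0,\infty)}_{Ct}(x_d,y_d)\,dt=C\,G^{gauss}_\H(x,y)$. The second summand contributes $\nu(c(x_d-y_d))\int_6^{|x_d-y_d|^2}{\bf g}_{2t}({\bf x}-{\bf y})\,t\,dt$; bounding $\nu(c(x_d-y_d))\le C|x_d-y_d|^{-1-\alpha/2}e^{-c|x_d-y_d|}$ (from \pref{levymeasure} and \pref{asympt_infty}), using $t^{(3-d)/2}e^{-|{\bf x}-{\bf y}|^2/(16t)}\le C_d|{\bf x}-{\bf y}|^{3-d}$, and keeping the constraint $t\le|x_d-y_d|^2$, one checks this contribution is $\le C|x-y|^{-d}\le C\,G^{gauss}_\H(x,y)$, completing the upper bound.

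I expect the last, ``jump'' contribution to be the real obstacle. Because $p^{(0,\infty)}_t$ is strictly heavier than the Gaussian kernel for small $t$, the term $t\,\nu(c(x_d-y_d))$ cannot be absorbed into $g^{(0,\infty)}_{Ct}$ and must be integrated by hand; its decay in $|x_d-y_d|$ is exponential, whereas the decay it inherits in $|{\bf x}-{\bf y}|$ from the transverse kernel ${\bf g}_{2t}$ is only Gaussian, and the two are reconciled solely because Theorem \ref{density1} restricts this term to $t\le|x_d-y_d|^2$, which converts the factor $e^{-|{\bf x}-{\bf y}|^2/(8t)}$ into effective decay on the scale $e^{-|{\bf x}-{\bf y}|^2/|x_d-y_d|^2}$. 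This step, together with the elementary lower bound $G^{gauss}_\H(x,y)\ge c|x-y|^{-d}$, is exactly where the hypotheses $x_d,y_d\ge1$ and $|x-y|>2$ are used; the rest is routine bookkeeping.
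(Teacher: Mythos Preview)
Your argument is correct, but it takes a noticeably different and more elaborate route than the paper's.

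The paper never attempts the pointwise comparison $p^{(0,\infty)}_t(x_d,y_d)\le C\,g^{(0,\infty)}_{Ct}(x_d,y_d)+\text{(jump)}$ that you organise your proof around. Instead it aims only at the intermediate inequality $G_\H(x,y)\le C\,x_dy_d/|x-y|^d$, obtained by two complementary crude bounds: inserting $p^{(0,\infty)}_t\le C\,x_dy_d/t^{3/2}$ (from Corollary~\ref{density1A}) into Lemma~\ref{density10} yields the bound $x_dy_d/|{\bf x}-{\bf y}|^d$; inserting instead ${\bf g}_{2t}\le c\,t^{-(d-1)/2}$ and invoking the \emph{second} conclusion of Theorem~\ref{density1} yields $x_dy_d/|x_d-y_d|^d$ when $|x_d-y_d|\ge1$. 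Since $|x-y|\approx\max(|{\bf x}-{\bf y}|,|x_d-y_d|)$, these combine. The $\min$ structure of $G^{gauss}_\H$ for $d\ge3$ then comes for free from the global potential bound $G_\H\le U\le C|x-y|^{2-d}$, and $d=2$ is handled by a short separate computation using Lemma~\ref{density} and Theorem~\ref{optimaltail}. In particular no Harnack reduction to $|x_d-y_d|\ge1$ is needed; when $|x_d-y_d|<1$ one simply has $|{\bf x}-{\bf y}|\approx|x-y|$ and the first crude bound already suffices.

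What your approach buys is a direct, uniform comparison with $G^{gauss}_\H$ that handles all dimensions $d\ge2$ and both regimes of the Gaussian Green function at once, at the cost of the extra bookkeeping (Harnack shift, pointwise bound on $p^{(0,\infty)}_t$, and the jump integral). One small point to tighten: the inequality $t^{(3-d)/2}e^{-|{\bf x}-{\bf y}|^2/(16t)}\le C_d|{\bf x}-{\bf y}|^{3-d}$ fails for $d=2$ without the constraint $t\le|x_d-y_d|^2$; your parenthetical ``keeping the constraint'' is essential there and deserves one explicit line. You could also drop the Harnack step altogether: when $|x_d-y_d|<1$ and $t\ge6$ one has $p^{(0,\infty)}_t\le C\,g^{(0,\infty)}_{Ct}$ directly from Corollary~\ref{density1A}, with no jump term.
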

\begin{proof}
The lower bound follows from Lemma \ref{potential_lower}.

 We claim that the following  upper bound holds:
 \begin{equation}\label{Gauss10}
 G_\H(x,y)\le c\frac {x_d y_d}{|x - y|^d}.
 \end{equation}
By \pref{density1A1},
$$p^{(0,\infty)}_t(x_d,y_d)\le C \frac{x_d y_d} {t^{3/2}},\quad  t\ge 1,$$
which together with \pref{density01}  and (\ref{tail}) yield  the
following bound for the transition density
$$p^\H_t(x,y)\le  C {\bf g}_{2t}({\bf y}-{\bf x})\frac{x_d y_d} {t^{3/2}} + ce^{-2t}|x-y|^{-d}, \quad t\ge 1.$$
 Integrating it over $(1, \infty)$  we arrive at
\begin{equation}\label{Gauss102}\int_1^\infty p^\H_t(x,y)dt\le C \frac{x_d y_d} {|{\bf y}-{\bf x}|^d}+
\frac c{|x-y|^d}\le C_1 \frac{x_d y_d} {|{\bf y}-{\bf x}|^d}.
\end{equation}

If $|x_d - y_d|\ge 1$ we apply  \pref{density2}, (\ref{tail}) and
Theorem \ref{density1} to arrive at
\begin{eqnarray}\int_1^\infty p^\H_t(x,y)dt
&\le& C\int_1^\infty
t^{-(d-1)/2}p^{(0,\infty)}_t(x_d,y_d)dt + \frac c{|x-y|^d}\nonumber\\
&\le& C\frac {x_d y_d}{|x_d -
y_d|^d}.\label{Gauss101}\end{eqnarray}
Next note that by Lemma  \ref{transden2}  we can estimate
$$\int_0^1 p^\H_t(x,y)dt\le \int_0^1 p_t(x-y)dt\le \frac c{|x-y|^d}.$$
This combined with \pref{Gauss102} and \pref{Gauss101} implies
\pref{Gauss10}.

Now let $d\ge3$.  Since   (see \pref{0-potential}),
$$G_\H(x,y)\le  C   \frac {1}{|x - y|^{d-2}}$$  we have the following bound for
$|x-y|>2$,
$$G_\H(x,y)\le  C    \min\kl   \frac {x_d y_d}{|x - y|^d},\frac{1}{|x-y|^{d-2}}\kr\approx G^{gauss}_\H(x,y),$$
 where the last equivalence follows from (\ref{gaussGreen}). This  completes the proof in this case.

Now we finish the proof for  $d=2$. By \pref{Gauss10}, for
$\frac{x_2y_2}{|x-y|^2}\le 1$, we have
$$G_\H(x,y)\le C \frac{x_2y_2}{|x-y|^2} \approx  \ln \left(1+4\frac{x_2y_2}{|x-y|^2}\right)={2\pi}G^{gauss}_\H(x,y),$$
 where the last equality is just (\ref{gaussGreen2}).
If $\frac{x_2y_2}{|x-y|^2}>1$, using Lemmas \ref{transden2} and
\ref{density} together  with Theorem \ref{optimaltail} we obtain
\begin{eqnarray*}
G_\H(x,y)&\le&\int^{|x-y|^2}_0p_t(x-y)dt\\& &\; +\,
c\int^{\infty}_{|x-y|^2}t^{-1}P^x(\tau_\H>t/3)P^y(\tau_\H>t/3)dt\\
&\le&\int^{|x-y|^2}_0\frac{c}{|x-y|^2}dt+C\int^{x_2y_2}_{|x-y|^2}t^{-1}dt
+ Cx_2y_2 \int^\infty_{x_2y_2}t^{-2}dt\\
&\le& c +
C\ln\left(\frac{x_2y_2}{|x-y|^2}\right) \\
&\le& C\ln\left(1+4\frac{x_2y_2}{|x-y|^2}\right)=2C \pi
G^{gauss}_\H(x,y),
\end{eqnarray*}
which completes the proof for $d=2$.
\end{proof}

Now we are ready to prove the main result of this section.
\begin{thm}\label{greenhlafspaceest}
For $d\ge 3$ and $x,y\in \H$:
$$G_\H(x,y)\approx \min\left\{\frac{ (x_d\vee x_d^{\alpha/2})
(y_d \vee y_d^{\alpha/2})}{|x-y|^d}, \frac1{|x-y|^{d-2}}\right\},
\  |x-y|> 3,
$$
$$G_\H(x,y)\approx
\left[\left(\frac{ x_d\wedge y_d}{|x-y|}\right)^{\alpha/2}
\wedge1\right] \frac{1 }{|x-y|^{d-\alpha}}, \  |x-y|\le 3.$$
 For $d= 2$ and $x,y\in \H$:
 $$G_\H(x,y)\approx  \ln \left(1+ 4\frac{(x_2\vee x_2^{\alpha/2})
(y_2 \vee y_2^{\alpha/2})}{|x-y|^2}\right),\  |x-y|> 3,
$$
$$G_\H(x,y) \approx
\left[\left(\frac{ x_2\wedge y_2}{|x-y|}\right)^{\alpha/2}
\wedge1\right] \frac{1 }{|x-y|^{2-\alpha}} +\ln(1\vee( x_2\wedge
y_2)), \ |x-y|\le 3.$$
\end{thm}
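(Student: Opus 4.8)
The plan is to treat the four regimes separately, combining the transition-density bounds from Lemma \ref{density10} (together with \pref{tail} and Theorem \ref{density1}) for the upper bounds with the subordination lower bound from Lemma \ref{potential_lower} and the Boundary Harnack Principle (Lemma \ref{BHP1}) to handle points near the boundary. The large-distance case $|x-y|>3$ with $x_d,y_d\ge 1$ is already done: it is exactly Theorem \ref{GaussEstimate} combined with the explicit Gauss Green function formulas \pref{gaussGreen} and \pref{gaussGreen2}, noting that $x_d\vee x_d^{\alpha/2}=x_d$ there. So the real work is (i) the small-distance regime $|x-y|\le 3$, and (ii) extending the large-distance regime to points close to the boundary, where $x_d\vee x_d^{\alpha/2}$ genuinely differs from $x_d$.

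For the small-distance case $|x-y|\le 3$ I would first recall from \cite{ByczRyzMal} (cited right after Theorem \ref{rel1Poisson}, and made precise in Theorem \ref{Green1est} for the relevant range $\alpha<d$) that $G_\H(x,y)\approx G_\H^1(x,y)$ when $|x-y|$ is bounded, since the difference $G_\H-G_\H^1=\int_0^\infty(1-e^{-t})p_t^\H(x,y)\,dt$ is controlled by the tail estimates (Theorem \ref{optimaltail}, Lemma \ref{density}) and is of lower order when points are close. Then the stated formula is just the last line of Theorem \ref{Green1est} rewritten: $K_{(d-\alpha)/2}(r)/r^{(d-\alpha)/2}\approx r^{\alpha-d}$ for $r\le 3$ by \pref{asympt0}, giving the factor $|x-y|^{-(d-\alpha)}$, and the bracket $\big[(\tfrac{x_d\wedge y_d}{|x-y|\wedge 1})^{\alpha/2}\wedge 1\big]$ becomes $\big[(\tfrac{x_d\wedge y_d}{|x-y|})^{\alpha/2}\wedge 1\big]$ since $|x-y|\le 3$ only changes constants. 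For $d=2$, $\alpha<2$ so $\alpha<d$ still, but now $|x-y|^{-(2-\alpha)}$ blows up only mildly and one must be careful about the $K_0$-type logarithmic correction when $x_2\wedge y_2$ is large relative to $|x-y|$; this is the source of the extra term $\ln(1\vee(x_2\wedge y_2))$, which should come out of a direct computation on $G_\H^1$ using the integral formula in Theorem \ref{rel1Poisson} together with $K_0(r)\cong -\log r$ from \pref{asympt00}, plus the correction $G_\H-G_\H^1$ which now contributes genuinely (because $\int_0^\infty(1-e^{-t})p_t^\H\,dt$ is comparable to a logarithm here, via Lemma \ref{density} and Theorem \ref{optimaltail} as in the $d=2$ computation at the end of the proof of Theorem \ref{GaussEstimate}).

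For the large-distance case $|x-y|>3$ with one or both of $x_d,y_d<1$, the upper bound comes from combining \pref{density2}, \pref{tail} and Theorem \ref{density1}: integrating $p_t^\H(x,y)\le c\,t^{-(d-1)/2}p_t^{(0,\infty)}(x_d,y_d)+q_t(x-y)$ over $t\ge 1$ and using the substitute for Theorem \ref{density1} valid for $x_d,y_d$ possibly less than $1$ — here one reduces to $x_d,y_d\ge 1$ by a Boundary Harnack / Harnack argument exactly as in the one-dimensional Theorem \ref{green}, writing $G_\H(x,y)=E^xG_\H(X_{\tau_B},y)$ for a suitable ball $B$ touching the boundary and invoking Lemma \ref{BHP1} to pull out the factor $x_d^{\alpha/2}$ when $x_d<1$; the $t\le 1$ part is handled by $\int_0^1 p_t(x-y)\,dt\le c|x-y|^{-d}$ via Lemma \ref{transden2}. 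The matching lower bound splits: when $x_d,y_d\ge 1$ it is Lemma \ref{potential_lower} plus \pref{gaussGreen}; when say $x_d<1$ one applies BHP again, $G_\H(x,y)\approx G_\H(x',y)\,x_d^{\alpha/2}$ with $x'$ the boundary-translate at height $1$, reducing to the already-known case and producing the factor $x_d^{\alpha/2}=x_d\vee x_d^{\alpha/2}$. The main obstacle, as in the one-dimensional argument, is proving the upper bound $G_\H(x,y)\le c\,x_d^{\alpha/2}(\cdots)$ uniformly in the position of $y$ when $x$ is near the boundary and $y$ is far: one cannot simply integrate the transition-density bound because the BHP constant must be shown independent of $y$; the fix is the indirect route used for \pref{linBound} in Theorem \ref{green} — bound $\int_{\text{slab}} V(x,y)\,dy$ using \pref{intgral}, represent $G_\H(x,y)$ as a harmonic function and exit to a far half-space, then use Harnack on a boundary ball of radius $\sim 1$ to transfer the bound to all $x$ at a given height. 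Assembling these pieces and checking that $\min\{x_d y_d|x-y|^{-d},|x-y|^{-(d-2)}\}$ with $x_d$ replaced by $x_d\vee x_d^{\alpha/2}$ is exactly what the combined estimates give (for $d=2$, the $\ln(1+4(\cdots))$ form) completes the proof.
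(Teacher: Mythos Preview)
Your overall strategy matches the paper's: the small-distance regime is quoted from \cite{ByczRyzMal} (where $G_\H\approx G_\H^1$ for $d\ge 3$ and $G_\H\approx G_\H^1+\ln(1\vee(x_2\wedge y_2))$ for $d=2$) and then simplified via Theorem \ref{Green1est}; the large-distance regime with both heights $\ge 1$ is Theorem \ref{GaussEstimate}; and points near the boundary are handled by BHP. That much is right.

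Where you go astray is in the boundary reduction for $|x-y|>3$. You write that ``the BHP constant must be shown independent of $y$'' and then propose to resurrect the indirect argument behind \pref{linBound} (integrating over a slab, exiting to a far half-space, Harnack on a unit boundary ball). This is unnecessary and reflects a misreading of Lemma \ref{BHP1}: the comparability constants in $f(x)\approx f(\mathbf{1})x_d^{\alpha/2}$ are \emph{absolute}, i.e.\ they depend only on $\alpha$ and $d$, not on the particular regular harmonic function $f$. That is precisely the content of the Boundary Harnack Principle. So once you know $G_\H(\cdot,y)$ is regular harmonic on $D(x^*)=B(x^*,\sqrt{2})\cap\H$ and vanishes on $\H^c$ (Lemma \ref{continuity}; and $y\notin D(x^*)$ since $|x-y|>3$), Lemma \ref{BHP1} gives directly
\[
G_\H(x,y)\approx x_d^{\alpha/2}\,G_\H(\tilde{x},y),\qquad \tilde{x}=(x_1,\dots,x_{d-1},1),
\]
with constants independent of $y$. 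Iterating in the second variable when $y_d<1$ and then invoking Theorem \ref{GaussEstimate} for $G_\H(\tilde{x},\tilde{y})$ finishes the large-distance case in a few lines. The machinery from \pref{linBound} was needed in the one-dimensional proof only to \emph{establish} the interior bound (the analogue of Theorem \ref{GaussEstimate}) in the first place; once that theorem is in hand, BHP alone does the boundary work, and you should drop the slab-integration detour entirely.
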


\begin{proof}
 First assume $|x-y|\le 3$. In the  paper
\cite{ByczRyzMal} it was proved
$$G_\H(x,y)\approx G^1_\H(x,y), \quad d\ge 3$$
and
$$G_\H(x,y)\approx G^1_\H(x,y)+\ln(1\vee( x_2\wedge y_2)) , \quad d= 2.$$
Since $|x-y|\le 3$ by Theorem \ref{Green1est} we obtain
$$G_\H(x,y)\approx
\left[\left(\frac{x_d\wedge y_d}{|x-y|}\right)^{\alpha/2}
\wedge1\right] \frac{1 }{|x-y|^{d-\alpha}},\quad d\ge 3$$ and
$$ G_\H(x,y)\approx
\left[\left(\frac{x_2\wedge y_2}{|x-y|}\right)^{\alpha/2}
\wedge1\right] \frac{1 }{|x-y|^{2-\alpha}} +\ln(1\vee( x_2\wedge
y_2)),\quad d=2.$$ This yields the bound in the case $|x-y|\le 3$.

We introduce the following notation $\tilde{x}=(x_1,\dots,
x_{d-1}, 1\vee x_d )$, $x^*=(x_1,\dots, x_{d-1}, 0)$.
 Now  assume  $|x-y|> 3$ and observe that implies that $|x-y|\approx |\tilde{x}-\tilde{y}|> 2$.

 Then if both points are away from the
boundary ( $x_d\wedge y_d\ge 1$) we use  Lemma \ref{GaussEstimate}
to have
$$G_\H(x,y)\approx G^{gauss}_\H(x,y)= G^{gauss}_\H(\tilde{x},\tilde{y}).
$$
 Next suppose that
$x_d< 1\le y_d$.
 Let
$D(x^*)=B(x^*,\sqrt{2})\cap\H$. Then $y\notin D(x^*)$ and
$G_\H(\cdot,y)$ is a regular harmonic function on $D(x^*)$
vanishing on $\H^c$. Hence by BHP (see Lemma \ref{BHP1}) and next
by Theorem \ref{GaussEstimate}  we have
$$G_\H(x,y)\approx  x_d^{\alpha/2} G_\H(\tilde{x},y)= x_d^{\alpha/2} G_\H(\tilde{x},\tilde{y})\approx  x_d^{\alpha/2}G^{gauss}_\H(\tilde{x},\tilde{y}).$$

A similar argument applies for $x_d, y_d< 1$. Notice that $x
\notin D(y^*)$ and $y \notin D(x^*)$. Hence  $G_\H(\cdot,y)$ and
$G_\H(x,\cdot)$ are  regular harmonic function on $D(x^*)$ and
$D(y^*)$, respectively, vanishing on $\H^c$. Hence Lemma
\ref{BHP1}  and Theorem \ref{GaussEstimate} imply that
$$G_\H(x,y)\approx x_d^{\alpha/2}y_d^{\alpha/2}G_\H(\tilde{x},\tilde{y})
 \approx x_d^{\alpha/2}y_d^{\alpha/2}G^{gauss}_\H(\tilde{x},\tilde{y}).$$
Taking into account all cases we have
 $$G_\H(x,y)\approx (1\wedge x_d)^{\alpha/2}(1\wedge y_d)^{\alpha/2}G^{gauss}_\H(\tilde{x},\tilde{y}), \quad |x-y|> 3.$$
 Applying (\ref{gaussGreen}) and (\ref{gaussGreen2}) we can rewrite the above bound
 as
 $$G_\H(x,y)\approx (1\wedge x_d)^{\alpha/2}(1\wedge y_d)^{\alpha/2}\min\left\{\frac{ (x_d\vee 1) (y_d \vee 1) }{|\tilde{x}-\tilde{y}|^d}, \frac1{|\tilde{x}-\tilde{y}|^{d-2}}\right\},$$
for $ d\ge 3$, and
 $$G_\H(x,y)\approx (1\wedge x_d)^{\alpha/2}(1\wedge y_d)^{\alpha/2}
  \ln\left(1+4\frac{(x_2\vee 1) (y_2\vee 1)}{|\tilde{x}-\tilde{y}|^2}\right),$$ for
  $d=2$. Taking into account $|x-y|\approx |\tilde{x}-\tilde{y}|\ge 1$, for $|x-y|> 3$, we finally arrive at
 $$G_\H(x,y)\approx \min\left\{\frac{ (x_d\vee x_d^{\alpha/2})
(y_d \vee y_d^{\alpha/2})}{|x-y|^d}, \frac1{|x-y|^{d-2}}\right\},
\quad d\ge 3
$$and
$$G_\H(x,y)\approx  \ln\left(1+4\frac{(x_2^{\alpha/2}\vee x_2)  (y_2^{\alpha/2}\vee y_2)}{|x-y|^2}\right), \quad d=2.$$
\end{proof}

Now we compare the Green functions for half-space for the
relativistic process and for the corresponding stable process, so
we recall the formula of the Green function in the stable case
(see \cite{BGR}):
$$G^{stable}_{\H}(x,y)=C(\alpha,d)|x-y|^{\alpha-d}\int_0^{\frac{4x_dy_d}{|x-y|^2}}\frac{t^{\alpha/2-1}}{(t+1)^{d/2}}dt.$$
One can derive   sharp estimates from the above formula. Our
results from Theorem \ref{greenhlafspaceest}, Theorem \ref{green}
and Theorem \ref{Green1est} show that for the points $x,y\in\H$
such that $|x-y|\le 2$ the Green functions of the half-space $\H$
for  the relativistic process and for the corresponding stable
process are comparable if  $d\ge 3$. If $d=1$ or $d=2$ they are
also  comparable but we have to assume additionally that  the
points are near the boundary.

\begin{rem}
Suppose that $|x-y|\le 2$  then
$$G_\H(x,y)\approx \left\{%
\begin{array}{ll}
    G^{stable}_{\H}(x,y), & \hbox{$d\ge 3$;} \\
    G^{stable}_{\H}(x,y)+\ln(1\vee(x_2\wedge y_2)), & \hbox{$d=2$;} \\
    G^{stable}_{\H}(x,y)+(x\wedge y)\vee (x\wedge y)^{\alpha/2}, & \hbox{$d=1$.} \\
\end{array}%
\right.    $$ From  the estimates obtained in Theorem
\ref{greenhlafspaceest} and Theorem \ref{green} we can infer  that
$$G_\H(x,y)\ge C( G^{stable}_{\H}(x,y)+G^{gauss}_{\H}(x,y)),\quad x,y\in\H.$$
\end{rem}

\section{Green functions  for intervals}
\setcounter{equation}{0}

In this section we provide optimal estimates for Green functions
of bounded intervals. We know that for any  interval    the Green
function is comparable with the corresponding Green function of
the symmetric process. That is for the interval $(0,R)$, for $R\le
R_0$, we have
\begin{equation} C(R_0)^{-1} G_{(0,R)}^{stable}(x,y) \le G_{(0,R)}(x,y)\le C(R_0)G_{(0,R)}^{stable}(x,y), \label{intervalcomparison}\end{equation}
where $0<x,y < R.$
However, if $R_0$ grows, then the constant   $C(R_0)$ tends to
$\infty$, so the above bound is not optimal in general case. The
aim of this section is to provide optimal bounds for large
intervals. We recall known estimates for stable cases:
\begin{equation}\label{greenstable}
G_{(0,R)}^{stable}(x,y)\approx \left\{%
\begin{array}{ll}
    \min\left\{\frac{1}{|x-y|^{1-\alpha}},
    \frac{(\delta_R(x)\delta_R(y))^{\alpha/2}}{|x-y|}\right\}, & \hbox{$\alpha<1$,} \\
    \ln\left(1+\frac{(\delta_R(x)\delta_R(y))^{1/2}}{|x-y|}\right), & \hbox{$\alpha=1$,} \\
    \min\left\{(\delta_R(x)\delta_R(y))^{(\alpha-1)/2},
    \frac{(\delta_R(x)\delta_R(y))^{\alpha/2}}{|x-y|}\right\}, & \hbox{$\alpha>1$,} \\
\end{array}%
\right.
\end{equation}
where $\delta_R(x)=x\wedge(R-x)$.

 We start with the proposition showing that for points $x,y$ in the first
 half of the interval  the Green function
 of the interval and the  Green function of $(0,\infty)$ are comparable.
\begin{prop}\label{Greenintervalprop1}
Let $R\ge 4$. For $x,y\le R/2+1$ we have
$$G_{(0,R)}(x,y)\approx G_{(0,\infty)}(x,y).$$
\end{prop}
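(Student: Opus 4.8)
The upper bound $G_{(0,R)}(x,y)\le G_{(0,\infty)}(x,y)$ is immediate from $(0,R)\subset(0,\infty)$ and the resulting domain monotonicity of the killed transition densities, so the whole content is the reverse inequality. The plan is to use the identity obtained from \pref{greenpot100} with $D_1=(0,R)$, $D_2=(0,\infty)$ after letting $\lambda\to0$ (monotone convergence, exactly as in the proof of Lemma \ref{continuity}):
$$G_{(0,\infty)}(x,y)=G_{(0,R)}(x,y)+E^x\left[\tau_{(0,R)}<\tau_{(0,\infty)};\,G_{(0,\infty)}(X_{\tau_{(0,R)}},y)\right],$$
and to bound the last term by a constant multiple of $G_{(0,R)}(x,y)$. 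Assume without loss of generality $x\le y\le R/2+1$. Since $R\ge4$, on $\{\tau_{(0,R)}<\tau_{(0,\infty)}\}$ the exit point $z:=X_{\tau_{(0,R)}}$ satisfies $z\ge R$, hence $z\wedge y=y$ and $|z-y|\ge R/2-1\ge1$, so Remark \ref{Greenhalflineasymp} gives $G_{(0,\infty)}(z,y)\le C(y\vee y^{\alpha/2})$. Combining this with Proposition \ref{MRestimate} (and $R^{\alpha/2}\vee R=R$) yields
$$E^x\left[\tau_{(0,R)}<\tau_{(0,\infty)};\,G_{(0,\infty)}(X_{\tau_{(0,R)}},y)\right]\le C(y\vee y^{\alpha/2})\,P^x(\tau_{(0,R)}<\tau_{(0,\infty)})\le C'\,\frac{(x\vee x^{\alpha/2})(y\vee y^{\alpha/2})}{R}.$$

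The crux — and the step I expect to be the real obstacle — is therefore the lower bound
$$\frac{(x\vee x^{\alpha/2})(y\vee y^{\alpha/2})}{R}\le C_*\,G_{(0,R)}(x,y),\qquad x\le y\le R/2+1,\ R\ge4,$$
with $C_*$ independent of $x,y,R$; keeping this constant uniform is the delicate point, and it is exactly why one splits at the level $1$ and why the hypothesis $R\ge4$ is convenient (it guarantees $R-y\ge R/4$, $y\le\tfrac34R$ and $R/2-1\ge1$). I would prove it by cases. If $x>1$ (hence $y>1$), Lemma \ref{potential_lower} and the explicit formula \pref{gaussGreen3} give $G_{(0,R)}(x,y)\ge\frac2\alpha\,\frac{x(R-y)}{R}$, and since $R-y\ge R/4$ and $y\le\tfrac34R$ this is $\ge c\,xy/R$, as needed. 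If $x\le1$ then $x\vee x^{\alpha/2}=x^{\alpha/2}$ and $(y\vee y^{\alpha/2})/R\le3/4$, so it suffices to check $G_{(0,R)}(x,y)\ge c\,x^{\alpha/2}$: when $y>\sqrt2$, the function $G_{(0,R)}(\cdot,y)$ is regular harmonic on $(0,\sqrt2)$ (Lemma \ref{continuity}) and vanishes on $(-\infty,0]$, so the boundary Harnack principle (Lemma \ref{BHP1}) gives $G_{(0,R)}(x,y)\approx G_{(0,R)}(1,y)\,x^{\alpha/2}$ with $G_{(0,R)}(1,y)\ge c$ again from Lemma \ref{potential_lower} and \pref{gaussGreen3}; when $y\le\sqrt2$ one uses instead $G_{(0,R)}(x,y)\ge G_{(0,3)}(x,y)$ together with \pref{intervalcomparison} and the explicit stable estimate \pref{greenstable}, for which $G^{stable}_{(0,3)}(x,y)\ge c\,(xy)^{\alpha/2}\ge c'\,x^{\alpha/2}(y\vee y^{\alpha/2})$ is an elementary computation (using $\delta_3(x)=x$, $\delta_3(y)=y$ and $|x-y|\le y$).

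Putting the two halves together, the error term is $\le C'C_*\,G_{(0,R)}(x,y)$, hence $G_{(0,\infty)}(x,y)\le(1+C'C_*)\,G_{(0,R)}(x,y)$, which combined with the trivial upper bound gives $G_{(0,R)}(x,y)\approx G_{(0,\infty)}(x,y)$. The only genuinely non-routine pieces are the uniform lower bound for $G_{(0,R)}(x,y)$ above (where the naive attempt of bounding the error term by a fraction of $G_{(0,\infty)}$ fails, since near $x,y\approx R/2$ the two quantities are only borderline comparable), and the subcase $y\le\sqrt2$, where one must fall back on the short-interval comparison with the isotropic stable process rather than on the Brownian Green function.
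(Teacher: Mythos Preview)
Your argument is correct and uses the same core ingredients as the paper (the identity \pref{greenpot100}, the Brownian lower bound of Lemma~\ref{potential_lower}, BHP, and comparison with the stable Green function on a short interval), but organised differently. The paper applies \pref{greenpot100} at $\lambda=1$ rather than $\lambda=0$: since $G^1_{(0,\infty)}(z,y)$ decays exponentially in $|z-y|$ (Theorem~\ref{Green1est}), the difference $G^1_{(0,\infty)}-G^1_{(0,R)}$ is bounded by $Ce^{-R/2}R^{-1+\alpha/2}G^1_{(0,\infty)}(x,y)$, which is negligible once $R$ exceeds some $R_0$; the range $4\le R\le R_0$ is then dispatched in one stroke via \pref{intervalcomparison}. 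Your $\lambda=0$ version trades this exponential gain for a merely polynomial error $\sim (x\vee x^{\alpha/2})(y\vee y^{\alpha/2})/R$, which forces you to establish the uniform lower bound $G_{(0,R)}(x,y)\ge c\,(x\vee x^{\alpha/2})(y\vee y^{\alpha/2})/R$ directly; the payoff is that all $R\ge4$ are treated at once, with no large-$R$/moderate-$R$ split. Both routes end up invoking BHP for $x$ near the boundary and the short-interval stable comparison for the residual near-boundary case, so the difference is really the choice of resolvent parameter and the resulting bookkeeping.
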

\begin{proof}
Throughout the whole proof we  assume that $x\le y\le R/2+1$.
Notice that it is enough to prove that
$$G_{(0,R)}(x,y)\ge c G_{(0,\infty)}(x,y),$$ for $x< 1$ or
$|x-y|<1$. Indeed, by Lemma \ref{potential_lower} and Remark
\ref{Greenhalflineasymp} we obtain for $x\ge1$ and $|x-y|\ge1$,
$$
G_{(0,R)}(x,y)\ge \frac2\alpha G^{gauss}_{(0,R)}(x,y)=\frac2\alpha
x(1-y/R)\ge \frac1{2\alpha} x\ge c G_{(0,\infty)}(x,y).
$$

We claim that
\begin{equation}
G_{(0,R)}(x,y)\ge CG^{1}_{(0,\infty)}(x,y), \quad
|x-y|<2.\label{lowerbound100}
\end{equation}

Applying (\ref{greenpot100}) with $D_2=(0,\infty)$ and $D_1=(0,R)$
we have
\begin{eqnarray*}
& &\!\!\! G^1_{(0,\infty)}(x,y)-G^1_{(0,R)}(x,y)\\& & =\,
E^x\left[\tau_{(0,R)}<\tau_{(0,\infty)};e^{-\tau_{(0,R)}}G^1_{(0,\infty)}(X_{\tau_{(0,R)}},y)\right]\\
&&\le \,\sup_{z\ge
R}G^1_\H(z,y)P^x(\tau_{(0,R)}<\tau_{(0,\infty)}).
\end{eqnarray*}
Next, by Theorem \ref{Green1est} and Proposition \ref{MRestimate}
we obtain
$$G^1_{(0,\infty)}(x,y)-G^1_{(0,R)}(x,y)\le C e^{-R/2}R^{-2+\alpha/2}(x^{\alpha/2}\vee x).
$$
Hence  Lemma \ref{trivialbound} yields
$$G^1_{(0,\infty)}(x,y)-G^1_{(0,R)}(x,y)\le C e^{-R/2}R^{-1+\alpha/2}G^1_{(0,\infty)}(x,y).$$
This  proves (\ref{lowerbound100}) for $R>R_0$, if $R_0>4$ is
large enough.

To handle the case $4\le R\le R_0$ we apply
(\ref{intervalcomparison}) together with
  \pref{greenstable} and Theorem \ref{Green1est}  to obtain
$$G_{(0,R)}(x,y)\ge c(R_0)G^{stable}_{(0,R)}(x,y)\ge c G^1_{(0,\infty)}(x,y),$$
which ends the proof of (\ref{lowerbound100}).

That is, by Theorem \ref{green}, for $|x-y|<2,\ x\ge 1$, we get
\begin{eqnarray}\label{grintl1}G_{(0,R)}(x,y)&\ge&
c(G^1_{(0,R)}(x,y)+G^{gauss}_{(0,R)}(x,y))\nonumber\\&\ge&
c(G^1_{(0,\infty)}(x,y)+x)\ge c G_{(0,\infty)}(x,y).\end{eqnarray}
Next, for $x< 1$ and $y\ge 2$, by BHP (Lemma \ref{BHP1}),
\begin{eqnarray}\label{grintl2}G_{(0,R)}(x,y)&\ge& c G_{(0,R)}(1,y)
x^{\alpha/2}\ge c x^{\alpha/2} G_{(0,\infty)}(1,y)\nonumber\\
&\ge& c G_{(0,\infty)}(x,y) .\end{eqnarray} Combining
\pref{grintl1} and \pref{grintl2} give us
$$G_{(0,R)}(x,y)\approx G_{(0,\infty)}(x,y),$$
for $x,y\le R/2+1$.
\end{proof}

\begin{prop}\label{Greenintervalprop2}Let $R\ge 4$. Suppose that $1\le x\le y\le R-1$, and
$|x-y|\ge 1$ then
$$G_{(0,R)}(x,y)\approx  G^{gauss}_{(0,R)}(x,y)=x(R-y)/R .$$
\end{prop}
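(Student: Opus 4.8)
The plan is to sandwich $G_{(0,R)}(x,y)$ between constant multiples of $G^{gauss}_{(0,R)}(x,y)=x(R-y)/R$, using the results already established for the half-line. The lower bound is immediate: by Lemma \ref{potential_lower}, $G_{(0,R)}(x,y)\ge \frac2\alpha G^{gauss}_{(0,R)}(x,y)$, so only the upper bound requires work. For the upper bound I would split according to the position of $y$ relative to $R/2$. By the symmetry $G_{(0,R)}(x,y)=G_{(0,R)}(R-x,R-y)$, it suffices to treat the case $y\le R/2+1$ (together with its mirror image), plus the "middle" region.

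\textbf{Case $x\le y\le R/2+1$.} Here both points lie in the first half of the interval, so Proposition \ref{Greenintervalprop1} gives $G_{(0,R)}(x,y)\approx G_{(0,\infty)}(x,y)$. Since $x,y\ge 1$ and $|x-y|\ge 1$, Remark \ref{Greenhalflineasymp} yields $G_{(0,\infty)}(x,y)\approx x\vee x^{\alpha/2}=x$ (as $x\ge1$). On the other hand, for $y\le R/2+1$ and $R\ge 4$ one has $R-y\ge R/2-1\ge cR$, so $G^{gauss}_{(0,R)}(x,y)=x(R-y)/R\approx x$. Hence $G_{(0,R)}(x,y)\approx x\approx G^{gauss}_{(0,R)}(x,y)$ in this case. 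The mirror case $R/2-1\le x\le y\le R-1$ follows by applying the reflection $v\mapsto R-v$.

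\textbf{Case $x\le R/2+1\le R/2-1\le y$ — i.e. $x$ in the first half, $y$ in the second half, with $|x-y|\ge1$.} This is the genuinely new region and I expect it to be the main obstacle. Here I would use the regular harmonicity of $G_{(0,R)}(x,\cdot)$ (Lemma \ref{continuity}) on a suitable subinterval around $y$ together with the Boundary Harnack Principle near the right endpoint $R$, analogously to how Theorem \ref{green} handled points near the boundary of $(0,\infty)$. More precisely: if $y$ is close to $R$ (say $R-y<2$), reflect and run the argument of Proposition \ref{Greenintervalprop1}/BHP to get $G_{(0,R)}(x,y)\approx (R-y)^{\alpha/2}G_{(0,R)}(x,y_0)$ for $y_0$ with $R-y_0=1$, and then apply the previous case to $G_{(0,R)}(x,y_0)$ to reduce to $x(R-y_0)/R\approx x/R$, giving $G_{(0,R)}(x,y)\approx x(R-y)/R$ once one checks $R-y\ge 1$ forces $(R-y)^{\alpha/2}\asymp (R-y)$ is \emph{not} available — so actually one must be careful: for $R-y\ge1$ one wants the linear, not the $\alpha/2$, behaviour, so BHP is only invoked when $R-y<1$, which contradicts $|x-y|\ge1$ only if $x$ is also near $R$. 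Since $x\le R/2+1$ and $y\ge R-1$ with $R\ge4$, we get $|x-y|\ge R/2-2$, and $R-y$ could still be as small as... no: $y\le R-1$ by hypothesis, so $R-y\ge1$ always. Thus BHP is never needed for $y$; instead, for $1\le x$, $R-y\ge1$, $|x-y|\ge1$ with $x$ and $R-y$ both $\ge1$, I would again combine Lemma \ref{potential_lower} for the lower bound with an upper bound obtained by writing $G_{(0,R)}(x,y)=E^x G_{(0,R)}(X_{\tau_{(0,R/2+1)}},y)$ or directly by the transition-density estimate of Lemma \ref{density} and the optimal tail bound Theorem \ref{optimaltail} applied to $\tau_{(0,R)}$, estimating $\int_0^\infty p^{(0,R)}_t(x,y)\,dt$ by splitting at $t\approx|x-y|^2$ and using that for $t\ge|x-y|^2$ the killed density is controlled by $t^{-1/2}(\frac{x}{\sqrt t}\wedge1)(\frac{R-y}{\sqrt t}\wedge1)$ via the same reflection/subordination argument as in Corollary \ref{density1A} and Theorem \ref{density1}, now for a bounded interval. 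Integrating this yields $\lesssim x(R-y)/R$. The hard part will be making the killed-interval transition-density bound precise — i.e. proving the analogue of Theorem \ref{density1} with \emph{two} absorbing endpoints, so that both factors $x/\sqrt t\wedge1$ and $(R-y)/\sqrt t\wedge1$ appear — and then checking that the integral of $t^{-1/2}$ times these two factors over $t\in[1,\infty)$, plus the short-time contribution $\int_0^1 p_t(x-y)\,dt\lesssim|x-y|^{-1}\le1\lesssim x(R-y)/R$ (using $x,R-y\ge1$, $R\lesssim xR$... which needs $x(R-y)\gtrsim R$, true since $x\ge1$ and $R-y\ge1$ forces $x(R-y)\ge 1$ but we need $\gtrsim R$ — so in fact when $x(R-y)\ll R$ the short-time term must be handled separately, e.g. by noting $|x-y|\asymp R$ there and using Lemma \ref{transden2} to get $\int_0^1 p_t(x-y)dt\lesssim R^{-1}\asymp R^{-1}$, matching $x(R-y)/R$ only if $x(R-y)\gtrsim 1$, which holds), indeed returns $x(R-y)/R$. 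Assembling the three cases and invoking the $v\mapsto R-v$ symmetry completes the proof.
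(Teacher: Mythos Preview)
Your lower bound and the treatment of the ``same half'' case ($x\le y\le R/2+1$, and its reflection) are correct and essentially match the paper: the paper simply bounds $G_{(0,R)}(x,y)\le G_{(0,\infty)}(x,y)\le cx$ for $y\le 3R/4$, whereas you go through Proposition~\ref{Greenintervalprop1}, but the outcome is the same.

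The genuine gap is the cross-half case $x\le R/4$, $y\ge 3R/4$. You mention the stopping representation $G_{(0,R)}(x,y)=E^xG_{(0,R)}(X_{\tau_{(0,R/2)}},y)$ but do not develop it, and instead try to build a two-endpoint analogue of Theorem~\ref{density1} for $p_t^{(0,R)}$. That second route would require proving a new result not in the paper, and your own discussion shows you do not have it (the short-time and large-time bookkeeping never closes). The paper stays with the stopping approach and finishes it with two ingredients you do not identify. Writing $\eta=\tau_{(0,R/2)}$, one bounds $G_{(0,R)}(X_\eta,y)\le G_{(0,\infty)}(R-X_\eta,R-y)$ on $\{X_\eta>R/2\}$; by Remark~\ref{Greenhalflineasymp} this is $\le c(R-y)$ whenever $|X_\eta-y|\ge 1$, and then Proposition~\ref{MRestimate} gives $P^x(X_\eta>R/2)\le c\,x/R$, producing the term $c\,x(R-y)/R$. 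The dangerous contribution is $\{|X_\eta-y|<1\}$, where $G_{(0,\infty)}(R-X_\eta,R-y)$ can be large; here the paper uses Lemma~\ref{poissonKernel}: the Poisson kernel of $(0,R/2)$ at a point $z\ge y-1\ge 3R/4-1$ decays like $E^x\eta\cdot e^{-(z-R/2)}\le c\,Rx\,e^{-R/4}$, which annihilates the possible singularity and again yields $\le c\,x(R-y)/R$.

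In short: your plan is right up to the hard case, but the missing idea is to combine the stopping representation with (i) Proposition~\ref{MRestimate} for the exit probability and (ii) the exponential decay of the Poisson kernel (Lemma~\ref{poissonKernel}) to control the event $\{|X_\eta-y|<1\}$. No new transition-density estimate for the bounded interval is needed.
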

\begin{proof}
 Due to Lemma \ref{potential_lower} we only need to  prove upper bound. Suppose that $1\le x\le
 y\le R-1$ and $|x-y|\ge 1$.

At first, let additionally $y\le 3/4 R$, then by Remark
\ref{Greenhalflineasymp},
    \begin{equation}\label{grintp1}
    G_{(0,R)}(x,y)\le G_{(0,\infty)}(x,y)\le c x\le
    4cx(R-y)/R.
    \end{equation}
By symmetry and the above inequality we have
\begin{eqnarray}\label{grintp2}G_{(0,R)}(x,y)&=&G_{(-R,0)}(-x,-y)=G_{(0,R)}(R-x,R-y)\nonumber\\
&\le& 4c(R-y)x/R,\end{eqnarray}
 for $R/4\le x\le R/2\le y\le R-1$ and $|x-y|>1$.

Hence it remains to consider the case $1\le x\le R/4$ and $3R/4\le
y\le R-1$. Denote $\eta=\tau_{(0,R/2)}$. Since
$G_{(0,R)}(\cdot,y)$ is regular harmonic on $(0,R/2)$, so by
Proposition  \ref{Greenintervalprop1} and Remark
\ref{Greenhalflineasymp},
\begin{eqnarray}
& &\!\!\!G_{(0,R)}(x,y)\nonumber\\&
&=\,E^xG_{(0,R)}(X_{\eta},y)\le
E^x\left[X_{\eta}>R/2;G_{(0,\infty)}(R-X_{\eta},R-y)\right]\nonumber\\
& &\le\,cE^x\left[ X_{\eta}>R/2,|X_{\eta}-y|<1;G^1_{(0,\infty)}(R-X_{\eta},R-y)\right]\nonumber\\
& & \; + \,
cE^x\left[X_{\eta}>R/2;\left((R-X_{\eta})\vee(R-X_{\eta})^{\alpha/2}\right)\wedge
(R-y)\right]\nonumber\\
& & \le \, c(R-y)P^x(\eta<\tau_{(0,\infty)})\nonumber\\& &\;+\,cE^x\left[|X_{\eta}-y|<1;G^1_{(0,\infty)}(R-X_{\eta},R-y)\right]\nonumber\\
& &\le\, c(R-y)x/R \nonumber\\& &\;+\,
cE^x\left[|X_{\eta}-y|<1;G^1_{(0,\infty)}(R-X_{\eta},R-y)\right],\label{grintp3}
\end{eqnarray}
where the last inequality is a consequence of Proposition
\ref{MRestimate}. Moreover, by Lemma \ref{poissonKernel} we obtain
\begin{eqnarray}
& &
\!\!\!E^x\left[|X_{\eta}-y|<1;G^1_{(0,\infty)}(R-X_{\eta},R-y)\right]\nonumber\\&
& = \,
\int^{y+1}_{y-1}P_{(0,R/2)}(x,z)G^1_{(0,\infty)}(R-z,R-y)dz\nonumber\\
& &\le\,
c E^x\eta\int^{y+1}_{y-1}e^{-(z-R/2)}G^1_{(0,\infty)}(R-z,R-y)dz\nonumber\\
& &\le\, c \frac{R}{2} x e^{-(y-1-R/2)}\int^\infty_0
G^1_{(0,\infty)}(R-z,R-y)dz \nonumber\\& &\le\, c
x(R-y)/R,\label{grintp4}
\end{eqnarray}
because $y\ge 3/4R$.

Combining \pref{grintp1}, \pref{grintp2}, \pref{grintp3} and
\pref{grintp4} we obtain
$$G_{(0,R)}(x,y)\le c x(R-y)/R=cG^{gauss}_{(0,R)}(x,y).$$
\end{proof}

Now we can prove the main result of this section.
\begin{thm}
Let $R\ge 4$ and $x\le y$ then we have for $|x-y|\le 1$,
$$
G_{(0,R)}(x,y)\approx \min\{G_{(0,\infty)}(x,y),
G_{(0,\infty)}(R-x,R-y) \} $$ and for $|x-y|> 1$
$$
G_{(0,R)}(x,y)\approx \frac {(x^{\alpha/2}\vee
x)((R-y)^{\alpha/2}\vee (R-y))}R. $$
\end{thm}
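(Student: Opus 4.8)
The statement has two regimes, and in both of them the idea is to reduce the estimate on $(0,R)$ to the already-established estimates on the half-lines $(0,\infty)$ and $(-\infty,R)$ (equivalently, $G_{(0,\infty)}(R-\cdot,R-\cdot)$ after reflection). By symmetry $G_{(0,R)}(x,y)=G_{(0,R)}(R-x,R-y)$, so I may assume $x\le y$ and, when convenient, that $y\le R/2$ or handle the ``middle'' range separately. The lower bound for the half-space case (from Theorem \ref{green} and Proposition \ref{Greenintervalprop1}) and the Brownian comparison (Lemma \ref{potential_lower}, with $G^{gauss}_{(0,R)}(x,y)=x(R-y)/R$ when $x\le y$) will give all the lower bounds; the work is in the upper bounds.

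\textbf{Case $|x-y|>1$.} Here I claim $G_{(0,R)}(x,y)\approx \frac{(x^{\alpha/2}\vee x)((R-y)^{\alpha/2}\vee (R-y))}{R}$. If both $x\ge 1$ and $R-y\ge 1$ this is exactly Proposition \ref{Greenintervalprop2}, since then $x^{\alpha/2}\vee x=x$ and $(R-y)^{\alpha/2}\vee(R-y)=R-y$. The remaining subcases are when $x<1$ and/or $R-y<1$. I would treat $x<1$ by the Boundary Harnack Principle: $G_{(0,R)}(\cdot,y)$ is regular harmonic on $(0,2)$ (Lemma \ref{continuity}) and vanishes on the complement, so by Lemma \ref{BHP1} $G_{(0,R)}(x,y)\approx G_{(0,R)}(1,y)\,x^{\alpha/2}$, and $G_{(0,R)}(1,y)$ is covered either by Proposition \ref{Greenintervalprop2} (if $R-y\ge 1$) or, if also $R-y<1$, by a second application of BHP near the right endpoint, giving $G_{(0,R)}(1,y)\approx G_{(0,R)}(1,R-1)(R-y)^{\alpha/2}\approx \frac{1\cdot 1}{R}\,(R-y)^{\alpha/2}$ by Proposition \ref{Greenintervalprop2} again. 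Collecting the factors yields $G_{(0,R)}(x,y)\approx x^{\alpha/2}(R-y)^{\alpha/2}/R$, matching the formula. The symmetric subcase $x\ge 1$, $R-y<1$ is identical with the roles of the endpoints reversed.

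\textbf{Case $|x-y|\le 1$.} Here I want $G_{(0,R)}(x,y)\approx \min\{G_{(0,\infty)}(x,y),\,G_{(0,\infty)}(R-x,R-y)\}$. The upper bound by each of the two terms separately follows from domain monotonicity: $G_{(0,R)}\le G_{(0,\infty)}$ and $G_{(0,R)}(x,y)=G_{(0,R)}(R-x,R-y)\le G_{(-\infty,R)}(x,y)=G_{(0,\infty)}(R-x,R-y)$. So only the lower bound needs argument, i.e. $G_{(0,R)}(x,y)\ge c\min\{\dots\}$. Assuming $x\le y$, if $y\le R/2+1$ then the minimum is realized by (or comparable to) $G_{(0,\infty)}(x,y)$—using Remark \ref{Greenhalflineasymp} to see that the $(0,\infty)$-term, which depends essentially on $x$, is the smaller one when $x$ is in the left half—and Proposition \ref{Greenintervalprop1} gives $G_{(0,R)}(x,y)\approx G_{(0,\infty)}(x,y)$ directly. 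If $x\ge R/2-1$, reflect and use Proposition \ref{Greenintervalprop1} on $(0,R)$ reflected, i.e. the $(R-x,R-y)$ side. Since $|x-y|\le 1$ forces $x$ and $y$ into the same half (up to the overlap of width $\approx 1$ which is absorbed by constants via Harnack/BHP as above), these two cases exhaust the range.

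\textbf{Main obstacle.} The delicate point is the bookkeeping around the ``overlap'' region $x\approx R/2$ and, in the $|x-y|>1$ case, verifying that the BHP-produced constants from the two endpoints combine correctly without a hidden $R$-dependence; one must be careful that the comparison constants in Lemma \ref{BHP1} are absolute and that the anchoring values $G_{(0,R)}(1,y)$, $G_{(0,R)}(1,R-1)$ are estimated by Proposition \ref{Greenintervalprop2} (which itself already incorporates the correct $1/R$ decay). Once these anchor values are pinned down, everything else is routine monotonicity and symmetry.
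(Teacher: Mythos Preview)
Your proposal is correct and follows essentially the same route as the paper. For $|x-y|\le 1$ the paper also invokes Proposition~\ref{Greenintervalprop1} together with the symmetry $G_{(0,R)}(x,y)=G_{(0,R)}(R-x,R-y)$ and Theorem~\ref{green}; for $|x-y|>1$ the paper explicitly says to ``repeat the arguments used in the proof of Theorem~\ref{greenhlafspaceest}'', i.e.\ anchor at $\tilde x=x\vee 1$, $\tilde y=y\wedge(R-1)$ via BHP and then use Proposition~\ref{Greenintervalprop2}, which is exactly your scheme.

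One small clean-up: when you apply BHP on (the one-dimensional version of) $F=B(0,\sqrt2)\cap\H$ for $x<1$, you need $y\notin F$, whereas $|x-y|>1$ only forces $y>1$. In the residual window $1<y<\sqrt2$ simply bypass BHP and use Proposition~\ref{Greenintervalprop1} directly (both points lie in $(0,R/2+1)$ since $R\ge 4$), together with Remark~\ref{Greenhalflineasymp} to get $G_{(0,R)}(x,y)\approx G_{(0,\infty)}(x,y)\approx x^{\alpha/2}$, which matches the target since $(R-y)/R\approx 1$. The symmetric fix applies near the right endpoint.
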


\begin{proof}  Observe that by symmetry
\begin{equation}\label{symmetry}G_{(0,R)}(x,y)=G_{(0,R)}(R-x,R-y).\end{equation}
The case $|x-y|\le 1$ follows immediately from Proposition
\ref{Greenintervalprop1},
 Theorem \ref{green} and (\ref{symmetry}).

 For $0<x\le y <R$,  $|x-y|> 1$ we define $\tilde{x}= x\vee 1$ and $\tilde{y}= y \wedge (R-1)$. Then we can repeat the arguments used in the proof of Theorem \ref{greenhlafspaceest} to arrive at
   \begin{eqnarray*}G_{(0,R)}(x,y)&\approx& (1\wedge x)^{\alpha/2}(1\wedge (R-y))^{\alpha/2}G^{gauss}_{(0,R)}(\tilde{x},\tilde{y})\\
  &=& (1\wedge x)^{\alpha/2}(1\wedge (R-y))^{\alpha/2} \frac{\tilde{x}(R-\tilde{y})}R\\
  &=& \frac {(x^{\alpha/2}\vee
x)((R-y)^{\alpha/2}\vee (R-y))}R , \quad |x-y|> 1.\end{eqnarray*}
  This completes the proof.
\end{proof}


\end{document}